\theoremstyle{plain}
\newtheorem{theorem}{\bf Theorem}[section]
\newtheorem{proposition}[theorem]{\bf Proposition}
\newtheorem{lemma}[theorem]{\bf Lemma}
\theoremstyle{definition}
\newtheorem{definition}[theorem]{\bf Definition}
\newtheorem{remark}[theorem]{\bf Remark}
\newtheorem{remarks}[theorem]{\bf Remarks}
\newcommand{\N}{\mathbb N}
\newcommand{\Z}{\mathbb Z}
\newcommand{\R}{\mathbb R}
 \DeclareMathOperator{\ord}{ord}
\DeclareMathOperator{\lcm}{lcm} 
 \DeclareMathOperator{\supp}{supp}
\newcommand{\red}{{\text{\rm red}}}
\renewcommand{\t}{\, | \,}
\numberwithin{equation}{section}
\begin{document}

\title{Local and global tameness in Krull monoids}

\address{Center for Combinatorics, Nankai University, Tianjin
300071, P.R. China} \email{wdgao\_1963@aliyun.com}

\address{Institut f\"ur Mathematik und Wissenschaftliches Rechnen\\
Karl--Fran\-zens--Universit\"at Graz\\
Heinrichstra{\ss}e 36\\
8010 Graz, Austria}

\email{alfred.geroldinger@uni-graz.at}

\address{Universit{\'e} Paris 13 \\ Sorbonne Paris Cit{\'e} \\ LAGA, CNRS, UMR 7539, Universit{\'e} Paris 8 \\ F-93430, Villetaneuse \\ France} \email{schmid@math.univ-paris13.fr}

\author{Weidong Gao, Alfred Geroldinger,  and Wolfgang A. Schmid}

\thanks{This work  was  supported  by NSFC,
Project Number 10971108,  by the Austrian Science Fund FWF, Project
Number P21576-N18,  by the Austrian-French Amad{\'e}e Program FR03/2012, and  by the ANR Project Caesar, Project Number ANR-12-BS01-0011.}

\keywords{Krull monoids, non-unique factorizations, tame degree, zero-sum subsequence}

\subjclass[2010]{20M13,  13A05, 13F05, 11B75, 11B30}

\begin{abstract}
Let $H$ be a Krull monoid with finite class group $G$ such that every class contains a prime divisor. Then the global tame degree $\mathsf t (H)$ equals zero if and only if $H$ is factorial
(equivalently, $|G|=1$). If $|G| > 1$, then $\mathsf D (G) \le \mathsf t (H) \le 1 + \mathsf D (G) \big( \mathsf D (G)-1 \big)/2$, where $\mathsf D (G)$ is the Davenport constant of $G$.
We analyze the case when $\mathsf t (H)$ equals the lower bound, and we show that $\mathsf t (H)$ grows asymptotically as the upper bound, when both terms are considered as functions of the rank of $G$. We provide more precise results if $G$ is either cyclic or an elementary $2$-group.
\end{abstract}

\maketitle

\medskip \centerline{\it Dedicated to Marco Fontana on the occasion of his 65th birthday} \medskip

%%%%%%%%%%%%%%%%%%%%%%%%%%%%%%%%%%%%%%%%%%%%%%%%%%%%%%%%%%%%%%%%%%%%%%%%%
%%                                      %%%%%%%%%%%%%%%
%%%%%%%%%%%%%%%%%%%%%%%%%%%%%%%%%%%%%%%%%%%%%%%%%%%%%%%%%%%%%%%%%%%%%%%%%

\section{Introduction} \label{1}

In an atomic monoid, every non-unit can be written as a finite product of atoms (irreducible elements). The multiplicative monoid of non-zero elements
from a noetherian domain is such an atomic monoid, and furthermore it is a Krull monoid if and only if the domain is integrally closed. In a given monoid $H$,
all factorizations into atoms are unique (in other words, $H$ is factorial) if and only if $H$ is a Krull monoid with trivial class group. Otherwise,
the non-uniqueness of factorizations is described by arithmetical invariants, such as sets of lengths, catenary and tame degrees.

The concepts of local and global tameness have found some attention in recent literature, and they were studied in settings ranging from numerical monoids to noetherian domains
(confer \cite{C-G-L-P-R06,Ge-Ha08a, C-G-L09, Ge-Ka10a,Ge-Gr-Sc-Sc10, Ph10a, Ph12b, Bl-Ga-Ge12a, Om12a, Ka11a, Ph12a}).
We recall their definitions.
Let $H$ be a monoid and $u \in H$  an atom. Then the local tame degree $\mathsf t (H, u)$ is the smallest $N$ with the following property: for any multiple $a$ of $u$ and any factorization $a = v_1 \cdot \ldots \cdot v_n$ of $a$, which does not contain the $u$, there is a short subproduct which is a multiple of $u$, say $v_1 \cdot \ldots \cdot v_m$, and a refactorization of this subproduct which contains $u$, say $v_1 \cdot \ldots \cdot v_m = u u_2 \cdot \ldots \cdot u_{\ell}$, such that $\max \{\ell, m \} \le N$. Thus the local tame degree $\mathsf t (H, u)$ measures the distance between any factorization of a multiple $a$ of $u$ and a factorization of $a$ which contains the $u$.
The global tame degree $\mathsf t (H)$ is the supremum of the local tame degrees over all atoms $u \in H$, and $H$ is called (globally) tame if the global tame degree $\mathsf t (H)$ is finite.

Local tameness is a basic finiteness property in the theory of non-unique factorizations in the sense that in many settings where an arithmetical finiteness property
has to be derived, local tameness has to be proved first (confer the proof of the Structure Theorem for sets of lengths in \cite[Section 4.3]{Ge-HK06a}). Krull monoids with finite class group  are globally tame. But if the domain fails to be integrally closed, this does not remain true any more, not even for non-principal orders in number fields. Indeed,  a non-principal order $\mathfrak o$ in an algebraic number field is always  locally tame, but  it is globally tame if and only if for every prime ideal $\mathfrak p$ containing the conductor there is precisely one prime ideal $\overline{ \mathfrak p}$ in the principal order $\overline{ \mathfrak o}$ such that $\overline{ \mathfrak p} \cap \mathfrak o = \mathfrak p$ (equivalently, if and only if its elasticity is finite).  Higher dimensional analogs  will be mentioned after Definition \ref{3.1}.

The focus of the present paper is on Krull monoids $H$ with finite class group $G$ such that every class contains a prime divisor, and for simplicity suppose now that $|G|>2$. There is the straightforward inequality
\[
\mathsf D (G) \le \mathsf t (H) \le 1 + \frac{\mathsf D (G)\big( \mathsf D (G)-1 \big)}{2} \,,
\]
where $\mathsf D (G)$ is the Davenport constant of $G$. We analyze the case when $\mathsf t (H)$ equals the lower bound, and we show that $\mathsf t (H)$ grows asymptotically as the upper bound, when both terms are considered as functions of the rank of $G$ (Theorem \ref{4.11}). This  result, which indicates the general behavior of the tame degree, will be complemented by
more precise results if $G$ is either cyclic or an elementary $2$-group (Theorems \ref{5.1} and \ref{5.2}). Arithmetical invariants (such as sets of lengths, sets of distances, the elasticity, the catenary degree, or the monotone catenary degree) of a Krull monoid as above depend only on the class group $G$ but not on the number of prime divisors in the classes, and therefore all investigations can be carried through in the associated monoid of zero-sum sequences instead of doing them in $H$.  In general, this is not the case for the tame degree, and we provide the first example revealing this fact (see Theorem \ref{5.1}, but also Proposition \ref{3.3}.1 and Remark \ref{3.4}.1).  Moreover, note that the existing
computational methods (as first presented in  \cite{C-G-L-P-R06}) cannot be applied to obtain this or  many other  examples given in the present paper
(the problem is the large number of variables involved in the systems of linear Diophantine equations to be solved).

\section{Preliminaries} \label{2}

Let $\mathbb N$ denote the set of positive integers, $\mathbb P \subset \mathbb N$ the set of prime numbers and $\N_0 = \N \cup \{0\}$. For real numbers $a, b \in \mathbb R$, let $[a, b] = \{ x \in \Z \mid a \le x \le b\}$ an interval of integers. By a monoid, we mean a commutative semigroup with unit element which satisfies the cancellation laws. All our concepts will be formulated in the language of monoids. The monoids we have in mind are  multiplicative monoids of nonzero elements of noetherian or Mori domains, monoids of ideals (with suitable multiplication), and additive monoids of certain classes of modules (\cite{HK98, Ge-HK06a,  Ba-Wi13a, Fo-Ho-Lu13a}).

\medskip
\noindent
{\bf Arithmetic of monoids.} Let $H$ be a monoid. We denote by $\mathsf q (H)$ a quotient group of $H$ with $H \subset \mathsf q (H)$, by $H^{\times}$ the group of invertible elements, and by $H_{\red} = \{ a H^{\times} \mid a \in H\}$ the associated reduced monoid. We say that $H$ is reduced if $H^{\times} = \{1\}$. Furthermore, let $\mathcal A (H)$ be the set of atoms (irreducible elements) of $H$.
For a set $P$, we denote by $\mathcal F (P)$ the free (abelian) monoid with basis $P$. Then every $a \in \mathcal F (P)=F$ has a unique representation in the form \[
a = \prod_{p \in P} p^{\mathsf v_p(a) } \quad \text{with} \quad
\mathsf v_p(a) \in \N_0 \ \text{ and } \ \mathsf v_p(a) = 0 \ \text{
for almost all } \ p \in P \,.
\]
We call $\supp_P (a) = \supp (a) = \{ p \in P \mid \mathsf v_p
(a)
> 0 \} \subset P$ the {\it support} of $a$, and $|a|_F = |a| = \sum_{p\in P} \mathsf v_p (a) \in \mathbb N_0$ the {\it length} of $a$.
We will often consider submonoids of free abelian monoids, and in all these situations the length $|\cdot|$  refers to the largest free abelian monoid under consideration.
The free monoid \ $\mathsf Z (H) = \mathcal F \bigl( \mathcal
A(H_\red)\bigr)$ \ is called the \ {\it factorization monoid} \ of
$H$, and the unique homomorphism
\[
\pi \colon \mathsf Z (H) \to H_{\red} \quad \text{satisfying} \quad
\pi (u) = u \quad \text{for all} \quad u \in \mathcal A(H_\red)
\]
is called the \ {\it factorization homomorphism} \ of $H$. For $a
\in H$, the set
\[
\begin{aligned}
\mathsf Z (a)  & = \pi^{-1} (aH^\times) \subset \mathsf Z (H) \quad
\text{is the \ {\it set of factorizations} \ of \ $a$}, \quad \text{and}
\\
\mathsf L (a) & = \bigl\{ |z| \, \bigm| \, z \in \mathsf Z (a)
\bigr\} \subset \N_0 \quad \text{is the \ {\it set of lengths} \ of
$a$}  \,.
\end{aligned}
\]
By definition, we have \ $\mathsf Z(a) = \{1\}$ and $\mathsf L (a) = \{0\}$ for all $a \in
H^\times$. The monoid $H$ is called
\begin{itemize}
\smallskip
\item {\it atomic} \  if  \ $\mathsf Z(a) \ne \emptyset$ \
      for all $a \in H$;

\smallskip
\item  {\it factorial} \  if  \ $|\mathsf Z (a)| =1$ \
       for all $a \in H$ (equivalently, $H$ is atomic and every atom is a
       prime).
\end{itemize}
Let $z,\, z' \in \mathsf Z (H)$. Then we can write
\[
z = u_1 \cdot \ldots \cdot u_{\ell}v_1 \cdot \ldots \cdot v_m \quad
\text{and} \quad z' = u_1 \cdot \ldots \cdot u_{\ell}w_1 \cdot \ldots
\cdot w_n\,,
\]
where  $\ell,\,m,\, n\in \N_0$,  $u_1, \ldots, u_{\ell},\,v_1, \ldots,v_m,\,
w_1, \ldots, w_n \in \mathcal A(H_\red)$ are such that
$
\{v_1 ,\ldots, v_m \} \cap \{w_1, \ldots, w_n \} = \emptyset$.
The {\it distance} between $z$ and $z'$ is defined by
\[
\mathsf d (z, z') = \max \{m,\, n\} = \max \{ |z \gcd (z, z')^{-1}|,
|z' \gcd (z, z')^{-1}| \} \in \N_0 \,.
\]

\medskip
\noindent
{\bf Krull monoids.}
A monoid homomorphism $\varphi \colon H \to D$   is called
\begin{itemize}
\item a  {\it divisor homomorphism} if $\varphi(a)\mid\varphi(b)$ implies that $a \t b$  for all $a,b \in H$.

\smallskip
\item  {\it cofinal}  if for every $a \in D$ there exists some $u
      \in H$ such that $a \t \varphi(u)$.

\smallskip
\item  a {\it divisor theory} (for $H$) if $D = \mathcal F (P)$
for some set $P$, $\varphi$ is a divisor homomorphism, and for every
$p \in P$ (equivalently for every $p \in \mathcal{F}(P)$), there exists a finite
subset $\emptyset \ne X \subset H$ satisfying $p = \gcd \bigl(
\varphi(X) \bigr)$.
\end{itemize}
The quotient group
$\mathcal{C}(\varphi)=\mathsf q (D)/ \mathsf q (\varphi(H))$ is called the
{\it class group} of $\varphi $.
For \ $a \in \mathsf q(D)$, we denote by \ $[a] = [a]_{\varphi} = a
\,\mathsf q(\varphi(H)) \in \mathsf q (D)/ \mathsf q (\varphi(H))$ \
the class containing \ $a$.  If $\varphi \colon H \to \mathcal F (P)$ is a
cofinal divisor homomorphism, then
\[
G_P = \big\{ [p] = p \, \mathsf q (\varphi(H)) \mid p \in P \big\} \subset
\mathcal{C}(\varphi)
\]
is called the \ {\it  set of classes containing prime divisors}.  By its very definition, every glass $g \in \mathcal{C}(\varphi)$ is a subset of $\mathsf q (D)$ and $P \cap g$ is the set of prime divisors lying in $g$.
The monoid $H$ is called a {\it Krull
monoid} if it satisfies one of the following equivalent properties
(\cite[Theorem 2.4.8]{Ge-HK06a} or \cite[Chapter 22]{HK98}){\rm \,:}
\begin{itemize}
\item[(a)] $H$ is $v$-noetherian and completely integrally closed,

\item[(b)] $H$ has a divisor theory,

\item[(c)] $H$ has a  divisor homomorphism into a free monoid.
\end{itemize}
If $H$ is a Krull monoid, then a divisor theory is essentially unique and the associated class group depends only on $H$ (it is called the class group of $H$).
An integral domain $R$ is a Krull domain if
and only if its multiplicative monoid $R \setminus \{0\}$ is a Krull
monoid, and thus Property (a) shows that a noetherian domain is
Krull if and only if it is integrally closed.

The main examples of Krull monoids which we have in mind are those stemming from number theory:
rings of integers in algebraic number fields, holomorphy rings in algebraic function fields and regular congruence monoids in these domains are Krull monoids with finite class group such that every class contains infinitely many prime divisors (\cite[Section 2.11]{Ge-HK06a}). If $R$ is an integral separable finitely generated algebra over an infinite field $k$
such that $\dim_k (R) \ge 2$, then $R$ is noetherian and every class contains infinitely many prime divisors (\cite{Ka99c}).
Monoid domains and power series domains that are Krull are discussed in \cite{Ki-Pa01, Ch11a}. For the role of Krull monoids in module theory we refer to
\cite{Fa06a, F-H-K-W06, Ba-Wi13a}. Module theory provides natural examples of Krull monoids where $G_P \subsetneq G$ but $G_P = -G_P$ holds true.

\medskip
\noindent
{\bf Monoids of zero-sum sequences.} Let $G$ be an additive abelian group,  $G_0
\subset G$ \ a subset and $\mathcal F (G_0)$ the free monoid with
basis $G_0$. According to the tradition of Combinatorial Number
Theory, the elements of $\mathcal F(G_0)$ are called \ {\it
sequences} over \ $G_0$. For a sequence
\[
S = g_1 \cdot \ldots \cdot g_{\ell} = \prod_{g \in G_0} g^{\mathsf v_g
(S)} \in \mathcal F (G_0) \,,
\]
we call
\[
\begin{aligned}
\sigma (S)  & = \sum_{i = 1}^{\ell} g_i \quad \text{the \ {\it sum} \ of \ $S$} \quad \text{and} \quad
\Sigma (S)  = \big\{ \sum_{i\in I} g_i \mid \emptyset \ne I \subset [1,\ell] \big\} \quad \text{the {\it set of subsums} of} \quad S
\,.
\end{aligned}
\]
Furthermore, $S$ is called {\it zero-sum free} if $0 \notin \Sigma (S)$, and it is a {\it minimal zero-sum sequence} if $|S| \ge 1$, $\sigma (S) = 0$ and $\sum_{i \in I} g_i \ne 0$ for all $\emptyset \ne I \subsetneq [1,\ell]$.
The monoid
\[
\mathcal B(G_0) = \{ U \in \mathcal F(G_0) \mid \sigma (U) =0\}
\]
is called the \ {\it monoid of zero-sum sequences} \ over \ $G_0$.
Since the embedding $\mathcal B (G_0) \hookrightarrow \mathcal F (G_0)$ is a divisor homomorphism,    $\mathcal B  (G_0)$ is a Krull monoid by Property (c).
The monoid $\mathcal B (G)$ is factorial if and only if $|G| \le 2$.
For every arithmetical invariant \ $*(H)$ \ defined for a monoid
$H$, it is usual to  write $*(G_0)$ instead of $*(\mathcal B(G_0))$
(whenever the meaning is clear from the context). In particular, we set \ $\mathcal A (G_0) = \mathcal
A (\mathcal B (G_0))$, $\mathsf Z (G_0) = \mathsf Z (\mathcal B (G_0))$,  and $\mathsf t (G_0) = \mathsf t (\mathcal B (G_0))$.
The atoms of $\mathcal B (G_0)$ are precisely the minimal zero-sum sequences over $G_0$, and
\[
\mathsf D (G_0) = \sup \{ |U| \mid U \in \mathcal A (G_0) \} \in \mathbb N \cup \{\infty\}
\]
is the {\it Davenport constant} of $G_0$.
Suppose that $G$ is finite with $|G| > 1$, say
\[
G \cong C_{n_1} \oplus \dots \oplus C_{n_{r}} \cong C_{q_1} \oplus \dots \oplus C_{q_{s}}\,,
\]
where $r, s \in \mathbb N$, $n_1, \ldots,n_{r} \in \N$, \ $1 < n_1 \t \ldots
\t n_{r}$, and $q_1, \ldots , q_{s}$ are
prime powers (not equal to $1$). Then
$r = \mathsf r (G)$ is the {\it rank} of $G$, $s = \mathsf r^* (G)$ is the {\it total rank} of $G$, $\mathsf d (G) = \mathsf D (G)-1$ is the maximal length of a zero-sum free sequence over $G$, and we define
\[
\mathsf d^* (G) = \sum_{i=1}^{\mathsf r (G)} (n_i - 1) , \ \mathsf D^* (G) = \mathsf d^* (G)+1 , \quad \text{and} \quad
\mathsf k^* (G) = \sum_{i=1}^{\mathsf r^* (G)} \frac{q_i - 1}{q_i} .
\]
Furthermore,  we set $\mathsf  d^*(\{0\}) = \mathsf k^* (\{0\}) = 0$. A straightforward example shows that $\mathsf D^* (G) \le \mathsf D (G)$. Moreover, equality holds for groups of rank $\mathsf r (G) \le 2$, for $p$-groups, and some other types of groups but not in general (\cite{Ge09a, Ge-Li-Ph12}).
If $t \in \N$ and $(e_1, \ldots, e_t) \in G^t$, then $(e_1, \ldots, e_t)$ is said to be {\it independent} if $e_1, \ldots, e_t$ are all nonzero and if, for every $(m_1, \ldots, m_t) \in \Z^t$, the equation $\sum_{i=1}^t m_i e_i = 0$ implies that $m_i e_i = 0$ for all $i \in [1, t]$. Furthermore, $(e_1, \ldots, e_t)$ is said to be a {\it basis} of $G$ if it is independent and $G = \langle e_1, \ldots, e_t \rangle$.

\section{Tameness and Transfer Homomorphisms} \label{3}

In this section we introduce the concepts of tameness and that of transfer homomorphisms. Our main reference is Section 3.2 in \cite{Ge-HK06a}.
We present the material in a way suitable for our applications in the following sections. Among others we will show that a Krull monoid is
locally tame if and only if the associated block monoid is locally tame, a fact which has not been observed so far. Furthermore, we establish a purely combinatorial characterization of the tame degree of a Krull monoid provided that every class contains sufficiently many prime divisors (Proposition \ref{3.5}).

\medskip
\begin{definition} \label{3.1}
Let $H$ be an atomic monoid.
\begin{enumerate}
\item For $b \in H$, let $\omega (H,b)$ denote the
      smallest $N\in \N_0 \cup \{\infty\}$ with the following property{\rm \,:}
      \begin{enumerate}

      \item[]
      For all $n \in \mathbb N$ and $a_1, \ldots, a_n \in H$, if $b \t a_1 \cdot \ldots \cdot a_n$, then there is a subset $\Omega \subset [1,n]$ such that
      \[
      |\Omega| \le N \quad \text{ and} \quad b \t \prod_{\nu \in \Omega} a_{\nu} \,.
      \]
      \end{enumerate}

\item For $a \in H$ and $x \in \mathsf Z (H)$, let $\mathsf t
      (a,x) \in \N_0 \cup \{\infty\}$ denote the
      smallest $N\in \N_0 \cup \{\infty\}$ with the following property{\rm \,:}
      \begin{enumerate}
      \smallskip
      \item[] If  $\mathsf Z(a) \cap x\mathsf Z(H) \ne \emptyset$ and
              $z \in \mathsf Z(a)$, then there exists $z' \in
              \mathsf Z(a) \cap x\mathsf Z(H)$ such that  $\mathsf d (z,z') \le
              N$.
      \end{enumerate}
      For  subsets $H' \subset H$ and $X \subset \mathsf Z(H)$, we
      define
      \[
      \mathsf t (H',X) = \sup \big\{ \mathsf t (a,x) \, \big| \, a \in H',  x \in
      X\big\} \in \N_0 \cup \{\infty\} \,.
      \]
      $H$ is said to be {\it locally tame} if $\mathsf t (H, u) <
      \infty$ for all $u \in \mathcal A (H_{\red})$.

\smallskip
\item  We set
       \[
       \omega (H) = \sup \{ \omega (H, u) \mid u \in \mathcal A (H) \} \in \mathbb N_0
      \cup \{\infty\} \,,
      \]
      and we call
      \[
      \mathsf t (H) = \mathsf t (H, \mathcal A \big(H_{\red})\big) = \sup \{
      \mathsf t (H, u) \mid u \in \mathcal A (H_{\red}) \} \in \mathbb N_0
      \cup \{\infty\}
      \]
      the {\it tame degree} of $H$. The monoid  $H$ is said to be {\it $($globally$)$ tame} if $\mathsf t (H) < \infty$.
\end{enumerate}
\end{definition}

\medskip
To analyze the above terminology, suppose that $H$ is reduced. By definition, an atom $u \in H$ is prime if and only if $\omega (H,u) = 1$.
Thus $\omega (H) = 1$ if and only if $H$ is factorial, and the $\omega (H, u)$ values measure how far away an atom is from being a prime.
Let $a \in H$ and $u \in \mathcal A (H)$. If $u \nmid a$, then $\mathsf t (a, u) = 0$. Otherwise, $\mathsf t (a, u)$ is the smallest $N$ with the following property:
if $z = v_1 \cdot \ldots \cdot v_n$ is any factorization of $a$ into atoms $v_1, \ldots, v_n$, then there is a subset $\Omega \subset [1,n]$, say $\Omega = [1,m]$, and
a factorization $z' = uu_2 \cdot \ldots \cdot u_{\ell}v_{m+1} \cdot \ldots \cdot v_n$ of $a$ with atoms $u_2, \ldots, u_{\ell}$ such that $\max \{\ell, m\} \le N$. If $u$ is a prime, then every factorization of $a$ contains $u$; thus we can choose $z'=z$ in the above definition, hence $\mathsf d (z,z') = 0$ and  $\mathsf t (H, u) = 0$.  If $u$ is not a prime, then $\omega (H, u) \le \mathsf t (H,u)$, and throughout this paper we will use the following characterization of $\mathsf t (H, u)$; it is the smallest  $N \in \N_0 \cup \{\infty\}$ with the following property:
\begin{enumerate}
\item[]
If \ $m \in \N$ and $v_1, \dots , v_m \in \mathcal A(H)$ are
such that \ $u \t v_1 \cdot \ldots \cdot v_m$, but $u$ divides no
proper subproduct of $v_1\cdot \ldots \cdot v_m$, then there exist
$\ell \in \N$ and  $u_2, \ldots , u_{\ell} \in \mathcal A
(H)$ such that $v_1 \cdot \ldots \cdot v_m = u u_2 \cdot \ldots
\cdot u_{\ell}$ and \ $\max \{\ell ,\, m \} \le N$ (in other words, $\max \{1+\min \mathsf L (u^{-1}v_1\cdot \ldots\cdot v_m), m\} \le N$).
\end{enumerate}
Globally, we have that  $H$ is factorial if and only if $\mathsf t (H) = 0$, and in the non-factorial case we have $\omega (H) \le \mathsf t (H)$. Moreover,  it is not difficult to show that $H$ is tame if and only if $\omega (H) < \infty$ (\cite{Ge-Ha08a}).

\smallskip
If $H$ is $v$-noetherian, then $\omega (H, b) < \infty $ for all $b \in H$, but this need not be true for the $\mathsf t (H, u)$ values. In other words, a $v$-noetherian monoid is not necessarily locally tame. Apart from Krull monoids which will be discussed below,  main examples of locally tame monoids are C-monoids: if $R$ is a noetherian domain with integral closure $\overline R$, non-zero conductor $\mathfrak f$, finite residue field $R/ \mathfrak f$ and finite class group $\mathcal C (\overline R)$, then $R$ is a C-monoid, and there is an explicit characterization when $R$ is globally tame
(see \cite[Theorem 2.11.9]{Ge-HK06a} and \cite{Ge-Ha08b, Ka11a, Re13a}).

A central method  to investigate arithmetical phenomena in a given class of monoids $H$ (such as noetherian domains)
is to construct a simpler auxiliary monoid $B$ and a homomorphism $\theta \colon H \to B$ which will be called a transfer homomorphism and which allows to shift arithmetical properties from $B$ to $H$. The machinery of transfer homomorphisms is most highly developed for Krull monoids but  not restricted to them.
The auxiliary monoids associated to Krull monoids are monoids of zero-sum sequences over their respective class groups. We start with the necessary definitions.

A monoid homomorphism \ $\theta \colon H \to B$ is called a  {\it
transfer homomorphism}  if  the following holds:

\smallskip

\begin{enumerate}
\item[]
\begin{enumerate}
\item[{\bf (T\,1)\,}] $B = \theta(H) B^\times$ \ and \ $\theta
^{-1} (B^\times) = H^\times$.

\smallskip

\item[{\bf (T\,2)\,}] If $u \in H$, \ $b,\,c \in B$ \ and \ $\theta
(u) = bc$, then there exist \ $v,\,w \in H$ \ such that \ $u = vw$,
\ $\theta (v) \simeq b$ \ and \ $\theta (w) \simeq c$.
\end{enumerate}
\end{enumerate}
A transfer homomorphism $\theta \colon H \to B$ between atomic monoids allows a unique extension $\overline \theta \colon \mathsf Z (H) \to \mathsf Z (B)$
to the factorization monoids satisfying $\overline \theta (uH^{\times}) = \theta (u)B^{\times}$ for all $u \in \mathcal A (H)$.

\medskip
For $a \in H$ and $x \in \mathsf Z (H)$, we denote by \ $\mathsf t
(a,x,\theta)$ \ the smallest $N \in \N_0 \cup \{\infty\}$ with the
following property:

\smallskip

\begin{enumerate}
\item[]
If \ $\mathsf Z (a) \cap x \mathsf Z(H) \ne \emptyset$, \ $z \in
\mathsf Z(a)$ and $\overline\theta (z) \in \overline \theta (x)
\mathsf Z(B)$, then there exists some $z' \in \mathsf Z (a) \cap x
\mathsf Z(H)$ such that $\overline \theta (z') = \overline \theta
(z)$ and \ $\mathsf d(z,z') \le N$.
\end{enumerate}

\smallskip\noindent
Then
\[
\mathsf t(H,x,\theta) = \sup \{\mathsf t(a,x,\theta) \mid a \in H \}
\in \N_0 \cup \{\infty\}
\]
is called the {\it tame degree in the fibres}. We will make substantial use of this concept in Section \ref{5}.

\medskip
\begin{lemma} \label{3.2}
Let $H$ be a reduced Krull monoid,  $H \hookrightarrow  F=\mathcal F (P)$ a cofinal divisor homomorphism, and let $G_P \subset G = F/H$ be the set of all classes containing
prime divisors.  Let $\widetilde{\boldsymbol \beta} \colon F \to \mathcal F
(G_P)$ denote the unique homomorphism defined by
$\widetilde{\boldsymbol \beta} (p) = [p]$ for all $p \in P$. Further, let $u \in \mathcal A (H)$ and $U = \widetilde{\boldsymbol \beta} (u)$.
\begin{enumerate}
\item The homomorphism $\boldsymbol \beta = \widetilde{\boldsymbol \beta}\mid H   \colon H \to \mathcal B
      (G_P)$ is a transfer homomorphism, and hence
      \[
      \mathsf t (G_P, U) \le \mathsf t (H, u) \le \mathsf t (G_P, U) + \mathsf t (H, u, \boldsymbol \beta) \,.
      \]

\smallskip
\item $\mathsf t(H, u, \boldsymbol \beta) \le 1 + |u| \le 1 + \mathsf D (G_P)$ \ for
        all $u \in \mathcal A(H)$.

\smallskip
\item Suppose that  $G_P = - G_P$ and that every nontrivial class contains at least two distinct prime divisors.
      Then $1 + |u| = \mathsf t(H, u, \boldsymbol \beta)$ \ for
        all $u \in \mathcal A(H)$ with $|u| \ge 3$. In particular,  if $\mathsf
        D (G_P) \ge 3$, then $
        \mathsf D (G_P) + 1 = \max \{ \mathsf t(H, u, \boldsymbol \beta) \mid u \in
        \mathcal A (H) \}$.
\end{enumerate}
\end{lemma}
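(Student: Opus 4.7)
The plan breaks into three arguments, one per numbered claim.

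\textbf{Part (1).} This is essentially bookkeeping. Axiom (T1) for $\boldsymbol\beta$ holds because $H$ is reduced and every zero-sum sequence over $G_P$ lifts, prime by prime, to a zero-sum element of $F$, hence to $H$; so $\boldsymbol\beta$ is surjective onto $\mathcal B(G_P)$ and $\boldsymbol\beta^{-1}(1) = \{1\}$. For (T2), a decomposition $\boldsymbol\beta(u) = BC$ in $\mathcal B(G_P)$ is pulled back to $F$ by assigning each prime factor of $u$ to $B$ or $C$ according to its class; both resulting pieces are zero-sum in $F$ and so lie in $H$. Once $\boldsymbol\beta$ is a transfer homomorphism, the inequality $\mathsf t(G_P, U) \le \mathsf t(H, u) \le \mathsf t(G_P, U) + \mathsf t(H, u, \boldsymbol\beta)$ is the standard splitting of the tame degree across a transfer homomorphism.

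\textbf{Part (2).} Given $z = v_1 \cdots v_n \in \mathsf Z(a)$ with $\overline{\boldsymbol\beta}(v_1) = U$ after re-indexing, I would write $u = p_1 \cdots p_k$ and $v_1 = q_1 \cdots q_k$ in $F$ with $[p_i] = [q_i]$ for each $i$, paired so that the set $S = \{ i : p_i \ne q_i \}$ has size $|S| \le k = |u|$. Since $u \mid a$ in $F$, each $p_i$ with $i \in S$ occurs in some $v_{j(i)}$ with $j(i) \ne 1$. For each such $i$, perform the in-class swap exchanging this copy of $p_i$ with the copy of $q_i$ in $v_1$; because $[p_i] = [q_i]$, the $\overline{\boldsymbol\beta}$-image of every atom involved is preserved, so each modified element is still an atom of $H$. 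After all swaps, $v_1$ has become $u$ and only the atoms indexed by $\{ j(i) : i \in S \}$ (of size $\le |S|$) are altered; the resulting $z' \in \mathsf Z(a) \cap u \mathsf Z(H)$ satisfies $\overline{\boldsymbol\beta}(z') = \overline{\boldsymbol\beta}(z)$ and $\mathsf d(z, z') \le 1 + |S| \le 1 + |u| \le 1 + \mathsf D(G_P)$.

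\textbf{Part (3).} For the reverse inequality, I would exhibit explicit $a$ and $z \in \mathsf Z(a)$ forcing every valid $z'$ to differ from $z$ in at least $1 + |u|$ atoms. Fix $u = p_1 \cdots p_k$ with $k \ge 3$; since $u$ is an atom of length $\ge 2$, every class $g_i = [p_i]$ is nontrivial, so the hypothesis provides a second prime $p_i' \ne p_i$ in class $g_i$. Using $G_P = -G_P$, choose primes $q_i \in P$ with $[q_i] = -g_i$, selected in sufficiently generic position with respect to the $p_j$'s and $p_j'$'s. Set $v_1 = p_1' \cdots p_k'$ (an atom of $H$ because its image $U$ is a minimal zero-sum sequence) and $v_{i+1} = p_i q_i$ for $i \in [1, k]$ (atoms because their images $g_i(-g_i)$ are minimal zero-sum sequences). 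Let $a = v_1 v_2 \cdots v_{k+1}$ and $z = v_1 v_2 \cdots v_{k+1}$; then $a/u = p_1' \cdots p_k' \cdot q_1 \cdots q_k \in H$, and $\overline{\boldsymbol\beta}(z) = U \cdot \prod_{i=1}^k g_i(-g_i)$ has $U$ as a factor. Any $z' \in \mathsf Z(a) \cap u \mathsf Z(H)$ with $\overline{\boldsymbol\beta}(z') = \overline{\boldsymbol\beta}(z)$ consists of $u$ together with $k$ length-two atoms whose product equals $a/u$; matching primes class by class in $a/u$ forces each such atom to draw its class-$g_i$ factor from the $p_j'$'s (not the $p_j$'s), which makes it distinct from every $v_{j+1} = p_j q_j$. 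Hence $z$ and $z'$ share no atoms, $\mathsf d(z, z') = 1 + k$, and $\mathsf t(H, u, \boldsymbol\beta) \ge 1 + |u|$. Combined with Part (2) this yields equality, and taking $|u| = \mathsf D(G_P)$ gives the ``in particular'' statement.

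\textbf{Main obstacle.} The technical heart of the argument is Part (3): one must rule out every permutation of the target image atoms, combined with every reassignment of primes within a class, that could make some non-$u$ atom of $z'$ accidentally coincide with some $v_{j+1}$. This matching problem is particularly delicate when several of the classes $g_i$ coincide or when $-g_i = g_j$ for some $i, j$, since then primes may be reshuffled in more ways. The hypotheses $G_P = -G_P$ and two primes per nontrivial class are exactly what permit the generic choice of the auxiliary primes $p_i'$ and $q_i$ that rules out such coincidences; once this distinctness is in place, the verification reduces to bookkeeping with prime multiplicities within each class.
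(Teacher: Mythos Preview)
The paper itself does not give a proof of this lemma; it simply cites \cite[Theorem 3.2.5 and Proposition 3.4.8]{Ge-HK06a} for Part~1 and \cite[Proposition 4.2]{Ge-Gr09b} for Parts~2 and~3. So there is no argument in the paper to compare against, and your sketches for Parts~1 and~2 are along the standard lines and essentially correct (for Part~2 it is cleanest to phrase the swap via $d=\gcd_F(u,v_1)$: writing $u=du'$, $v_1=dv_1'$ with $\gcd_F(u',v_1')=1$, one checks $u'\mid v_2\cdots v_n$ in $F$, so the at most $|u'|\le|u|$ primes of $u'$ can be swapped into $v_1$ while touching at most that many of the other $v_j$'s).

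Part~3, however, has a genuine gap. Your claim that in $u^{-1}a=(\prod p_j')(\prod q_j)$ each length-two atom of $z'$ must take its class-$g_i$ factor ``from the $p_j'$'s (not the $p_j$'s), which makes it distinct from every $v_{j+1}=p_jq_j$'' breaks down exactly in the case you flag as delicate. Suppose $g_1=g_2=g$ and the class $g$ contains precisely the two primes $p,p'$, with $u$ using both, say $p_1=p$, $p_2=p'$. Then the only possible choices are $p_1'=p'$ and $p_2'=p$, so $\{p_1',p_2'\}=\{p_1,p_2\}$ and $u^{-1}a$ contains both $p$ and $p'$. Taking $k=3$, $g_3=-2g$, one computes $u^{-1}a=p\,p'\,p_3'\,q_1q_2q_3$, and the choice
\[
z'=u\cdot(p\,q_1)\cdot(p'\,q_2)\cdot(p_3'\,q_3)
\]
is a valid factorization with $\overline{\boldsymbol\beta}(z')=\overline{\boldsymbol\beta}(z)$; here $p\,q_1=v_2$ and $p'\,q_2=v_3$, so $\mathsf d(z,z')=2<4=1+|u|$. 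No ``generic choice'' of the $q_i$'s rescues this: whatever $q_1,q_2$ you pick in class $-g$, the adversary can match the pairing. The appeal to ``two primes per class permit the generic choice'' is therefore incorrect---with only two primes available and $u$ already occupying both, there is no room left for genericity. The construction in the cited reference must proceed differently in this situation; your sketch does not cover it.
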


\begin{proof}
See \cite[Theorem 3.2.5 and Proposition 3.4.8]{Ge-HK06a} for 1., and
\cite[Proposition 4.2]{Ge-Gr09b} for 2. and 3.
\end{proof}

For any Krull monoid $H$, we denote by $\widetilde{\boldsymbol \beta} \colon F \to \mathcal F
(G_P)$, by $\boldsymbol \beta \colon H \to \mathcal B (G_P)$, and by $\overline{\boldsymbol \beta} \colon \mathsf Z (H) \to \mathsf Z (G_P)$ the homomorphisms as defined above.

\medskip
\begin{proposition} \label{3.3}
Let $H$ be a reduced Krull monoid, $H \hookrightarrow F= \mathcal F(P)$ a cofinal divisor homomorphism,  $G
= F/H$, and $G_P \subset G$ the set of all classes containing
prime divisors such that that $\mathsf D (G_P) \ge 2$.
\begin{enumerate}
\item  $H$ is locally tame if and only if $\mathcal B
       (G_P)$ is locally tame. More precisely, we have
       \[
       \mathsf{t}(G_P, U) \le
       \mathsf{t}(H, u) \le \mathsf{t} (G_P, U) +  |U| + 1 \quad \text{for every} \quad u \in \mathcal A (H) \,.
       \]

\smallskip
\item $\mathsf t (H, u) \le  \max \left\{ \mathsf t (G_P, U), \frac{3+(|u|-1)(\mathsf D (G_P)-1)}{2}  \right\} \le 1 + \frac{|u|(\mathsf D (G_P)-1)}{2}$ \ for every $u \in \mathcal A (H)$.

\smallskip
\item $\mathsf t (G_P) \le \mathsf t (H) \le  \max \left\{ \mathsf t (G_P), \frac{3+(\mathsf D (G_P)-1)^2}{2}  \right\}  \le 1 + \frac{\mathsf D (G_P)(\mathsf D (G_P)-1)}{2}$, and if $\mathsf t (H) > \mathsf D (G_P)$, then
    \[
    \mathsf t (H) \le \max \{ 1+ \min \mathsf L (W) \mid W \in \mathcal B (G_P\setminus \{0\}), |W| \le \mathsf D (G_P) \big( \mathsf D (G_P)-1 \big) \} \,.
    \]

\smallskip
\item Suppose that  $G_P = -G_P$. If $U \in \mathcal A (G_P)$ with $|U| \ge 3$, then $\mathsf t (G_P, U) \ge |U|$. In particular, if  $\mathsf D (G_P) \ge 3$, then $\mathsf t (G_P) \ge \mathsf D (G_P)$.
\end{enumerate}
\end{proposition}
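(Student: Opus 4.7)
The plan is to lean on Lemma~\ref{3.2} as the main engine, supplementing it with length bookkeeping for Parts 2 and 3 and with an explicit construction for Part 4. For \textbf{Part 1}, both inequalities are immediate consequences of that lemma: $\mathsf{t}(G_P, U) \le \mathsf{t}(H, u)$ is the general transfer-homomorphism estimate recorded in Lemma~\ref{3.2}.1, and the upper bound follows by stringing Lemma~\ref{3.2}.1 with Lemma~\ref{3.2}.2 to get $\mathsf{t}(H, u) \le \mathsf{t}(G_P, U) + \mathsf{t}(H, u, \boldsymbol{\beta}) \le \mathsf{t}(G_P, U) + 1 + |u| = \mathsf{t}(G_P, U) + |U| + 1$, noting $|u| = |U|$ because $\widetilde{\boldsymbol{\beta}}$ maps each $p \in P$ to the single class $[p] \in G_P$ and so preserves length.

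For \textbf{Parts 2 and 3}, I would work with the reformulation of $\mathsf{t}(H, u)$ for non-prime $u$ recalled in the text: it is the smallest $N$ such that whenever $u \mid v_1 \cdots v_m$ in atoms of $H$ with $u$ dividing no proper subproduct, one can refactor $v_1 \cdots v_m = u u_2 \cdots u_\ell$ with $\max\{m, \ell\} \le N$. Two basic observations drive the estimate. First, each $v_i$ must carry at least one prime factor of $u$ in $F$, so $m \le |u| \le \mathsf{D}(G_P)$. Second, since $|u| \ge 2$ forces (by minimality of the associated zero-sum block) $u$ to have no class-$0$ prime factors, neither any $v_i$ nor any $u_j$ can be a prime of $H$, so every such atom has $F$-length at least two; hence $\ell - 1 = \min \mathsf{L}(u^{-1} v_1 \cdots v_m) \le (|v_1 \cdots v_m| - |u|)/2 \le (m \mathsf{D}(G_P) - |u|)/2$. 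Splitting on $m = |u|$ versus $m \le |u|-1$: the second subcase yields $\max\{m, \ell\} \le (3+(|u|-1)(\mathsf{D}(G_P)-1))/2$ by direct arithmetic, while in the first subcase the rigid bijective correspondence between the $v_i$ and the prime factors of $u$ allows one to descend the refactorization problem to the analogous one for $U$ in $\mathcal{B}(G_P)$ — through the fibre analysis of Lemma~\ref{3.2} — yielding $\max\{m, \ell\} \le \mathsf{t}(G_P, U)$. The second inequality in Part 2 and the crude bound $1 + \mathsf{D}(G_P)(\mathsf{D}(G_P)-1)/2$ of Part 3 are elementary arithmetic; the refined bound in Part 3 comes from setting $W = \widetilde{\boldsymbol{\beta}}(u^{-1} v_1 \cdots v_m) \in \mathcal{B}(G_P \setminus \{0\})$ and observing $|W| \le |v_1 \cdots v_m| \le \mathsf{D}(G_P)(\mathsf{D}(G_P)-1)$ in the regime $\mathsf{t}(H) > \mathsf{D}(G_P)$.

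For \textbf{Part 4}, I construct an explicit witness. Given $U = g_1 \cdots g_k \in \mathcal{A}(G_P)$ with $k = |U| \ge 3$, the hypothesis $G_P = -G_P$ places $V = (-g_1)(-g_2) \cdots (-g_k)$ in $\mathcal{F}(G_P)$, and $V$ is a minimal zero-sum sequence since every proper nonempty subsum of $V$ is the negative of a proper nonempty subsum of $U$. Consider
\[
B = UV \in \mathcal{B}(G_P), \qquad z = U \cdot V, \qquad z' = \prod_{i=1}^{k} \bigl( g_i (-g_i) \bigr) \,.
\]
Both $z$ and $z'$ are factorizations of $B$; since $|U| = |V| = k \ge 3$ while each $g_i(-g_i)$ has length $2$, neither $U$ nor $V$ coincides with any $g_i(-g_i)$, so $\gcd(z, z') = 1$ and $\mathsf{d}(z, z') = \max\{2, k\} = k$. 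Because $V$ is itself an atom, $z$ is the unique factorization of $B$ containing $U$, and therefore $\mathsf{t}(G_P, U) \ge \mathsf{d}(z, z') = |U|$. The ``in particular'' claim follows by choosing $U$ with $|U| = \mathsf{D}(G_P) \ge 3$. The main difficulty I anticipate lies in the subcase $m = |u|$ of Part 2: the ``no proper subproduct'' condition in $F$ does not transfer automatically to $\mathcal{B}(G_P)$, so its reduction to $\mathsf{t}(G_P, U)$ must genuinely exploit the rigid correspondence between the $v_i$ and the prime factors of $u$, together with the fibre estimate supplied by Lemma~\ref{3.2}.
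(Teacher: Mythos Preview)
Your treatment of Parts 1, 3, and 4 is correct and essentially coincides with the paper's own argument (the paper also derives Part 1 straight from Lemma~\ref{3.2}, proves the refined bound in Part 3 by the same length bookkeeping on $W=\boldsymbol\beta(u^{-1}v_1\cdots v_m)$, and uses the same configuration $UV=\prod_i g_i(-g_i)$ for Part~4). One small slip in Part~3: your chain $|W|\le |v_1\cdots v_m|\le \mathsf D(G_P)(\mathsf D(G_P)-1)$ is not right as written, since $|v_1\cdots v_m|\le m\,\mathsf D(G_P)\le \mathsf D(G_P)^2$. The correct estimate, which the paper uses, is
\[
|W|=|v_1\cdots v_m|-|u|\le |u|\bigl(\mathsf D(G_P)-1\bigr)\le \mathsf D(G_P)\bigl(\mathsf D(G_P)-1\bigr).
\]

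For Part~2 the paper simply invokes \cite[Theorem~3.4.10.6]{Ge-HK06a}, so there is nothing to compare against directly; but your attempted direct proof has a genuine gap in the subcase $m=|u|$. Your claim there is that $\max\{m,\ell\}\le \mathsf t(G_P,U)$, to be obtained by descending to $\mathcal B(G_P)$ and using the characterisation of $\mathsf t(G_P,U)$. The problem is exactly the one you flag but do not resolve: from ``$u$ divides no proper subproduct of $v_1\cdots v_m$'' in $H$ one cannot conclude that $U$ divides no proper subproduct of $V_1\cdots V_m$ in $\mathcal B(G_P)$, because distinct primes of $F$ lying in the same class collapse under $\widetilde{\boldsymbol\beta}$. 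Concretely, if some $g_j$ with $j>m'$ already occurs inside one of the $A_i$ for $i\le m'$, then $U\mid V_1\cdots V_{m'}$ in $\mathcal F(G_P)$ even though $u\nmid v_1\cdots v_{m'}$ in $F$. In that situation the defining inequality for $\mathsf t(G_P,U)$ only controls a shorter subproduct, and the leftover factors $V_i$ for $i\notin\Omega$ re-enter the count, so you do not get $\ell\le \mathsf t(G_P,U)$. The ``fibre analysis'' of Lemma~\ref{3.2} does not help here either: the fibre tame degree $\mathsf t(H,u,\boldsymbol\beta)$ compares two factorizations whose images in $\mathsf Z(G_P)$ \emph{coincide}, whereas what you need is to move from $V_1\cdots V_m$ to a factorization in $U\,\mathsf Z(G_P)$, which is governed by $\mathsf t(G_P,U)$ itself and brings you back to the same obstruction. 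So as it stands your Part~2 establishes only the weaker bound $\mathsf t(H,u)\le 1+|u|(\mathsf D(G_P)-1)/2$; the sharper $\max$-bound involving $\mathsf t(G_P,U)$ still requires the argument from \cite{Ge-HK06a}.
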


\begin{proof}
1. The inequalities follow immediately from Items 1. and 2. of Lemma \ref{3.2}, and by the very definition the inequalities show
that $H$ is locally tame if and only if $\mathcal B (G_P)$ is locally tame.

\smallskip
2.  See \cite[Theorem 3.4.10.6]{Ge-HK06a}.

\smallskip
3. The first inequalities  follow immediately from 1. and 2. Suppose that $\mathsf t (H) > \mathsf D (G_P)$.
If $\mathsf t (H)$ is infinite, then $\mathsf D (G_P)$ is necessarily also infinite and the last inequality is clearly true. So, suppose $\mathsf t (H)$ is finite. There are atoms $u, u_2, \ldots u_{\ell}, v_1, \ldots, v_m$ such that $u \t v_1 \cdot \ldots \cdot v_m$, but $u$ divides no proper  subproduct, and $\mathsf t (H) = \mathsf t (H, u) = \max \{\ell, m\}$. Since $m \le |u| \le \mathsf D (G_P)$, it follows that $\mathsf t (H, u) = \ell = 1 + \min \mathsf L (w)$ with $w = u^{-1} v_1 \cdot \ldots \cdot v_m$. Since
\[
|w| = |v_1 \cdot \ldots \cdot v_m| - |u| \le |u| \mathsf D (G_P) - |u| \le  \mathsf D (G_P) \big( \mathsf D (G_P)-1 \big) \,,
\]
the assertion follows.

\smallskip
4. Suppose that $U = g_1 \cdot \ldots \cdot g_m$ with $m \ge 3$, and set $V_i = (-g_i)g_i$ for all $i \in [1,m]$. Then
$U \t V_1 \cdot \ldots \cdot V_m$, but $U$ divides no proper subproduct. Since $U (-U) = V_1 \cdot \ldots \cdot V_m$, it follows that $\mathsf t (G_P, U) \ge \max \{2, m\} = m = |U|$. The statement on $\mathsf t (G_P)$ is an immediate consequence.
\end{proof}

\medskip
\begin{remarks} \label{3.4}
1.  Proposition \ref{3.3} shows that the property whether $H$ is locally tame or not depends only on $G_P$. This is not true for global tameness.
We argue as follows. By \cite[Example 4.13]{Ge-Ka10a}, there is a tame Krull monoid $H'$ with class group $G'$, set of prime divisors $G_P' \subset G'$ such that $D(G_P') = \infty$. Since $H'$ is tame, $\mathcal B (G_P')$ is tame by Proposition \ref{3.3}.3. By a Realization Theorem for Krull monoids (\cite[Theorem 2.5.4]{Ge-HK06a}), there is a Krull monoid $H$ with class group $G$, set of prime divisors $G_P \subset G$, and an isomorphism $\Phi \colon G \to G'$ with $\Phi (G_P) = G_P'$ such that every class in $G_P$ contains at least two distinct prime divisors.
Then \cite[Theorem 4.2]{Ge-Ka10a} implies that $H$ is not tame, but $\mathcal B (G_P) \cong \mathcal B (G_P')$ is tame.

\smallskip
2. Statement 3 of Proposition \ref{3.3} shows that the finiteness of the Davenport constant implies that $H$ is globally tame and hence locally tame. Note, if $G_P$ is
finite, then $\mathsf D (G_P)$ is finite, and  the converse holds if $G$ has finite total rank (\cite[Theorem 3.4.2]{Ge-HK06a}).
Moreover, if $G_P = G$, then $G$ is finite if and only if $\mathsf D (G)$ is finite if and only if $H$ is locally tame if and only if $H$ is tame (\cite[Theorem 4.4]{Ge-Ha08a}).
\end{remarks}

\smallskip
As mentioned above, we can have that $\mathsf t (G_P) < \infty = \mathsf t (H)$. We will also give an example of a finite abelian class group such that $\mathsf t (G) < \mathsf t (H)$ (see Theorem \ref{5.1}). Thus in general the tame degree $\mathsf t (H)$ does not coincide with the tame degree of the associated monoid of zero-sum sequences. However, if every class contains sufficiently many prime divisors, then the following proposition offers a characterization of $\mathsf t (H)$ in terms of zero-sum theory. This opens the door to study the arithmetical invariant $\mathsf t (H)$ with methods from  Combinatorial and Additive  Number Theory.

\medskip
\begin{proposition} \label{3.5}
Let $H$ be a Krull monoid with class group $G$  and let $G_P \subset G$ denote the set of classes containing prime divisors. Suppose that $G_P = - G_P$ and that $2 < \mathsf D (G_P) < \infty$.
\begin{enumerate}
\item Let $u \in \mathcal A (H)$  and  $U = \boldsymbol \beta (u)$ with $|U| \ge 3$. If every nontrivial class contains at least  $|U|+1$ distinct prime divisors, then
      \[
      \begin{aligned}
      \mathsf t (H, u) = \max \{|U|, 1+ & \min \mathsf L (A_1 \cdot \ldots \cdot A_m)  \mid  m \in \N, U = S_1 \cdot \ldots \cdot S_m \ \text{and, for all} \ i \in [1,m] , \\
                        &  \ S_i, A_i \in \mathcal F (G_P) \setminus \{1\} \ \text{with} \ S_i A_i \in \mathcal A (G_P) \} \,.
      \end{aligned}
      \]

\smallskip
\item If every nontrivial class contains at least $\mathsf D (G_P)+1$ distinct prime divisors, then
      \[
      \begin{aligned}
      \mathsf t (H) = \max \{ \mathsf D (G_P) , 1+ & \min \mathsf L (A_1 \cdot \ldots \cdot A_m)  \mid  m \in \N, A_1, \ldots, A_m \in \mathcal F (G_P) \setminus \{1\} \\
       & \text{are zero-sum free such that } \ \sigma (A_1) \cdot \ldots \cdot \sigma (A_m) \in \mathcal A (G_P) \} \,.
      \end{aligned}
      \]
      \end{enumerate}
\end{proposition}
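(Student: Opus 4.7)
My plan is to reduce both parts to the combinatorial characterization of $\mathsf t(H,u)$ recalled after Definition~\ref{3.1}: since $|U|\ge 3$ forces $u$ to not be a prime of $H$, $\mathsf t(H,u)$ equals the supremum, over all tuples $(v_1,\ldots,v_m)$ of atoms of $H$ with $u\t v_1\cdots v_m$ but $u$ dividing no proper subproduct, of $\max\{m,\,1+\min\mathsf L(u^{-1}v_1\cdots v_m)\}$. Since a supremum of maxima equals the maximum of suprema, I would split this into two separate suprema, one over $m$ and one over $1+\min\mathsf L$, and analyze each using $G_P=-G_P$ together with the sharp hypothesis on the number of prime divisors per class.

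For the sup over $m$: every $v_i$ must share a prime factor with $u$ in $F=\mathcal F(P)$ (else one could drop it and still have $u$ dividing the remaining product), so $m\le|u|=|U|$. Conversely, writing $U=g_1\cdots g_{|U|}$ (all $g_i\ne 0$ since $|U|\ge 3$) and taking $V_i=g_i(-g_i)$ as in the proof of Proposition~\ref{3.3}.4, one can lift each $V_i$ to an atom $v_i\in H$ by sending the $g_i$-slot to the $i$-th prime of $u$ in class $g_i$ and the $(-g_i)$-slot to a prime of $P$ in class $-g_i$, realizing $m=|U|$.

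The heart of the proof is the correspondence between such tuples and partition data $(S_1,\ldots,S_m;A_1,\ldots,A_m)$ with $U=S_1\cdots S_m$, each $S_i,A_i\ne 1$ and $S_iA_i\in\mathcal A(G_P)$. Starting from a tuple $(v_i)$, I would fix in $F$ an assignment of the prime factors of $u$ to the $v_i$'s witnessing $u\t v_1\cdots v_m$, which splits $v_i=s_ia_i$ with $s_1\cdots s_m=u$ and (by minimality) each $s_i\ne 1$; then $S_i:=\widetilde{\boldsymbol\beta}(s_i)$ and $A_i:=\widetilde{\boldsymbol\beta}(a_i)$ are nontrivial and $S_iA_i=\boldsymbol\beta(v_i)\in\mathcal A(G_P)$. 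Conversely, from partition data I lift each $S_i$ using exactly the prime factors of $u$ in the required classes, and for each element $g$ of each $A_i$ pick a prime in $[g]\cap P$ outside $\supp(u)$: such a prime exists because $A_i$ is zero-sum free, so $g$ lies in a nontrivial class, which by hypothesis contains at least $|U|+1$ prime divisors, more than the at-most-$|U|$ elements of $\supp(u)$. The resulting $v_i=s_ia_i$ are atoms of $H$ (since $\boldsymbol\beta(v_i)=S_iA_i$ is an atom and $\boldsymbol\beta$ is a transfer homomorphism), $u\t v_1\cdots v_m$ holds because $s_1\cdots s_m=u$, and no proper subproduct is divisible by $u$ because each $a_i$ is coprime to $u$ in $F$ and every $s_j\ne 1$. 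Finally, since $\boldsymbol\beta$ is a transfer homomorphism, $\min\mathsf L(u^{-1}v_1\cdots v_m)=\min\mathsf L(A_1\cdots A_m)$, proving Part~1.

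Part~2 follows by applying $\sup_{u\in\mathcal A(H)}$ to Part~1: the $|U|$-term becomes $\mathsf D(G_P)$, while the other sup ranges over all atoms $U\in\mathcal A(G_P)$ with their partition data. In one direction, zero-sum freeness of each $A_i$ (any zero-sum subsequence of $A_i$ would be a proper zero-sum subsequence of the atom $S_iA_i$) together with atomicity of $U$ forces $\sigma(A_1)\cdots\sigma(A_m)\in\mathcal A(G_P)$, because a proper relation $\sum_{i\in I}\sigma(A_i)=0$ would pull back to the proper zero-sum subsequence $\prod_{i\in I}S_i$ of $U$. Conversely, given zero-sum-free $A_1,\ldots,A_m$ with $\sigma(A_1)\cdots\sigma(A_m)\in\mathcal A(G_P)$, taking the length-$1$ sequences $S_i=(-\sigma(A_i))\in G_P$ (well-defined by $G_P=-G_P$) makes each $S_iA_i$ an atom (a proper zero-sum subsequence would force a proper subsequence of $A_i$ to have sum $\sigma(A_i)$, contradicting zero-sum freeness) and makes $U=S_1\cdots S_m$ an atom by reversing the previous argument. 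The main obstacle is the lifting step in Part~1: the sharp $|U|+1$ lower bound on prime divisors per class is exactly what is needed to pick primes for the $A_i$'s outside $\supp(u)$ while $s_1\cdots s_m$ reconstructs $u$, which together enforce the \emph{no proper subproduct} minimality.
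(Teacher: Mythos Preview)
Your proof is correct and follows essentially the same approach as the paper: the same two-sided comparison via decomposing and lifting tuples $(v_i)\leftrightarrow(S_i,A_i)$ for Part~1 (using the $|U|+1$ hypothesis exactly as the paper does, to pick primes for the $a_i$ outside $\supp_P(u)$), and the same reduction of Part~2 to Part~1 via $S_i=(-\sigma(A_i))$. The only cosmetic difference is your explicit use of the identity $\sup_x\max\{f(x),g(x)\}=\max\{\sup_x f(x),\sup_x g(x)\}$ to separate the contribution of $m$ from that of $1+\min\mathsf L$, whereas the paper argues the two inequalities $\mathsf t(H,u)\le t'$ and $t'\le\mathsf t(H,u)$ directly by distinguishing the cases $\ell\le m$ and $\ell>m$.
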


\begin{proof}
We may suppose that $H$ is reduced, and we consider a divisor theory $H \hookrightarrow F = \mathcal F (P)$.

1. First note that
\[
\sigma (A_1 \cdot \ldots \cdot A_m) = \sigma (A_1)+\ldots+\sigma (A_m)=-\sigma (U) =0 \,,
\]
hence $A_1\cdot \ldots \cdot A_m \in \mathcal B (G_P)$. Since $S_iA_i \in \mathcal A (G_P)$ and $S_i \ne 1$, it follows that $|A_i| \le \mathsf D (G_P)-1$ for all $i \in [1,m]$,
and hence $|A_1 \cdot \ldots \cdot A_m| \le m (\mathsf D (G_P)-1) \le |U| (\mathsf D (G_P)-1)$. Thus we get
\[
\min \mathsf L (A_1 \cdot \ldots \cdot A_m) \le |A_1\cdot \ldots\cdot A_m|/ 2 \le  (\mathsf D (G_P)-1)\mathsf D (G_P)/2 \,.
\]
Thus the set $\{|U|, 1+\min \mathsf L (\cdot) \mid \ldots \}$ is finite, and we denote by $t'$ its maximum.

First we show that $\mathsf t (H,u) \le t'$. Let $a \in H$ with $\mathsf t (H,u) = \mathsf t (a,u)$. Let $\ell, m \in \N$, $v_1, \ldots, v_m, u_2, \ldots, u_{\ell} \in \mathcal A (H)$ such that $u \t v_1 \cdot \ldots \cdot v_m$, but $u$ divides no proper subproduct, $v_1 \cdot \ldots \cdot v_m = uu_2 \cdot \ldots \cdot u_{\ell}$ and $\max \{\ell, m \} = \mathsf t (a, u)$.
If $\ell \le m$, then $\mathsf t (a,u) = m \le |U| \le t'$. Suppose that $\ell > m$. Then $\mathsf t (a,u) = \ell$ and $\ell-1=\min \mathsf L (u^{-1}a)$. Since $u$ divides $v_1 \cdot \ldots \cdot v_m$ but no proper subproduct, there are $s_1, \ldots, s_m, a_1, \ldots, a_m \in F \setminus \{1\}$ such that $u = s_1\cdot \ldots \cdot s_m$, and $v_i=s_ia_i$ for all $i \in [1,m]$.
Setting $S_i = \boldsymbol \beta (s_i)$ and $A_i = \boldsymbol \beta (a_i)$ for all $i \in [1,m]$ we obtain that
\[
\begin{aligned}
\mathsf t (H,u) & = \ell = 1 + \min  \mathsf L (u^{-1}a) = 1 + \min \mathsf L \big( \boldsymbol \beta (u^{-1}a)  \big)  = 1 + \min \mathsf L (A_1 \cdot \ldots \cdot A_m) \le t' \,.
\end{aligned}
\]

Next we show that $t' \le \mathsf t (H,u)$. If $t' = |U|$, then the statement follows from Proposition \ref{3.3}. Suppose that $t' > |U|$, and let $S_1, A_1, \ldots, S_m, A_m$ be as in the definition of $t'$ such that $t' = 1 + \min \mathsf L (A_1 \cdot \ldots \cdot A_m)$. There are  $s_1, \ldots, s_m \in F \setminus \{1\}$ such that $u = s_1 \cdot \ldots \cdot s_m$ and $S_i = \boldsymbol \beta (s_i)$ for all $i \in [1,m]$. Set $\gamma = |A_1 \cdot \ldots \cdot A_m|$. Since
every class contains at least $|U|+1$ distinct prime divisors, there are primes $p_1, \ldots, p_{\gamma} \in P \setminus \supp_P(u)$ and elements $a_1, \ldots, a_m \in F$ such that
$a_1 \cdot \ldots \cdot a_m = p_1 \cdot \ldots \cdot p_{\gamma}$, $\gcd_F (u, a_1 \cdot \ldots \cdot a_m) = 1$, and $\boldsymbol \beta (a_i) = A_i$ for all $i \in [1,m]$.
Now we define $v_i = s_i a_i$ for all $i \in [1,m]$, and observe that $v_1, \ldots , v_m \in \mathcal A (H)$. By construction, $u \t v_1 \cdot \ldots \cdot v_m$, but $u$ does not divide any proper subproduct. Let $u_2, \ldots, u_{\ell} \in \mathcal A (H)$ such that $\max \{\ell, m\} \le \mathsf t (a,u)$. Then
\[
\ell \ge 1 + \min \mathsf L (u^{-1}a) = 1 + \min \mathsf L (A_1 \cdot \ldots \cdot A_m) = t' > |U| \ge m \,,
\]
and hence
\[
\mathsf t (H,u) \ge \mathsf t (a,u) = \max \{\ell, m\} = \ell \ge t' \,.
\]

\smallskip
2. Let $t'$ denote the maximum on the right hand side. First we show that for all $u \in \mathcal A (H)$, we have $\mathsf t (H, u) \le t'$. We choose $u \in \mathcal A (H)$ and use the equation for $\mathsf t (H, u)$ derived in 1. Clearly, we have $|u| \le \mathsf D (G_P) \le t'$. Let $S_1, A_1, \ldots, S_m, A_m $ be as in 1. Since $U = S_1 \cdot \ldots \cdot S_m \in \mathcal A (G_P)$, it follows that $U' = \sigma (S_1) \cdot \ldots \cdot \sigma (S_m) \in \mathcal A (G_P)$ and hence $-U' = \sigma (A_1) \cdot \ldots \cdot \sigma (A_m) \in \mathcal A (G_P)$. Thus all the assumptions on $A_1, \ldots , A_m$ of 2. are satisfied, and thus $\mathsf t (H,u) \le t'$.

Conversely, we show that $t' \le \mathsf t (H)$. If $t'=\mathsf D (G_P)$, then $t' = \mathsf D (G_P) \le \mathsf t (H,u)$ for some $u \in \mathcal A (H)$ with $|u|= \mathsf D (G_P)$. Suppose that $t' = 1 + \min \mathsf L (A_1 \cdot \ldots \cdot A_m)$ with $A_1, \ldots , A_m$ be as in 2. For $i \in [1,m]$, we define $S_i = - \sigma (A_i)$, and we set $U = S_1 \cdot \ldots \cdot S_m$.
Then $U, S_1A_1, \ldots, S_mA_m \in \mathcal A (G_P)$, and for any $u \in \boldsymbol \beta^{-1} (U)$, we have $t' = 1 + \min \mathsf L (A_1 \cdot \ldots \cdot A_m) \le \mathsf t (H,u)$ by 1.
\end{proof}

\section{Krull monoids with small and with large global tame degree}

Let $H$ be a Krull monoid with class group $G$ such that every class contains a prime divisor, and suppose that $|G| \ge 3$. Then, by Proposition \ref{3.3}, we have
\[
\mathsf D^* (G) \le \mathsf D (G) \le \mathsf t (G) \le \mathsf t (H) \le 1 + \frac{\mathsf D (G) (\mathsf D (G)-1)}{2} \,.
\]
The main result in this section is Theorem \ref{4.11}.
It characterizes when the equality  $\mathsf D^* (G) = \mathsf t (G)$ and when the equality  $\mathsf D^* (G) = \mathsf t (H)$ do hold. These characterizations reveal the first example showing that  $\mathsf t (G) < \mathsf t (H)$ may happen. On the other hand, if we consider $\mathsf t (H)$ and $\mathsf D (G)$ as functions of the rank $r$ of $G$ (with fixed exponent), then $\mathsf t (H)$ is growing as the upper bound given above.

We start with two lemmas providing lower bounds for the global tame degree.

\medskip
\begin{lemma} \label{4.1}
Let $G$ be a finite abelian group with $|G| > 1$.
\begin{enumerate}
\item Then $\mathsf t (G) \ge 1 + \exp (G) \mathsf k^* (G)$.

\smallskip
\item If $G$ is cyclic of order $|G| = n \ge 25$, then $
      \mathsf t (G) > 2n - 7 \sqrt{n}+ 10$.
\end{enumerate}
\end{lemma}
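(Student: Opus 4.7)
The plan rests on the combinatorial characterisation of $\mathsf t(G,U)$ recalled after Definition~\ref{3.1}: given $U \in \mathcal A(G)$ and atoms $V_1, \ldots, V_m$ with $U \mid V_1 \cdots V_m$ but $U$ dividing no proper subproduct, one has
\[
\mathsf t(G,U) \ge \max\{m,\ 1 + \min \mathsf L(U^{-1} V_1 \cdots V_m)\}.
\]
Both parts are established by exhibiting explicit $U$ and $V_i$'s realising the asserted bounds.

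For statement~1, I would fix a basis $(f_1, \ldots, f_s)$ of $G \cong C_{q_1} \oplus \cdots \oplus C_{q_s}$ with $\mathrm{ord}(f_j) = q_j$, set $n = \exp(G)$, and take $g = f_1 + \cdots + f_s$, which has order $\mathrm{lcm}(q_1, \ldots, q_s) = n$. Put $U = g^n$, an atom since $kg \ne 0$ for $1 \le k < n$, and consider
\[
V \,=\, g \cdot \prod_{j=1}^{s} f_j^{\,q_j-1},
\]
whose sum is $g + \sum_j (q_j - 1) f_j = g - g = 0$, and whose proper nonempty subsums are nonzero: either such a subsum misses $g$ (then it has the form $\sum c_j f_j$ with $0 \le c_j \le q_j - 1$ not all zero, nonzero by independence of the basis) or it contains $g$ (then some $f_j^{q_j-1}$ is only partially present, again nonzero by independence). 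When $s \ge 2$ we have $g \ne f_j$ for each $j$, so every copy of $V$ carries exactly one $g$; hence in $V^n = g^n \prod_j f_j^{n(q_j-1)} = U \cdot W$, with $W = \prod_j f_j^{n(q_j-1)}$, the atom $U$ divides no proper subproduct of $V^n$. Since the only atoms of $\mathcal B(G)$ supported in $\{f_j\}$ are the $f_j^{q_j}$, $W$ admits the unique factorisation $\prod_j (f_j^{q_j})^{n(q_j-1)/q_j}$ of length $\sum_j n(q_j-1)/q_j = n\,\mathsf k^*(G)$. This yields $\mathsf t(G, U) \ge 1 + \exp(G)\,\mathsf k^*(G)$. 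In the remaining case $s = 1$ (cyclic of prime-power order) the construction degenerates to $V = U$, but the desired inequality reduces to $\mathsf t(G) \ge \mathsf D(G) = q_1$, which is Proposition~\ref{3.3}.4.

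For statement~2, the estimate from part~1 gives $\mathsf t(C_n) \ge 1 + \sum_i (n - n/q_i)$ where $n = \prod_i q_i$ is the prime-power factorisation. This is already of the form $2n - O(\sqrt n)$ whenever $n$ has two prime-power parts close to $\sqrt n$, but it is far too weak when $n$ is a prime power or has a very small prime factor (in the extreme $s=1$ case it gives only $n$). The plan is again to take $U = g^n$ with $g$ a generator of $C_n$ and to construct $n$ atoms $V_1, \ldots, V_n$, each carrying exactly one $g$ (forcing $U$ out of every proper subproduct), of length $L \ge 5$ chosen so that $|W| = n(L-1) \ge 4n$. The tails $V_i g^{-1}$ are drawn from a structured family indexed by a parameter $r$ of order $\sqrt n$, arranged so that the support of $W$ is a symmetric thin subset of $C_n \setminus \{0\}$ with multiplicities matching under negation up to $O(\sqrt n)$ defects. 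The pairwise factorisation of $W$ into length-$2$ atoms $(cg)((n-c)g)$ then has length $|W|/2 \approx 2n$.

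The main obstacle is to prove the matching lower bound $\min \mathsf L(W) \ge 2n - 7\sqrt n + 9$, i.e.\ that no factorisation of $W$ beats the pairwise one by more than $O(\sqrt n)$ atoms. I would argue this via additive combinatorics in $C_n$: any atom of $\mathcal B(C_n)$ of length $L \ge 3$ supported in $\mathrm{supp}(W)$ must avoid every complementary pair $(cg)((n-c)g)$---otherwise it would contain a length-$2$ zero-sum subsequence and fail to be minimal---so its support lies in a one-sided selection of at most $r = O(\sqrt n)$ elements of the pairing, whence $L = O(\sqrt n)$. A careful accounting, combining the bound $\sum L_i \le |W|/2$ over any collection of such long atoms (each drawing its mass from a single side of the pairing) with the per-atom saving $(L_i - 2)/2$ over the pairwise factorisation, then gives a total saving of at most $7\sqrt n / 2 - O(1)$ atoms. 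This yields $\min \mathsf L(W) \ge 2n - 7\sqrt n + 9$ and hence the desired strict inequality $\mathsf t(C_n) > 2n - 7\sqrt n + 10$.
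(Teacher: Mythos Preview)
Your argument for part~1 is correct and is precisely the construction behind \cite[Proposition~6.5.2]{Ge-HK06a}, which the paper simply cites. (The edge case $|G|=2$, where $\mathsf t(C_2)=0$ while $1+\exp(G)\mathsf k^*(G)=2$, is a minor imprecision in the hypothesis of the lemma itself; the paper only invokes the inequality for $|G|>2$.)

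Part~2, however, has a genuine gap. Your plan is to produce $W=U^{-1}V_1\cdots V_n$ with $|W|\approx 4n$ and symmetric thin support, and then to \emph{lower-bound} $\min\mathsf L(W)$ by controlling the long atoms in any factorization. The crucial step is the claim that any atom $T\mid W$ with $|T|\ge 3$ satisfies $|T|=O(\sqrt n)$, deduced from the fact that $T$ cannot contain a complementary pair $\{cg,-cg\}$ and hence has support in a one-sided selection of $r=O(\sqrt n)$ elements. But a bound on $|\supp(T)|$ does not bound $|T|$: an atom over $r$ elements of order $n$ can have length up to $n$ (e.g.\ $(cg)^{n/\gcd(c,n)}$, or mixed products such as $(g)^a(2g)^b$ with $a+2b=n$). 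Since the multiplicities in your $W$ are on average $|W|/|\supp(W)|\approx 4n/(2\sqrt n)=2\sqrt n$, the natural bound on $|T|$ is $r\cdot O(\sqrt n)=O(n)$, not $O(\sqrt n)$. With this, the subsequent ``careful accounting'' of savings collapses, and there is no obvious repair without specifying the construction and doing substantially more work.

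The paper sidesteps this difficulty entirely by arranging for $W$ to have a \emph{unique} factorization, so that $\min\mathsf L(W)$ is read off directly. Concretely: pick $q\in[2,n-2]$ with $\gcd(q,n)=1$, write $n=qm+j$ with $m,j\in\N$, and set
\[
U=(qg)^n,\quad V=(qg)\,g^{\,n-q},\quad V'=(qg)(-g)^q,\quad U_0=g^n,\quad U_1=(-g)g.
\]
Then $A=V^{\,n-m}(V')^m=U\cdot U_1^{\,qm}U_0^{\,n-q-m}$, and since $U_1^{qm}U_0^{n-q-m}$ is supported on $\{g,-g\}$ with the $(-g)$-multiplicity less than $n$, it has exactly one factorization. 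This gives
\[
\mathsf t(G)\ \ge\ n+(q-1)(m-1),
\]
with no lower-bound analysis needed. The remaining task is number-theoretic: if $n$ has at most two prime divisors, one can always find $q$ coprime to $n$ in $[\lfloor\sqrt n\rfloor-3,\lfloor\sqrt n\rfloor]$, yielding $(q-1)(m-1)>(\sqrt n-2)(\sqrt n-5)=n-7\sqrt n+10$; if $n$ has at least three prime divisors, part~1 already gives $\mathsf t(G)\ge 1+n\mathsf k^*(G)\ge 2n$, which is stronger.
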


\begin{proof}
1. See \cite[Proposition 6.5.2]{Ge-HK06a}.

\smallskip
2. Let $G$ be cyclic of order $|G| = n \ge 25$. We start with a special construction (which is very similar to \cite[Proposition 6.5.2]{Ge-HK06a}).
We set $n = qm + j$, where  $q  \in [2,n-2]$ with $\gcd (q, n) = 1$, and  $m, j \in \N$, and choose a non-zero element $g \in G$. The atoms
\[
U = (qg)^n, \ U_0 = g^n, \ U_1 = (-g)g, \ V = (qg)g^{n-q}, \ V' = (qg)(-g)^q
\]
are pairwise distinct, and we have
\[
A = V^{n-m}{V'}^m = U U_1^{qm}U_0^{n-q-m} \in \mathcal B (G) \,.
\]
Since $|\mathsf Z (U_1^{qm}U_0^{n-q-m})| = 1$, $A$ has precisely one factorization which is divisible by $U$. Therefore we obtain that
\begin{equation}\label{eq_nqm}
\begin{aligned}
\mathsf t (G) & \ge \mathsf t (A, U) \ge \mathsf d ( V^{n-m}{V'}^m, U U_1^{qm}U_0^{n-q-m}) \\
& = \max \{n, 1+qm+n-q-m\} = n + (q-1)(m-1) \,.
\end{aligned}
\end{equation}
\medskip

Thus, it would remain to find $q$, $m$ and $j$ fulfilling the relevant conditions such that $n + (q-1)(m-1)$ is greater than $2n - 7 \sqrt{n} +10$. The main obstacle here is that $q$ needs to be co-prime to $n$. To side-step this problem for the most part, we first apply the first part of this result.

It is easy to see that if $n$ is divisible by at least three distinct primes then $\mathsf{k}(G) \ge 2 -1/n$, and the result follows directly from the first part.

So, we may assume that $n$ is divisible by at most two distinct primes.
For such an $n$ it is well-known and not hard to see that among each four consecutive integers there is at least one co-prime to $n$. Indeed,  consider two distinct primes $p,q$, and assume for a contradiction $a,a+1,a+2,a+3$ are all divisible by $p$ or $q$, say $p \mid a$, then $p\nmid (a+1)$ so $q\mid a+1$ implying that $p$ needs to divide $a+2$ and thus $p=2$ and $q\neq 2$, and $a+3$ is divisible neither by $p$ nor by $q$, a contradiction.

Thus, we can choose some $q$ that is co-prime to $n$ from the set
$\{ \lfloor \sqrt{n}\rfloor - 3, \dots , \lfloor \sqrt{n}\rfloor \}$; note that here we use the condition $n \ge 25$ to ensure that these elements are at least $2$.

We then set $m =\lfloor n/q \rfloor$, the quotient of the Euclidean division of $n$ by $q$, and $j$ the rest (note that $j$ is non-zero as $q$ does not divide $n$). It follows that $m \ge \lfloor \sqrt{n}\rfloor $.

From this we get that $(m-1)(q-1) > (\sqrt{n} -2) (\sqrt{n} - 5)$, and the claim follows.
\end{proof}

It is apparent from the proof that also for $n< 25$ non-trivial bounds can be obtained using the same method. However, if one wishes to have a bound for some specific (small) values of $n$, one should in any case rather use the bound $n + (q-1)(m-1)$ directly for an in this case suitable choice of $q$, or at least not use the crude estimate $\lfloor \sqrt{n} \rfloor >  \sqrt{n} - 1$ so that we made no effort to avoid the condition $n \ge 25$.
Moreover, for $n$ a prime or also a prime-power one can get somewhat better bounds in essentially the same way,
using $q= \lfloor \sqrt{n}\rfloor$ or $q \in \{\lfloor \sqrt{n}\rfloor -1, \lfloor \sqrt{n}\rfloor\}$, respectively.

\begin{remark} \
\begin{enumerate}
  \item If $G$ is cyclic of order $|G| = n \ge 5$ and $n$ is a prime number, then
      \(
      \mathsf t (G) \ge 2n - 4 \sqrt{n}+ 4.
      \)
  \item If $G$ is cyclic of order $|G| = n \ge 9$ and $n$ is a prime-power, then
      \(
      \mathsf t (G) \ge 2n - 5 \sqrt{n}+ 6.
      \)
\end{enumerate}

\end{remark}

\medskip
\begin{lemma} \label{4.2}
Let $G$ be a finite abelian group, and let $r \in \N$ be even.
\begin{enumerate}
\item Let $(e_1, \ldots, e_r) \in G^r$ be independent such that  $\gcd \big(\ord (e_i),   \ord (e_j) \big) > 1$ for all $i, j \in [1,r]$, $e_0 = -e_1 - \ldots - e_r$, $\overline e = \sum_{i=1}^r (-1)^{i+1} e_i$,
      \[
U = \ \overline e \prod_{i \in [1,r] \ \text{odd}} (-e_0-e_i) \prod_{i \in [1,r] \ \text{even}} (e_0 + e_i) \,,
      \] and
\[
\frac{r}{2} = \ord (e_i) k_i + j_i \quad \text{with} \quad j_i \in [0, \ord (e_i)-1] \quad  \text{and} \quad k_i = \Big\lfloor \frac{r}{2\ord (e_i)} \Big\rfloor \quad \text{for all} \quad i \in [1,r] \,.
\]
 Then, $U$ is a minimal zero-sum sequence and
\[
 \mathsf t (G, U) \ge 1 + \sum_{i=1}^r   \left( 2 \Big\lfloor \frac{r}{2\ord (e_i)} \Big\rfloor + j_i \right) \,.
\]

\smallskip
\item If $G = C_n^r$ with $n \ge 2$ and $\gcd (r - 1, n)=1$, then $\mathsf t (G) \ge 1 + 2r \lfloor \frac{r}{2n} \rfloor$ and $\mathsf t (C_2^r) \ge 1 + \frac{r^2}{2}$.
\end{enumerate}
\end{lemma}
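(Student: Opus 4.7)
The plan is to exhibit a zero-sum sequence $a \in \mathcal B(G)$ together with two factorizations whose mutual distance achieves the claimed bound. Write $n_i = \ord(e_i)$ and note that $-e_0 - e_i = \sum_{k\ne i} e_k$ for $i$ odd while $e_0 + e_i = -\sum_{k\ne i} e_k$ for $i$ even. For each element of $U$ I define an atom in which that element is completed to zero-sum by the cheapest possible $\pm e_j$-multiplicities:
\[
V_0 = \overline{e} \prod_{i \text{ odd}} (-e_i) \prod_{i \text{ even}} e_i, \quad V_i^x = (-e_0 - e_i) \prod_{k\ne i}(-e_k) \ (i \text{ odd}), \quad V_i^y = (e_0 + e_i) \prod_{k\ne i} e_k \ (i \text{ even}).
\]
A direct coordinate count then yields the key identity
\[
a := V_0 \cdot \prod_{i \text{ odd}} V_i^x \cdot \prod_{i \text{ even}} V_i^y = U \cdot \prod_{j=1}^{r} e_j^{r/2}(-e_j)^{r/2},
\]
since each $e_j$ and each $-e_j$ appears exactly $r/2$ times among the $V$-atoms beyond its single occurrence contributing to $U$.

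The next step is to verify that $U$ and each $V$ is a minimal zero-sum sequence. For $U$, a proper zero subsum with $\{0,1\}$-valued coefficients $\epsilon$, $m_i$ ($i$ odd), $\mu_i$ ($i$ even) leads, by reading the $e_k$-coordinate, to the congruences $m_k \equiv A \pmod{n_k}$ for $k$ odd and $\mu_k \equiv B \pmod{n_k}$ for $k$ even, where $A = \epsilon + M - N$ and $B = 2\epsilon - A$ depend linearly on the totals $M = \sum_i m_i$, $N = \sum_i \mu_i$. Since $\{0,1\}$ meets any residue class modulo $n_k \ge 2$ in at most one point, and since having two odd indices $k, k'$ with $m_k = 0$ and $m_{k'} = 1$ would force $\gcd(n_k, n_{k'}) \mid \gcd(A, A-1) = 1$, contradicting the pairwise non-coprimality hypothesis, the residue patterns must be uniform across odd indices (and similarly across even indices). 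A short case analysis across the four resulting combinations eliminates the mixed cases via $\gcd(r/2, r/2 - 1) = 1$ and leaves only the trivial and the full subsum. The atoms $V_0, V_i^x, V_i^y$ are handled by simpler instances of the same congruence argument.

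Once atomicity is in hand, $U \mid a$ follows from the identity, and omitting any single $V$-factor removes the unique occurrence of one of $\overline{e}, -e_0 - e_i, e_0 + e_i$, so $U$ divides no proper subproduct of the factorization $z_1 := V_0 \prod_{i \text{ odd}} V_i^x \prod_{i \text{ even}} V_i^y$ of $a$. To bound $\min \mathsf L(U^{-1} a)$, I work componentwise: in $\mathcal B(\langle e_j \rangle)$ the atoms are exactly $e_j^{n_j}$, $(-e_j)^{n_j}$, and $e_j(-e_j)$, and any factorization of $e_j^{r/2}(-e_j)^{r/2}$ uses $s$ copies of each of the first two (counts must agree) and $c$ copies of the third with $s n_j + c = r/2$; the total atom count $2s + c = r/2 - s(n_j - 2)$ is minimized at $s = k_j$, $c = j_j$, yielding $2k_j + j_j$. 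Independence of the $e_j$'s gives $\min \mathsf L(U^{-1} a) = \sum_j (2k_j + j_j)$. Taking $z_2 = U \cdot z^*$ with $z^*$ a minimum-length factorization of $U^{-1} a$, the atoms of $z_1$ and $z_2$ are pairwise distinct by support, so $\gcd(z_1, z_2) = 1$ and $\mathsf d(z_1, z_2) = \max\{|z_1|, |z_2|\} = 1 + \sum_i(2 k_i + j_i)$, establishing Part~1. Part~2 is immediate: for $G = C_n^r$ with a basis of elements of order $n$, Part~1 yields $\mathsf t(G) \ge 1 + r(2k + j) \ge 1 + 2r \lfloor r/(2n) \rfloor$, and specializing to $n = 2$ makes $2k + j = r/2$, hence $\mathsf t(C_2^r) \ge 1 + r^2/2$.

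The hard part is verifying the minimality of $U$ using only the pairwise non-coprimality of the orders $n_i$: the key move is to extract the contradiction via the coprimality of consecutive integers $\gcd(A, A-1) = 1$, and then to use $\gcd(r/2, r/2 - 1) = 1$ to eliminate the residual mixed combinations. The remaining steps are essentially bookkeeping --- the coordinate count verifying the product identity, the one-variable linear optimization of the atom count, and the distance computation.
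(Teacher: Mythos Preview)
Your construction and the bulk of the argument coincide with the paper's: the atoms $V_0$, $V_i^x$, $V_i^y$ are exactly the paper's $V_0,\ldots,V_r$, the product identity $V_0\cdots V_r = U\cdot \prod_j\big(e_j(-e_j)\big)^{r/2}$ is the same, and your computation of $\min\mathsf L(W)=\sum_j(2k_j+j_j)$ matches. Your minimality proof for $U$ is a slightly different (and more explicit) packaging of the paper's argument: the paper shows $S=\overline e^{\,-1}U$ is zero-sum free by reading off the $e_j$-coordinate and noting that two distinct values of $\delta_j\in\{-1,0,1\}$ would force $\gcd\big(\ord(e_j),\ord(e_{j'})\big)\mid 1$; your uniformity-plus-case-analysis via $\gcd(A,A-1)=1$ and $\gcd(r/2,r/2-1)=1$ amounts to the same mechanism.

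There is, however, a genuine gap in your final step. You conclude by computing $\mathsf d(z_1,z_2)$ for the \emph{single} factorization $z_2=U\cdot z^*$ and asserting that this bounds $\mathsf t(G,U)$ from below. But $\mathsf t(a,U)$ is by definition $\sup_{z}\inf_{z'\in U\mathsf Z(G)}\mathsf d(z,z')$, so exhibiting one far-away $z_2$ does not suffice; you would need to show that \emph{every} $z'$ divisible by $U$ is at least that far from $z_1$. Moreover, your claim that $\gcd(z_1,z_2)=1$ ``by support'' can fail outright: for $r=2$ and $n_1=n_2=2$ one checks $V_0=U$, $V_1^x=e_2^2$, $V_2^y=e_1^2$, so in fact $z_1=z_2$ and $\mathsf d(z_1,z_2)=0$. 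The fix is to bypass the distance computation entirely and use the characterization of $\mathsf t(H,u)$ recorded after Definition~\ref{3.1}: since $U\mid V_0\cdots V_r$ but $U$ divides no proper subproduct, one has directly
\[
\mathsf t(G,U)\ \ge\ \max\big\{\,r+1,\ 1+\min\mathsf L(U^{-1}V_0\cdots V_r)\,\big\}\ \ge\ 1+\sum_{i=1}^r(2k_i+j_i),
\]
which is precisely how the paper finishes.
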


\begin{proof}
1. We set $S = \overline{e}^{-1}U$. Since $\sigma (S) = -\overline{e}$, it follows that $U = \overline{e}S$ is a zero-sum sequence. To show that $U$ is a minimal zero-sum sequence it remains to verify that $S$ is zero-sumfree.  Assume to the contrary that $S$ has a non-empty zero-sum subsequence
\[
T = \prod_{i \in I_o} (-e_0-e_i) \prod_{i \in I_e} (e_0 + e_i) \,, \quad \text{where $I_o$ and $I_e$ are subsets of $[1,r]$ of odd and even numbers, resp.}
\]
For $i \in [1,r]$ let $a_i \in [0,\ord(e_i)-1]$ such that $\sigma(T) = \sum_{i=1}^r a_i e_i$.
Recalling the definition of $e_0$ we infer that
\[
a_je_j = (|I_o| - |I_e| + \delta_j ) e_j  \quad \text{where} \quad \delta_j = \begin{cases}  -1 &  \text{ for } j \in I_o, \\
  1 & \text{ for } j \in I_e, \\
 0  & \text{ otherwise.}
\end{cases}
\]
Since $\sigma(T)= 0$ and $(e_1, \ldots, e_r)$ is independent, it follows that $a_je_j= 0 $ for each $j\in [1,r]$, that is  $\ord(e_j) \mid (|I_o| - |I_e| + \delta_j )$.

Now, since $T$ is non-empty not all $\delta_j$ equal $0$. However, this implies that no $\delta_j$ equals $0$. Indeed, if  $\delta_{k}=0$ for some $k$, then  $\ord(e_k) \mid |I_o| - |I_e|$, and considering some $k'$ such that $\delta_{k'} \neq 0$ we infer that $\ord (e_{k'})$ divides $|I_o| - |I_e|+1$ or $|I_o| - |I_e|-1$. This entails that $\gcd \big( \ord(e_k),\ord(e_{k'}) \big)$ divides two consecutive integers, a contradiction.  Consequently, $I_o \cup I_e = [1,r]$,  $T=S$, and thus $\sigma (T) = \sigma(S)=\overline{e} \neq 0$, a contradiction.

We set $V_0 = \overline e \prod_{\nu=1}^r (-1)^{\nu} e_{\nu}$, and we define
\[
V_i = \begin{cases}
(-e_0-e_i) \prod_{\nu \in [1,r] \setminus \{i\}} (-e_{\nu})\,, & \quad \text{if $i$ is odd,} \\
      (e_0+e_i) \prod_{\nu \in [1,r] \setminus \{i\}} e_{\nu}\,, & \quad \text{if $i$ is even.}
      \end{cases}
\]
By construction, we have $U \t \prod_{\nu=0}^r V_{\nu}$,  but $U$ does not divide any proper subproduct. Furthermore, we have
\[
W = U^{-1} \prod_{\nu=0}^r V_{\nu} = \prod_{\nu=1}^r \big( (-e_{\nu}) e_{\nu} \big)^{r/2} \,.
\]
Next we study $\mathsf L (W)$. For any nonzero $g \in G$, $k \in \mathbb N_0$, and $j \in [0, \ord (g)-1]$, we have
\[
\mathsf L \Big(  \big( (-g)g \big)^{k \ord (g)+j} \Big) = \{ 2k + \nu(\ord (g)-2) + j \mid \nu \in [0,k] \} \,,
\]
and the minimum of this set equals $2k+j$. Thus, for every $i \in [1,r]$, we obtain that
\[
\mathsf L \Big(  \big( (-e_i)e_i \big)^{r/2} \Big) = \big\{ 2k_i + \nu(\ord (e_i)-2) + j_i \mid \nu \in [0,k_i] \big\} \,,
\]
with
\[
\min \mathsf L \Big(  \big( (-e_i)e_i \big)^{r/2} \Big) = 2    \Big\lfloor \frac{r}{2\ord (e_i)} \Big\rfloor + j_i \,.
\]
Since
\[
\mathsf L (W) = \sum_{i=1}^r \mathsf L \Big(  \big( (-e_i)e_i \big)^{r/2} \Big) \,,
\]
$\min \mathsf L (W)$ is the sum of the minima, and it follows that
\[
\mathsf t (G, U) \ge 1 + \min \mathsf L(W) =  1 + \sum_{i=1}^r \Big( 2  \Big\lfloor \frac{r}{2\ord (e_i)} \Big\rfloor + j_i  \Big) \,.
\]

\smallskip
2. Suppose that $G = C_n^r$ with $r$ and $n$ as above. After choosing a basis $(e_1, \ldots, e_r)$ of $G$ with $\ord (e_1) = \ldots = \ord (e_r) = n$, the first inequality follows immediately from 1. Now suppose that $n=2$.  If $r \equiv 0 \mod 4$, then the statement on $\mathsf t (C_2^r)$ follows from the first statement. If $r = 4k + 2$ with $k \in \mathbb N_0$, then $r/2 = 2k + 1$, and 1. implies that
\[
\mathsf t (G) \ge 1 + r \big( 2k+1 \big) = 1 + \frac{r^2}{2} \,. \qedhere
\]
\end{proof}

\medskip
\begin{lemma} \label{4.3}
Let $G$ be a finite abelian group. Then $\exp (G) \mathsf k^* (G) \ge \mathsf d^* (G)$,
and equality holds if and only if $G$ is a $p$-group of the form $G = C_n^r$ where $n, r \in \N$.
\end{lemma}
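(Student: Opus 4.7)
The plan is to reduce the inequality to a one-dimensional statement, one invariant factor at a time. Writing $G \cong C_{n_1} \oplus \cdots \oplus C_{n_r}$ with $1 < n_1 \t \ldots \t n_r$, the Chinese Remainder Theorem refines this to the prime-power decomposition: $C_{n_i} \cong \bigoplus_{p \t n_i} C_{p^{v_p(n_i)}}$, where $v_p$ denotes the $p$-adic valuation. Consequently the $q_j$'s in the definition of $\mathsf{k}^*(G)$ are exactly the nontrivial prime powers $p^{v_p(n_i)}$, and
\[
\mathsf{k}^*(G) \ = \ \sum_{i=1}^{r} \sum_{p \t n_i} \bigl( 1 - p^{-v_p(n_i)} \bigr) \ = \ \sum_{i=1}^{r} \mathsf{k}^*(C_{n_i}).
\]

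The key lemma to establish is: for every integer $n \ge 1$, one has $n \cdot \mathsf{k}^*(C_n) \ge n - 1$, with equality if and only if $n$ is a prime power (with $n=1$ included, both sides being $0$). To prove this, I would set $x_p = 1 - p^{-v_p(n)} \in [0,1)$ and observe that $\prod_{p \t n}(1 - x_p) = 1/n$, so the claim becomes
\[
\sum_{p \t n} x_p \ \ge \ 1 - \prod_{p \t n} (1 - x_p),
\]
which follows by a short induction on the number of distinct prime divisors of $n$: the two-factor case reads $x_1 + x_2 \ge x_1 + x_2 - x_1 x_2$, and equality holds iff at most one $x_p$ is nonzero, i.e. iff $n$ is a prime power.

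With the lemma in hand, combine the pieces as follows. Since $n_i \t n_r = \exp(G)$ for each $i \in [1,r]$, we have $\exp(G) \cdot \mathsf{k}^*(C_{n_i}) \ge n_i \cdot \mathsf{k}^*(C_{n_i}) \ge n_i - 1$, and summing yields
\[
\exp(G) \cdot \mathsf{k}^*(G) \ = \ \sum_{i=1}^{r} \exp(G) \cdot \mathsf{k}^*(C_{n_i}) \ \ge \ \sum_{i=1}^{r} (n_i - 1) \ = \ \mathsf{d}^*(G).
\]

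For the equality characterization: since each $n_i > 1$ forces $\mathsf{k}^*(C_{n_i}) > 0$, equality in $\exp(G) \cdot \mathsf{k}^*(C_{n_i}) \ge n_i \cdot \mathsf{k}^*(C_{n_i})$ demands $n_i = n_r$, while equality in the lemma demands $n_i$ be a prime power. Hence $n_1 = \cdots = n_r = n$ with $n$ a prime power, i.e. $G = C_n^r$ is a $p$-group; the converse is an immediate calculation. The trivial case $G = \{0\}$ is handled by the conventions $\mathsf{d}^*(\{0\}) = \mathsf{k}^*(\{0\}) = 0$. I do not anticipate a serious obstacle here: the argument is essentially a clean telescoping once both sides of the inequality are written in the common coordinates provided by the invariant factors.
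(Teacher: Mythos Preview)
Your proof is correct and follows essentially the same approach as the paper: decompose $\mathsf{k}^*(G)=\sum_{i=1}^r \mathsf{k}^*(C_{n_i})$, then chain the two inequalities $\exp(G)\,\mathsf{k}^*(C_{n_i}) \ge n_i\,\mathsf{k}^*(C_{n_i}) \ge n_i-1$ and read off the equality conditions. The only cosmetic difference is in verifying the cyclic case $n\,\mathsf{k}^*(C_n)\ge n-1$: the paper uses the crude bound $(q_j-1)/q_j\ge 1/2$ to get $\mathsf{k}^*(C_n)\ge 1$ when $n$ has at least two prime factors, whereas you recast it as $\sum x_p \ge 1-\prod(1-x_p)$ via $\prod(1-x_p)=1/n$; both are equally short.
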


\begin{proof}
By definition, the statement holds if $|G| = 1$. Suppose that $|G|> 1$, say
\[
G \cong C_{n_1} \oplus \dots \oplus C_{n_{r}} \cong C_{q_1} \oplus \dots \oplus C_{q_{s}}\,,
\]
where $r, s \in \mathbb N$, $n_1, \ldots,n_{r} \in \N$, \ $1 < n_1 \t \ldots
\t n_{r}$, and $q_1, \ldots , q_{s}$ are
prime powers.
Note that $\exp (G) = n_r = \lcm (q_1, \ldots, q_s)$. Obviously, the statement holds for cyclic groups of prime power order.
Suppose that $G$ is cyclic but not a $p$-group. Then $r=1$ and $s \ge 2$. Since $\frac{q_j-1}{q_j} \ge \frac{1}{2}$ for all $j \in [1,s]$, it follows that
\[
\sum_{j=1}^s \frac{q_j-1}{q_j} \ge 1 \quad \text{and hence} \quad n_r \sum_{j=1}^s \frac{q_j-1}{q_j} \ge n_r > \sum_{i=1}^r (n_i-1) \,.
\]
Thus the statement holds for cyclic groups. If $G$ is not cyclic, then
\[
\exp (G) \mathsf k^* (G) = \exp (G) \sum_{i=1}^r \mathsf k^* (C_{n_i}) \overset{(1)}{\ge} \sum_{i=1}^r n_i \mathsf k^* (C_{n_i}) \overset{(2)}{\ge} \sum_{i=1}^r \mathsf d^* (C_{n_i}) = \mathsf d^* (G) \,,
\]
where equality in $(1)$ holds if and only if $n_1=\ldots=n_r$ and equality in $(2)$ holds if and only if $n_1, \ldots, n_r$ are prime powers.
\end{proof}

\medskip
\begin{proposition} \label{4.4}
Let $G_1, G_2$ be  finite abelian groups with  $\mathsf t (G_1) > \mathsf D (G_1)$.
\begin{enumerate}
\item If $\mathsf t (G_2) > \mathsf D (G_2)$ or $\mathsf D (G_2) = \mathsf D^* (G_2)$, then $\mathsf t (G_1 \oplus G_2) \ge \mathsf t (G_1) + \mathsf t (G_2) - 1$.

\smallskip
\item If  $\mathsf d (G_1 \oplus G_2) = \mathsf d^* (G_1) + \mathsf d^* (G_2)$, then $\mathsf t (G_1 \oplus G_2) > \mathsf D (G_1 \oplus G_2)$.
\end{enumerate}
\end{proposition}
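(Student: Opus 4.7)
The plan is to prove Part~1 by an explicit gluing construction in $\mathcal B(G_1 \oplus G_2)$ and to deduce Part~2 from it via a short additivity argument on $\mathsf d$.

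For Part~1, regard $G_1, G_2$ as the coordinate subgroups of $G_1 \oplus G_2$, and pick atoms $U_i \in \mathcal A(G_i)$ together with witnesses to the local tame degrees coming from the characterisation of $\mathsf t(H, u)$ recalled after Definition~\ref{3.1}: write $U_i = S_1^{(i)} \cdots S_{m_i}^{(i)}$ and take atoms $v_j^{(i)} = S_j^{(i)} A_j^{(i)}$ such that $U_i$ divides $v_1^{(i)} \cdots v_{m_i}^{(i)}$ but no proper subproduct. Fix $a \in S_1^{(1)}$ and $b \in S_1^{(2)}$ and set
\[
U = (a+b) \cdot (U_1 a^{-1}) \cdot (U_2 b^{-1}), \qquad v_1^{*} = (a+b) \cdot (v_1^{(1)} a^{-1}) \cdot (v_1^{(2)} b^{-1}).
\]
Analysing any would-be proper zero-sum subsequence by whether it contains the mixed element $(a+b)$, and invoking atomicity of $U_1, U_2$ (resp.\ $v_1^{(1)}, v_1^{(2)}$), shows that both $U$ and $v_1^{*}$ are minimal zero-sum sequences in $\mathcal B(G_1 \oplus G_2)$.

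The decisive product is $V = v_1^{*} \cdot v_2^{(1)} \cdots v_{m_1}^{(1)} \cdot v_2^{(2)} \cdots v_{m_2}^{(2)}$. Because $(a+b)$ appears only in $v_1^{*}$ and omitting any other $v_j^{(i)}$ would give rise to a proper $U_i$-subproduct excluded by the choice of the $v_j^{(i)}$, we have $U \mid V$ with no proper subproduct of the listed atoms divisible by $U$; the characterisation of the local tame degree then yields $\mathsf t(G_1 \oplus G_2, U) \ge 1 + \min \mathsf L(V/U)$. Since $V/U = A_1^{(1)} \cdots A_{m_1}^{(1)} \cdot A_1^{(2)} \cdots A_{m_2}^{(2)}$ is supported in $G_1 \cup G_2$ and any atom of $\mathcal B(G_1 \oplus G_2)$ supported there must be concentrated in a single coordinate (otherwise it contains a proper coordinatewise zero-sum subsequence), every factorization splits and
\[
\min \mathsf L(V/U) = \min \mathsf L_{G_1}(A_1^{(1)} \cdots A_{m_1}^{(1)}) + \min \mathsf L_{G_2}(A_1^{(2)} \cdots A_{m_2}^{(2)}).
\]
The hypothesis $\mathsf t(G_1) > \mathsf D(G_1)$ forces $\ell > m$ in some witness for the tame degree of $G_1$, so $U_1$ and the $v_j^{(1)}$ can be chosen with $\min \mathsf L_{G_1}(A_1^{(1)} \cdots A_{m_1}^{(1)}) = \mathsf t(G_1) - 1$. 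In case $\mathsf t(G_2) > \mathsf D(G_2)$ the same argument gives $\mathsf t(G_2) - 1$ on the $G_2$ side, finishing Part~1. In the remaining case $\mathsf D(G_2) = \mathsf D^*(G_2)$ with $\mathsf t(G_2) = \mathsf D(G_2)$, take an independent family $(e_1, \ldots, e_r)$ of $G_2$ with $\ord(e_i) = n_i$ realising $\mathsf D^*(G_2)$ and set $U_2 = e_1^{n_1 - 1} \cdots e_r^{n_r - 1} g$ with $g = e_1 + \ldots + e_r$; the $v_j^{(2)}$ are then produced from a basis-adapted partition of $U_2$ together with atoms of the shapes $(-e_i) e_i$ and $e_i^{n_i}$, arranged so that $A_1^{(2)} \cdots A_{m_2}^{(2)}$ forces at least $\sum_i (n_i - 1) = \mathsf D(G_2) - 1 = \mathsf t(G_2) - 1$ atoms in any factorization in $\mathcal B(G_2)$.

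Part~2 is a short consequence. Concatenating a maximal zero-sum-free sequence over $G_1$ with one over $G_2$ gives $\mathsf d(G_1 \oplus G_2) \ge \mathsf d(G_1) + \mathsf d(G_2) \ge \mathsf d^*(G_1) + \mathsf d^*(G_2)$, so the hypothesis forces $\mathsf d(G_i) = \mathsf d^*(G_i)$ and hence $\mathsf D(G_i) = \mathsf D^*(G_i)$ for both $i$. Part~1 (case~(b)) therefore applies, and combined with $\mathsf t(G_1) > \mathsf D(G_1)$ together with $\mathsf t(G_2) \ge \mathsf D(G_2)$ from Proposition~\ref{3.3}.4, this gives $\mathsf t(G_1 \oplus G_2) \ge \mathsf t(G_1) + \mathsf t(G_2) - 1 > \mathsf D(G_1) + \mathsf D(G_2) - 1 = \mathsf D(G_1 \oplus G_2)$. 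The hard part is the case~(b) construction in Part~1: each $v_j^{(2)}$ must remain \emph{essential} so that the ``no proper subproduct'' condition is preserved cyclic-component by cyclic-component, while at the same time the tails $A_1^{(2)} \cdots A_{m_2}^{(2)}$ must jointly be rich enough to force a factorization of length $\mathsf D(G_2) - 1$. Balancing these competing demands via the basis-driven partition is the technical heart of the argument.
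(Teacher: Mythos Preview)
Your gluing construction and the treatment of the case $\mathsf t(G_2) > \mathsf D(G_2)$ coincide with the paper's proof; the splitting $\min \mathsf L(V/U) = \min \mathsf L_{G_1}(\cdot) + \min \mathsf L_{G_2}(\cdot)$ and the verification that $U$, $v_1^{*}$ are atoms and that no proper subproduct is divisible by $U$ are all correct.

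The genuine gap is in case~(b), where $\mathsf t(G_2) = \mathsf D(G_2) = \mathsf D^*(G_2)$. Your choice $U_2 = e_1^{n_1-1}\cdots e_r^{n_r-1}g$ (an atom of \emph{maximal} length) cannot be made to work, and the vague phrase ``arranged so that $\ldots$ forces at least $\sum_i(n_i-1)$ atoms'' hides an obstruction rather than a construction. Take $G_2 = C_3$, so $r=1$, $g=e_1$, $U_2 = e_1^3$, and one needs $\min \mathsf L(\prod_j A_j^{(2)}) \ge 2$. The only atoms over $C_3\setminus\{0\}$ are $e_1^3$, $(-e_1)^3$, $e_1(-e_1)$, and an exhaustive check of the possibilities $m_2\in\{2,3\}$ shows that every configuration with $U_2 \mid \prod_j v_j^{(2)}$ and ``no proper subproduct'' yields $\prod_j A_j^{(2)}$ equal to an atom (so $\min\mathsf L = 1$) or forces some $v_j^{(2)}$ to fail to be an atom. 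Thus no ``basis-driven partition'' of your $U_2$ can deliver the required bound.

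The paper resolves this by going to the opposite extreme: it takes $U^{(2)} = (-e_0)e_0$, an atom of length~$2$ (with $e_0 = e_1+\ldots+e_r$), and sets $V_1^{(2)} = e_0\prod_{i=1}^r e_i^{n_i-1}$, $V_2^{(2)} = -V_1^{(2)}$. Then $m_2 = 2$, the ``no proper subproduct'' condition is immediate, and the quotient is $\prod_{i=1}^r\big((-e_i)e_i\big)^{n_i-1}$, whose unique factorization has length $\sum_i(n_i-1) = \mathsf D^*(G_2)-1$. So the key idea you are missing is that in case~(b) one should take $U^{(2)}$ \emph{short}, not long, and push all the length into the $V_j^{(2)}$'s. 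Your Part~2 argument is fine once Part~1 is in place (the appeal to Proposition~\ref{3.3}.4 for $\mathsf t(G_2)\ge\mathsf D(G_2)$ matches the paper's implicit use of the same bound).
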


\begin{proof}
1. Let $i \in [1,2]$.
By definition of $\mathsf t (G_i)$, there exists an $U^{(i)} \in \mathcal A (G_i)$ with $\mathsf t (G_i) = \mathsf t (G_i, U^{(i)})$. By definition of $\mathsf t (G_i, U^{(i)})$, there are $\ell_i, m_i \in \mathbb N$, $U_2^{(i)}, \ldots, U_{\ell_i}^{(i)}, V_1^{(i)}, \ldots V_{m_i}^{(i)} \in \mathcal A (G_i)$ such that $U^{(i)} \t V_1^{(i)} \cdot \ldots \cdot V_{m_i}^{(i)}$, but $U^{(i)}$ divides no proper subproduct, $U^{(i)} U_2^{(i)} \cdot \ldots \cdot U_{\ell_i}^{(i)} = V_1^{(i)} \cdot \ldots \cdot V_{m_i}^{(i)}$, and $\mathsf t (G_i, U^{(i)}) = \max \{\ell_i, m_i \}$.
If  $\mathsf t (G_i, U^{(i)}) = \mathsf t (G_i) > \mathsf D (G_i)$, then   $m_i \le |U^{(i)}| \le \mathsf D (G_i)$ implies that $m_i < \ell_i = \mathsf t (G_i, U^{(i)})$ and
\[
\ell_i - 1 = \min \mathsf L ( U_2^{(i)} \cdot \ldots \cdot U_{\ell_i}^{(i)} ) \,.
\]

Now suppose that $\mathsf t (G_2) = \mathsf D (G_2) = \mathsf D^* (G_2)$. Then we provide a new construction of the above type where we have $\ell_2 \ge m_2$ and $\ell_2 - 1 = \min \mathsf L ( U_2^{(2)} \cdot \ldots \cdot U_{\ell_2}^{(2)} ) $. For simplicity we use the same notation as above.
Let $(e_1, \ldots, e_r)$ be a basis of $G$ such that $\ord (e_i) = n_i$ for all $i \in [1,r]$, $1 < n_1 \t \ldots \t n_r$ and $\mathsf d^* (G) = \sum_{i=1}^r (n_i-1)$. We set $e_0 = e_1+ \ldots + e_r$,
\[
U^{(2)} = (-e_0)e_0 , \ V_1^{(2)} = e_0 \prod_{i=1}^r e_i^{n_i-1} \ , \quad \text{and} \quad V_2^{(2)} = - V_1^{(2)} \,.
\]
Then we get $U^{(2)} \prod_{i=1}^r \big( (-e_i)e_i \big)^{n_i-1} = V_1^{(2)} V_2^{(2)}$, $\mathsf t (G_2, U^{(2)}) = \mathsf t (G_2) = 1 + \mathsf d^* (G)$, $\ell_2 = \mathsf d^* (G)+1 \ge 2 = m_2$ and
\[
\ell_2 - 1 = \min \mathsf L ( U_2^{(2)} \cdot \ldots \cdot U_{\ell_2}^{(2)} ) \,,
\]
with the obvious definition of $U_2^{(2)}, \ldots,  U_{\ell_2}^{(2)}$.

We continue simultaneously for both cases.
For $\nu \in [1, m_i]$, we set $V_{\nu}^{(i)} = S_{\nu}^{(i)}A_{\nu}^{(i)}$, with $S_{\nu}^{(i)}, A_{\nu}^{(i)} \in \mathcal F (G_i)$ such that $U^{(i)} = S_1^{(i)} \cdot \ldots \cdot S_{m_i}^{(i)}$.
We choose an element $g_i \in G_i$ with $g_i \t S_1^{(i)}$, and define
\[
U = g_1^{-1}g_2^{-1}(g_1+g_2)U^{(1)} U^{(2)} \quad \text{and} \quad V_1 = g_1^{-1}g_2^{-1}(g_1+g_2)V_1^{(1)} V_1^{(2)} \,.
\]
Then $U, V_1 \in \mathcal A (G_1 \oplus G_2)$ and $U \t V_1 V_2^{(1)} \cdot \ldots \cdot V_{m_1}^{(1)} V_2^{(2)} \cdot \ldots \cdot V_{m_2}^{(2)}$, but $U$ divides no proper subproduct. For $W = U^{-1}V_1 V_2^{(1)} \cdot \ldots \cdot V_{m_1}^{(1)} V_2^{(2)} \cdot \ldots \cdot V_{m_2}^{(2)}$ we get
\[
W = U_2^{(1)} \cdot \ldots \cdot U_{\ell_1}^{(1)}U_2^{(2)} \cdot \ldots \cdot U_{\ell_2}^{(2)}
\]
and
\[
\mathsf L (W) = \mathsf L (U_2^{(1)} \cdot \ldots \cdot U_{\ell_1}^{(1)} ) + \mathsf L (U_2^{(2)} \cdot \ldots \cdot U_{\ell_2}^{(2)} ) \,.
\]
This show that $\min \mathsf L (W) = (\ell_1-1) + ( \ell_2-1)$. Summing up we obtain that
\[
\mathsf t (G_1 \oplus G_2) \ge \mathsf t (G_1 \oplus G_2, U) \ge \max \{m_1+m_2, \ell_1+\ell_2-1\} = \ell_1+\ell_2-1= \mathsf t (G_1) + \mathsf t (G_2) - 1 \,.
\]

\smallskip
2. Using 1. we infer that
\[
\begin{aligned}
\mathsf t (G_1 \oplus G_2) & \ge \mathsf t (G_1) + \mathsf t (G_2) - 1 \ge  \mathsf D (G_1) + \mathsf D (G_2) \\
 & \ge \mathsf d^* (G_1) + \mathsf d^* (G_2) + 2  = \mathsf d (G_1  \oplus G_2) + 2 = \mathsf D (G_1 \oplus G_2) + 1 \,.
\end{aligned} \qedhere
\]
\end{proof}

\medskip
{\it For the rest of this section,
let $H$ be a reduced  Krull monoid, $H \hookrightarrow \mathcal F (P)$ a divisor theory with class group $G$ and suppose that every class contains a prime divisor.}
\medskip

\medskip
\begin{lemma} \label{4.5}
Let $G = C_2^r$ with $r \ge 3$ and $\ell \in [1, r+1]$. Let $A_1, \ldots, A_{\ell}$ be
pairwise distinct zero-sum free sequences with $|A_i| = r$. Then
there exist some $k \in [0, \ell-1]$ and $U_1, \ldots , U_k \in
\mathcal A (G)$ with $U_1 \cdot \ldots \cdot U_k \mid A_1 \cdot
\ldots \cdot A_{\ell}$ such that
\[
|U_1 \cdot \ldots \cdot U_k| \ge 3(\ell-1) .
\]
\end{lemma}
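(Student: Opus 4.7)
Since $2g=0$ for every $g\in G=C_2^r$, no zero-sum free sequence over $G$ can contain a repeated element, so each $A_i$ may be identified with an $r$-element subset of $G\setminus\{0\}$; a length-$r$ zero-sum free sequence is precisely an $\F_2$-basis of $G$, because $\mathsf D(G)=r+1$. The atoms of $\mathcal A(G)$ are exactly the circuits of the $\F_2$-matroid on $G\setminus\{0\}$, i.e., the minimal $\F_2$-linearly dependent subsets. Thus my task is: given $\ell$ pairwise distinct bases $A_1,\ldots,A_\ell$ of the $r$-dimensional $\F_2$-vector space $G$, exhibit circuits whose multiset union divides $A_1\cdot\ldots\cdot A_\ell$, with total length at least $3(\ell-1)$ and cardinality at most $\ell-1$.

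I argue by induction on $\ell$. The case $\ell=1$ is immediate with $k=0$. For $\ell\ge 2$, the induction hypothesis applied to $A_1,\ldots,A_{\ell-1}$ yields $k'\in[0,\ell-2]$ and atoms $U_1,\ldots,U_{k'}$ with $U_1\cdot\ldots\cdot U_{k'}\mid A_1\cdot\ldots\cdot A_{\ell-1}$ and $|U_1\cdot\ldots\cdot U_{k'}|\ge 3(\ell-2)$. It suffices to produce one further atom $U_{k'+1}$ of length at least $3$ such that $U_1\cdot\ldots\cdot U_{k'+1}\mid A_1\cdot\ldots\cdot A_\ell$; this gives $k:=k'+1\le\ell-1$ atoms of total length at least $3(\ell-1)$.

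To build the extra atom I use Steinitz exchange. Because $A_\ell\ne A_1$, choose any $v\in A_\ell\setminus A_1$ and expand $v=\sum_{b\in S}b$ uniquely over the basis $A_1$. Since $v\notin A_1$ we have $|S|\ge 2$, so $U:=\{v\}\cup S$ is a circuit, hence an atom of length $|S|+1\ge 3$. The copy of $v$ needed for $U$ is drawn from $A_\ell$, which is untouched by $U_1,\ldots,U_{k'}$ (those atoms divide only $A_1\cdot\ldots\cdot A_{\ell-1}$), so that occurrence is free.

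The main obstacle is the remaining disjointness requirement: the elements of $S\subseteq A_1$ may already have been consumed by $U_1,\ldots,U_{k'}$, in which case the naive $U$ fails to divide the untouched part of $A_1\cdot\ldots\cdot A_\ell$. I plan to resolve this by exploiting two degrees of freedom in the construction: the anchor basis (any $A_j$ with $j<\ell$ can replace $A_1$, giving $\ell-1$ candidate anchors) and the element $v\in A_\ell\setminus A_j$ (at least $r-|A_\ell\cap A_j|$ candidates in each case). Using $|U_1\cdot\ldots\cdot U_{k'}|\le (\ell-2)\mathsf D(G)\le (\ell-2)(r+1)$ against the total supply $r(\ell-1)$ of positions in $A_1\cdot\ldots\cdot A_{\ell-1}$, a counting or Hall-type matching argument should produce a compatible pair $(j,v)$ for $\ell\le r+1$. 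If the plain induction proves technically tight, I would instead strengthen the inductive statement so that at every stage at least one basis $A_{j_0}$ remains entirely untouched; the reserved $A_{j_0}$ then serves as anchor for the next step and the disjointness verification becomes automatic.
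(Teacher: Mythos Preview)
The paper does not prove this lemma; it simply cites \cite[Lemma~6.6.5]{Ge-HK06a}. So there is no in-paper argument to compare against, and the question is whether your outline can be completed.

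Your identification of length-$r$ zero-sum free sequences with $\F_2$-bases and of atoms with circuits is correct, and the inductive scheme is natural. But the induction step, as you set it up, asks for $\ell-1$ atoms \emph{each of length at least $3$} dividing $A_1\cdots A_\ell$, and this is already impossible in small cases. Take $r=3$ and
\[
A_1=\{e_1,e_2,e_3\},\quad A_2=\{e_1{+}e_2,\,e_2,\,e_3\},\quad A_3=\{e_1{+}e_2{+}e_3,\,e_2,\,e_3\}.
\]
Then $A_1A_2A_3=e_1\,e_2^{\,3}\,e_3^{\,3}\,(e_1{+}e_2)\,(e_1{+}e_2{+}e_3)$, and the only atoms of length $\ge 3$ in this support are $\{e_1,e_2,e_1{+}e_2\}$, $\{e_3,e_1{+}e_2,e_1{+}e_2{+}e_3\}$, and $\{e_1,e_2,e_3,e_1{+}e_2{+}e_3\}$. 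Any two of these collide on one of the elements $e_1$, $e_1{+}e_2$, $e_1{+}e_2{+}e_3$, each of which has multiplicity~$1$ in $A_1A_2A_3$. Hence no two length-$\ge 3$ atoms divide $A_1A_2A_3$, so your inductive target of ``one further atom of length $\ge 3$'' cannot be met regardless of which anchor $(j,v)$ you select or which basis you reserve. The lemma still holds here, but only via one atom of length~$4$ together with one of length~$2$ (e.g., $\{e_1,e_2,e_3,e_1{+}e_2{+}e_3\}$ and $e_2^{\,2}$), which lies outside your scheme.

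This also explains why both proposed fixes stall. The counting route gives a surplus of only $r(\ell-1)-(\ell-2)(r+1)=r-\ell+2$ unconsumed positions, which is $1$ at $\ell=r+1$ and cannot guarantee an available $S$ of size $\ge 2$. The reserved-basis strengthening fails to propagate: after using $S\subset A_{j_0}$ and $v\in A_\ell$, neither $A_{j_0}$ nor $A_\ell$ is fully untouched, and weakening to ``missing at most one element'' runs into exactly the collision exhibited above. A correct argument must allow atoms of length~$2$ to participate (compensated by longer ones), or must proceed non-inductively; your plan as stated does not yet supply such a mechanism.
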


\begin{proof}
See \cite[Lemma 6.6.5]{Ge-HK06a}.
\end{proof}

\medskip
\begin{lemma} \label{4.6}
Let $G = C_2^r$ with $r \ge 3$, $n \ge 3$, and $u, v_1, \ldots , v_n \in \mathcal A (H)$ such
that $u \mid v_1 \cdot \ldots \cdot v_n$ and does not divide any proper subproduct. Furthermore, for every $i
\in [1,n]$, suppose that $v_i = s_i a_i$ with $a_i, s_i \in \mathcal F (P) \setminus \{1\}$
such that $u =  \prod_{i=1}^n s_i$ and set $w = u^{-1} v_1 \cdot
\ldots \cdot v_n$. Then $\min \mathsf L (w) \le \frac{n(r-1)+1}{2}$.
\end{lemma}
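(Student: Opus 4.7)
The plan is to transfer the problem to the block monoid $\mathcal B(G)$ via the homomorphism $\boldsymbol \beta$ of Lemma~\ref{3.2}, and then to reduce to Lemma~\ref{4.5}. Set $U = \boldsymbol \beta(u)$, $V_i = \boldsymbol \beta(v_i)$, $S_i = \boldsymbol \beta(s_i)$, $A_i = \boldsymbol \beta(a_i)$, and $W = \boldsymbol \beta(w) = A_1 \cdot \ldots \cdot A_n$. Then $U, V_1, \ldots, V_n \in \mathcal A(G)$, every $V_i = S_i A_i$ with $S_i, A_i$ nontrivial, each $A_i$ is zero-sum free (being a proper subsequence of the atom $V_i$), and $\mathsf L(w) = \mathsf L(W)$ since transfer homomorphisms preserve sets of lengths.

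The key structural input comes from the minimality of $U = S_1 \cdot \ldots \cdot S_n$. Set $g_i = \sigma(A_i) \in G \setminus \{0\}$; in $C_2^r$ one has $g_i = \sigma(S_i)$. I claim that $g_1 \cdot \ldots \cdot g_n$ is itself a minimal zero-sum sequence in $\mathcal B(G)$: its sum equals $\sigma(U) = 0$, and if some $\emptyset \ne I \subsetneq [1,n]$ satisfied $\sum_{i \in I} g_i = 0$, then $\prod_{i \in I} S_i$ would be a proper nonempty zero-sum subsequence of $U$, contradicting $U \in \mathcal A(G)$. Since every minimal zero-sum sequence of length $\ge 2$ over $C_2^r$ has pairwise distinct terms (two equal terms already form a zero-sum pair), the $g_i$ are pairwise distinct. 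This forces the $A_i$ to be pairwise distinct as sequences and yields $n \le \mathsf D(C_2^r) = r+1$.

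Let $p = |\{i \in [1,n] : |A_i| = r\}|$ and, after relabeling, assume $|A_1| = \ldots = |A_p| = r$. Since each $A_i$ is zero-sum free, $0 \notin \supp(W)$, and in any factorization of $W$ into atoms of $\mathcal B(G)$ every atom has length at least $2$; therefore $\min \mathsf L(W) \le |W|/2$. If $p \le 1$, then $|W| \le r + (n-1)(r-1) = n(r-1)+1$ and this trivial bound already gives the conclusion.

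The remaining case $p \ge 2$ is where Lemma~\ref{4.5} is used. The sequences $A_1, \ldots, A_p$ are pairwise distinct zero-sum free sequences of length $r$ with $2 \le p \le n \le r+1$, so Lemma~\ref{4.5} supplies $k \in [0, p-1]$ and atoms $U_1, \ldots, U_k \in \mathcal A(G)$ with $U_1 \cdot \ldots \cdot U_k \mid A_1 \cdot \ldots \cdot A_p \mid W$ and $|U_1 \cdot \ldots \cdot U_k| \ge 3(p-1)$. Setting $W' = (U_1 \cdot \ldots \cdot U_k)^{-1}W \in \mathcal B(G)$ (still with $0 \notin W'$, so $\min \mathsf L(W') \le |W'|/2$) one obtains
\[
\min \mathsf L(W) \;\le\; k + \tfrac{|W| - |U_1 \cdot \ldots \cdot U_k|}{2} \;\le\; (p-1) + \tfrac{|W| - 3(p-1)}{2} \;=\; \tfrac{|W| - (p-1)}{2} \,.
\]
Since $|W| \le pr + (n-p)(r-1) = nr - n + p$, this last expression is at most $(n(r-1)+1)/2$, as required. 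The main obstacle is the structural reduction in the second paragraph: extracting from the minimality of $u$ both the sharp cap $n \le r+1$ and the pairwise distinctness of the $A_i$, which are precisely the hypotheses needed to feed the length-$r$ sub-family into Lemma~\ref{4.5}.
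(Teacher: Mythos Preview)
Your argument is correct and follows essentially the same route as the paper's own proof: transfer to $\mathcal B(G)$, show the relevant $A_i$ are pairwise distinct, and then apply Lemma~\ref{4.5} to the length-$r$ ones to extract enough long atoms so that the residual $W'$ is short. Two small remarks: you should write $\widetilde{\boldsymbol\beta}(s_i)$ and $\widetilde{\boldsymbol\beta}(a_i)$ rather than $\boldsymbol\beta(s_i)$, $\boldsymbol\beta(a_i)$, since $s_i,a_i\in\mathcal F(P)\setminus H$; and your explicit derivation of $n\le r+1$ (needed to invoke Lemma~\ref{4.5}) via the minimal zero-sum sequence $g_1\cdots g_n$ is a slightly cleaner packaging than the paper's, which argues the distinctness of the length-$r$ $A_i$ directly and leaves $n\le r+1$ implicit.
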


\begin{proof}
We set $U = \boldsymbol \beta (u)$, $W = \boldsymbol \beta (w)$, $V_i = \boldsymbol \beta (v_i)$, $S_i = \widetilde{\boldsymbol \beta} (s_i)$ and $A_i = \widetilde{\boldsymbol \beta} (a_i)$ for all $i \in [1,n]$. It is sufficient to verify the upper bound for $\min \mathsf L (W)$.
After renumbering if necessary there is some $\ell  \in [0,n]$ such that $|A_1| = \ldots = |A_{\ell}| = r$ and $|A_i| \le r-1$ for all $i
\in [\ell+1, n]$. If $\ell = 0$, then $\min \mathsf L (W) \le |W|/2 = n(r-1)/2$. Let $\ell \ge 1$, and assume to the contrary that there are distinct $i, j \in [1, \ell]$ such that $A_i = A_j$, say $i=1$ and $j=2$. Then
\[
\sigma (S_1) = \sigma (A_1) = \sigma (A_2) = \sigma (S_2) \,,
\]
hence $u = s_1s_2$ and $n \le 2$, a contradiction. Thus $A_1, \ldots, A_{\ell}$ are pairwise distinct zero-sum free sequences.
By  Lemma \ref{4.5} there exist some $k \in [0,
\ell-1]$ and $U_1, \ldots , U_k \in \mathcal A (G)$ such that
\[
U_1 \cdot \ldots \cdot U_k \mid A_1 \cdot \ldots \cdot A_{\ell}
\quad \text{and} \quad
|U_1 \cdot \ldots \cdot U_k| \ge 3(\ell-1) .
\]
Setting $W' = (U_1 \cdot \ldots \cdot U_k)^{-1}W$ we infer that
\[
|W'| \le |W| - 3(\ell-1) \le \ell r + (n-\ell)(r-1) -3(\ell-1) =
n(r-1) - 2(\ell-1) +1 .
\]
Thus $W$ has a factorization of length at most
\[
k + \frac{|W'|}{2} \le \frac12 \Big( n(r-1) + 2(k-(\ell-1))+1 \Big)
\le \frac{n(r-1)+1}{2} \,. \qedhere
\]
\end{proof}

\medskip
\begin{lemma} \label{4.7}
Let $G = C_2^r$ with $r \ge 3$, and suppose that $\mathsf t (G) \ge 2 + \frac{r(r-1)}{2}$. Then there are $U, V_1, \ldots , V_{r+1} \in \mathcal A (G)$, where $U \t V_1 \cdot \ldots \cdot V_{r+1}$ but $U$ divides no proper subproduct, such that the   following properties are satisfied{\rm \,:}
\begin{enumerate}
\item[(a)] $U = e_1 \cdot \ldots \cdot e_{r+1}$, and $V_i = e_i A_i$ where $A_i \in \mathcal F (G)$ and $e_i = \gcd (U, V_i)$ for all $i \in [1,r+1]$.

\smallskip
\item[(b)] $A_i A_j$ is not zero-sum free for all $i, j \in [1, r+1]$ distinct.

\smallskip
\item[(c)] For $W = U^{-1}V_1 \cdot \ldots \cdot V_{r+1}$ we have $\gcd (U, W) = 1$ and $\mathsf t (G) = \mathsf t (G, U) = 1 + \min \mathsf L (W)$.
\end{enumerate}
\end{lemma}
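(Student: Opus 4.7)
Plan. The plan is to choose an atom $U \in \mathcal{A}(G)$ realizing $\mathsf{t}(G,U) = \mathsf{t}(G)$ together with a witness of atoms $V_1, \ldots, V_n \in \mathcal{A}(G)$ such that $U \mid V_1 \cdots V_n$ with $U$ dividing no proper subproduct, and $\mathsf{t}(G,U) = \max\{\ell, n\}$ with $\ell = 1 + \min \mathsf{L}(W)$ for $W = U^{-1}V_1 \cdots V_n$. Writing $u = s_1 \cdots s_n$ with each $s_i \ne 1$, one has $n \le |u| \le \mathsf{D}(G) = r+1$. The hypothesis $\mathsf{t}(G) > r+1 \ge n$ forces $\ell > n$, hence $\mathsf{t}(G) - 1 = \min \mathsf{L}(W)$; Lemma~\ref{4.6} then gives $\min \mathsf{L}(W) \le (n(r-1)+1)/2$, which combined with the hypothesis yields $n(r-1) \ge r(r-1)+1$, so $n = r+1 = |u|$ and each $|s_i| = 1$. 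Since $|U| = r+1 \ge 4$ and atoms of length $\ge 3$ in $C_2^r$ are squarefree, one has $U = e_1 \cdots e_{r+1}$ with distinct $e_i$; relabel so that $s_i = e_i$. The ``no proper subproduct'' condition implies that for each $i$ some $e_{\tau(i)}$ occurs only in $V_i$; pigeonhole makes $\tau$ a bijection and a further relabeling gives $\tau = \mathrm{id}$. Then $\gcd(U,V_i) = e_i$, giving (a); writing $V_i = e_iA_i$ with $A_i$ nonempty, no $A_i$ contains any $e_k$ with $k \ne i$.

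For (b), I would assume for contradiction that $A_iA_j$ is zero-sum-free for some distinct $i, j$. Then $\sigma(A_iA_j) = e_i + e_j$ in $C_2^r$, and I claim $V^* := (e_i+e_j)A_iA_j$ is an atom: any proper nonempty zero-sum subseq must contain $e_i+e_j$, and its complement within $A_iA_j$ would then be a zero-sum subseq of $A_iA_j$, contradicting its zero-sum-freeness. Similarly, using $\sum_k e_k = 0$ and the atomicity of $U$, one verifies that $U^{\#} := (e_i+e_j)\prod_{k \ne i,j} e_k$ is an atom of length $r$: a proper nonempty zero-sum subseq of $U^\#$ either lies inside $\{e_k : k \ne i,j\}$ (a zero-sum subseq of $U$) or contains $e_i+e_j$ (and, after adjoining $e_ie_j$, yields a zero-sum subseq of $U$), both forbidden. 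A direct computation gives $V^* \prod_{k \ne i,j} V_k = U^\# \cdot W$, and $U^\#$ divides this product but no proper subproduct (removing $V^*$ loses $e_i+e_j$; removing $V_k$ loses $e_k$). Hence $(V^*, \{V_k\}_{k \ne i,j})$ is a witness for $U^\#$ with $n' = r$ factors and associated $W^\# = W$, giving $\mathsf{t}(G, U^\#) \ge 1 + \min \mathsf{L}(W) = \mathsf{t}(G)$. On the other hand, Lemma~\ref{4.6} applied to $U^\#$ bounds every witness by $n' \le |U^\#| = r$ and yields $\mathsf{t}(G, U^\#) \le 1 + (r(r-1)+1)/2 = (r^2-r+3)/2 < 2 + r(r-1)/2 \le \mathsf{t}(G)$, a contradiction.

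For (c), it suffices to show $|V_i| \ge 3$ for all $i$: then each $V_i$ is squarefree, so $e_i \notin A_i$, and combined with (a) we get $\gcd(U,W) = 1$. Suppose $V_i = e_i^2$ for some $i$ (the only form a length-$2$ atom in $C_2^r$ can take given the relabeling); then $A_i = e_i$, so $e_i$ divides $W$ and, moreover, has odd multiplicity in $W$ since no $A_k$ with $k \ne i$ contains $e_i$. The plan is to exploit this parity obstruction, which in the tight version of Lemma~\ref{4.6} (where the residual $W'$ is required to factor into length-$2$ atoms, hence with even multiplicities) forces additional factorization length; using this combined with a construction parallel to $V^*, U^\#$ in the (b) argument, I produce an alternative atom $U^{\#\#}$ together with a witness giving $\mathsf{t}(G, U^{\#\#}) \ge \mathsf{t}(G)$, contradicting the Lemma~\ref{4.6} bound on $\mathsf{t}(G, U^{\#\#})$ exactly as in (b). The remaining identity $\mathsf{t}(G) = \mathsf{t}(G,U) = 1 + \min \mathsf{L}(W)$ then holds by the choice of $(U, (V_i))$.

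The principal obstacle is step (c): identifying the correct atom $U^{\#\#}$ and the auxiliary witness when $V_i = e_i^2$. The (b) argument provides the template of constructing a shorter-supported atom together with a witness of $n' \le r$ factors, but the specific combinatorial construction in (c) must leverage the odd-multiplicity of $e_i$ in $W$ and the tightness constraints on the minimum-length factorization of $W$ to produce the required contradiction.
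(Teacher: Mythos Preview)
Your handling of (a) and (b) is correct and essentially matches the paper's argument. One minor remark on (a): once you relabel so that $s_i = e_i$, the $\tau$ step is redundant --- the ``no proper subproduct'' condition applied to the omission of $V_i$ directly forces $e_i \nmid \prod_{j\ne i} V_j$, hence $e_i \nmid A_j$ for all $j\ne i$, and by symmetry $e_j \nmid A_i$ for $j\ne i$; this already gives $\gcd(U,V_i)=e_i$. For (b), your construction $U^\#=(e_i+e_j)\prod_{k\ne i,j}e_k$ and $V^*=(e_i+e_j)A_iA_j$ is exactly the paper's $\overline U$ and $\overline{V_2}$, and the contradiction via Lemma~\ref{4.6} with $n'=r$ is the same.

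The genuine gap is (c), which you yourself flag as ``the principal obstacle.'' Your plan to mimic the (b) template by constructing some $U^{\#\#}$ of length $\le r$ together with a witness yielding the same $W$ is not what the paper does, and it is unclear that such a merging construction exists in this case: when $V_i=e_i^2$ there is no obvious way to absorb the two copies of $e_i$ into a single new atom while simultaneously shortening $U$. The paper proceeds quite differently. First, it uses (b) --- already established --- to show that at most one index $i$ can satisfy $e_i\mid A_i$ (since $e_i\mid A_i$ forces $V_i=e_i^2$ and $A_i=e_i$, and two such indices $i,j$ would give $A_iA_j=e_ie_j$ zero-sum free, contradicting (b)). Assuming exactly one such index, say $i=r+1$, one then has $V_{r+1}=e_{r+1}^2$ and $A_{r+1}=e_{r+1}$, so $e_{r+1}$ occurs exactly once in $W$. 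Now apply Lemma~\ref{4.5} to the pairwise distinct zero-sum free $A_1,\ldots,A_\ell$ of length $r$ (those among $A_1,\ldots,A_r$) to extract atoms $U_1,\ldots,U_k$ with $|U_1\cdots U_k|\ge 3(\ell-1)$; the residual $W'=(U_1\cdots U_k)^{-1}W$ still contains $e_{r+1}$ exactly once, so any atom $U_{k+1}\mid W'$ containing $e_{r+1}$ has length $\ge 3$ (length-$2$ atoms in $C_2^r$ are squares). A direct count then gives a factorization of $W$ of length at most $r(r-1)/2$, contradicting $\min\mathsf L(W)\ge 1+r(r-1)/2$. This is the missing argument you need to supply.
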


\begin{proof}
Let $U \in \mathcal A (G)$ with $\mathsf t (G) = \mathsf t (G, U) \ge 2 + \frac{r(r-1)}{2}$. Then there are $V_1, \ldots, V_m \in \mathcal A (G)$ with $U \t V_1 \cdot \ldots \cdot V_m$, $U \nmid \prod_{i \in I}V_i$ for any $I \subsetneq [1,m]$, and such that
\[
\mathsf t (G, U) = 1 + \min \mathsf L (W) \,.
\]
Then Lemma \ref{4.6} (applied with $H = \mathcal B (G)$) implies that
\[
1 + \frac{r(r-1)}{2} \le \min \mathsf L (W) \le \frac{m(r-1)+1}{2} \,,
\]
and hence $m = r+1 = |U|$. Therefore we may assume that Property (a) holds.

Assume to the contrary that Property (b) fails. Then there exist some $i, j \in [1, r+1]$
distinct such that $A_i A_j$ is zero-sum free, say $i=1$ and $j=2$.
We set
\[
\overline U = (e_2+e_1) \prod_{i=3}^{r+1} e_i \quad \text{and}
\quad
\overline{V_2} = (e_2+e_1) A_2 A_1 .
\]
Then $\overline U, \overline{V_2} \in \mathcal A (G)$, $\overline
U \mid \overline{V_2} V_3 \cdot \ldots \cdot V_{r+1}$ and
\[
\overline{ U}^{-1} \overline{V_2}V_3 \cdot \ldots \cdot V_{r+1}
= W = U^{-1} V_1 \cdot \ldots \cdot V_{r+1} .
\]
Thus Lemma \ref{4.6} implies that $\min \mathsf L (W) \le
\frac{r(r-1)}{2}$, a contradiction.
Finally we assume to the contrary that Property (c) fails.  This means that the set $I \subset [1, r+1]$, defined as
\[
\prod_{i \in I} e_i = \gcd (U,W) \,,
\]
is nonempty. Let $i \in I$. Then $e_i$
divides $W = A_1 \cdot \ldots \cdot A_{r+1}$. If $e_i \mid
A_i^{-1} W$, then $U \mid \prod_{j \in [1, r+1] \setminus \{i\}}
V_j$, a contradiction. Thus $e_i \mid A_i$ whence $V_i = e_i^2$.
If there would exist $i, j \in I$  distinct, then   $A_i A_j = e_i e_j$ would be
zero-sum free. This implies that $|I|
\le 1$.

Since we assumed $I$ to be nonempty, we get that  $|I| = 1$, say $I = \{r+1\}$. Then for $i \in [1, r]$
we have
\[
V_i = e_i A_i \quad \text{and} \quad V_{r+1} = e_{r+1}^2 \,.
\]
After renumbering if necessary,  we may suppose that for some $\ell \in [0,r]$ we
have $|A_1| = \ldots = |A_{\ell}| = r$ and $|A_i| \le r-1$ for all $i
\in [\ell+1, r]$.
 If $\ell = 0$, then $\min \mathsf L (W) \le |W|/2 \le (1 + r(r-1))/2$, a contradiction. Suppose that $\ell \ge 1$.
By  Lemma  \ref{4.5}, there exists some $k \in [1,
\ell-1]$ and $U_1, \ldots , U_k \in \mathcal A (G)$ such that
\[
U_1 \cdot \ldots \cdot U_k \mid A_1 \cdot \ldots \cdot A_{\ell} \quad
\text{and} \quad |U_1 \cdot \ldots \cdot U_k| \ge 3(\ell-1) .
\]
Setting $W' = (U_1 \cdot \ldots \cdot U_k)^{-1} W$ we infer that
\[
|W'| \le |W| - 3(\ell-1) \le \ell  r + (r-\ell)(r-1) + 1 - 3(\ell-1) = r(r-1)
-2\ell + 4 .
\]
Let $W' = U_{k+1} \cdot W''$ with $U_{k+1} \in \mathcal A (G)$ and
$e_{r+1} \mid U_{k+1}$. Since $U \nmid \prod_{i=1}^r V_i$,
$e_{r+1}$ occurs exactly once in $W'$ which implies that
$|U_{k+1}| \ge 3$ and $|W''| = |W'| - |U_{k+1}| \le r(r-1) -2 \ell +
1$. Thus $W$ has a factorization with length at most
\[
k+1 + \frac{|W''|}{2} = \frac{1}{2} \left( r(r-1) + 2(k+1-\ell) + 1
\right) \le \frac{r(r-1)}{2} + \frac{1}{2}
\]
whence $\min \mathsf L (W) \le \frac{r(r-1)}{2}$, a contradiction.
\end{proof}

\medskip
\begin{lemma} \label{4.8}
Let $G = C_2^r$ with $r \in \N$.
\begin{enumerate}
\item $\mathsf t (G) = \mathsf D (G)$ if and only if $r \in [2, 3]$.

\smallskip
\item If $r=2$, then $\mathsf t (H)=\mathsf D (G)$.

\smallskip
\item If $r=3$ and if there is a nontrivial class containing at least two distinct prime divisors, then $\mathsf t (H) = \mathsf D (G)+1$.
\end{enumerate}
\end{lemma}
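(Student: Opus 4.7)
I would split the lemma into its three items and treat each with a different mix of the tools already developed.

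\emph{Part~1.} For $G=C_2^r$ one has $\mathsf{D}(G)=r+1$, and Proposition~\ref{3.3}.4 already yields $\mathsf{t}(C_2^r)\ge r+1$ for $r\ge 2$, so both directions reduce to deciding when this bound is sharp. For $r=2$ I would just enumerate the atoms of $\mathcal{B}(C_2^2)$ (namely $g^2$ and $e_1e_2(e_1+e_2)$) and exploit the fact that the zero-sum-free length-$2$ sequence over $C_2^2$ summing to a prescribed nonzero element is unique; a short case analysis on $|U|\in\{2,3\}$ then gives $\mathsf{t}(C_2^2,U)\le 3$ for every atom $U$. For $r=3$ the combinatorial heart is as follows: assume $\mathsf{t}(C_2^3)\ge 5=2+r(r-1)/2$; Lemma~\ref{4.7} supplies atoms $U=e_1e_2e_3e_4$ and $V_i=e_iA_i$ with $\gcd(U,W)=1$, so each $A_i$ is supported in $G\setminus\{0,e_1,e_2,e_3,e_4\}=\{e_1+e_2,\,e_1+e_3,\,e_2+e_3\}$. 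Writing $A_i=(e_1+e_2)^{a}(e_1+e_3)^{b}(e_2+e_3)^{c}$, its image sum in coordinates is $(a+b,\,a+c,\,b+c)\bmod 2$, whose three entries always have even total, contradicting $\sigma(A_i)=e_i$. This parity step is the crux of Part~1.

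For the converse ($r\ge 4$) I would split on parity of $r$. For $r$ even Lemma~\ref{4.2}.2 already gives $\mathsf{t}(C_2^r)\ge 1+r^2/2>r+1$; for $r\ge 5$ odd I use the monotonicity $\mathsf{t}(C_2^r)\ge\mathsf{t}(C_2^{r-1})$ (because $\mathcal{B}(C_2^{r-1})$ is a divisor-closed submonoid of $\mathcal{B}(C_2^r)$, so the factorizations of its elements and the relevant tame-degree witnesses agree in either monoid), which reduces the claim to the even case $r-1\ge 4$.

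\emph{Part~2.} Proposition~\ref{3.3}.3 gives $\mathsf{t}(H)\ge\mathsf{t}(G)=3$. For the upper bound, Proposition~\ref{3.3}.2 gives for every atom $u$
\[
\mathsf{t}(H,u)\le\max\Bigl\{\mathsf{t}(G),\,\tfrac{3+(|u|-1)(\mathsf{D}(G)-1)}{2}\Bigr\}\le\max\Bigl\{3,\,\tfrac{2|u|+1}{2}\Bigr\};
\]
with $|u|\le\mathsf{D}(G)=3$ the second term is at most $7/2$, so $\mathsf{t}(H,u)\le 3$.

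\emph{Part~3.} For the upper bound $\mathsf{t}(H)\le 5$, I would apply Lemma~\ref{4.6}: a witness $u\mid v_1\cdots v_m$ with $m\ge 3$ and $m\le|u|\le\mathsf{D}(G)=4$ gives $\min\mathsf{L}(w)\le(m(r-1)+1)/2\le 9/2$, so $1+\min\mathsf{L}(w)\le 5$; the cases $m\le 2$ are immediate from $|w|\le|v_1|+|v_2|-|u|\le 4$. The lower bound $\mathsf{t}(H)\ge 5$ is the main obstacle, since Part~1 gives only $\mathsf{t}(G)=4$; the additional prime divisor must be exploited. The plan is to construct an explicit witness in $H$: fix a basis $e_1,e_2,e_3$ and put $e_4=e_1+e_2+e_3$; after relabelling, let $[e_1]$ be the class with two distinct prime divisors $p_1,q_1$, and let $p_g$ denote a prime in $[g]$ for every $g\notin\{0,e_1\}$. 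Take $u=p_1p_{e_2}p_{e_3}p_{e_4}$ and build atoms $v_i=p_{e_i}A_i$ chosen so that (a)~each $v_i$ contains only the prime $p_{e_i}$ among $\{p_1,p_{e_2},p_{e_3},p_{e_4}\}$ (ensuring $u$ divides $v_1\cdots v_4$ but no proper subproduct), and (b)~$w=A_1A_2A_3A_4$ has $\min\mathsf{L}_H(w)\ge 4$. The role of $q_1$ is precisely to allow the $A_i$ to achieve $\sigma(A_i)=e_i$ without being restricted to the set $\{e_1+e_2,\,e_1+e_3,\,e_2+e_3\}$ on which the parity obstruction of Part~1 operates; arranging the $A_i$ so that $\beta(w)$ avoids the short atoms of $\mathcal{B}(G)$ that would allow a $3$-atom factorization (and using $\min\mathsf{L}_H(w)=\min\mathsf{L}_{\mathcal{B}(G)}(\beta(w))$ via the transfer property of $\boldsymbol{\beta}$) is the main technical step I would have to carry out.
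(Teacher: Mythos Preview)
Your plan matches the paper's proof closely for Parts~1 and~2 (including the parity obstruction via Lemma~\ref{4.7} for $r=3$ and the reduction to even $r$ via Lemma~\ref{4.2}.2 for $r\ge 4$). Two small points: you should say a word about $r=1$ (where $\mathcal B(C_2)$ is factorial, so $\mathsf t(G)=0\ne 2=\mathsf D(G)$), and in your $m\le 2$ estimate in Part~3 the bound $|w|\le 4$ is not right in general---one only has $|w|\le |v_1|+|v_2|-|u|\le 4+4-2=6$---but this still gives $\min\mathsf L(w)\le 3$ and hence a contribution $\le 5$, so the conclusion stands.

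For the \emph{upper bound} in Part~3 your route through Lemma~\ref{4.6} is actually cleaner than what the paper does: the paper argues by contradiction, assuming $\mathsf t(H)\ge 6$, deduces $|U|=4$ and $|W|\in[10,12]$, and then applies Lemma~\ref{4.5} directly to the $A_i$ of length $3$ to squeeze $\min\mathsf L(W)$ below $5$. Your single invocation of Lemma~\ref{4.6} for $m\in\{3,4\}$ gives $\min\mathsf L(w)\le\lfloor(2m+1)/2\rfloor\le 4$ immediately, which is shorter.

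For the \emph{lower bound} in Part~3 your plan is sound and can be completed along the lines you sketch, but you should actually write down a witness; the paper does not take $U=e_1e_2e_3e_4$ but rather chooses $u$ with $\boldsymbol\beta(u)=(e_1{+}e_2{+}e_3)(e_1{+}e_3)(e_2{+}e_3)e_3$ and uses the second prime $p_3'$ in class $e_3$ to set $v_1=q_1p_1p_2p_3'$, $v_2=q_2p_1p_3'$, $v_3=q_3p_2p_3'$, $v_4=p_3^2$, so that $\boldsymbol\beta(w)=e_1^2e_2^2e_3^4$ has support on an independent set and hence unique factorization $\mathsf L(w)=\{4\}$. Your alternative setup with $U=e_1e_2e_3e_4$ also works: taking $A_1=e_1$ and $A_2,A_3,A_4$ each of length $3$ supported on $\{e_1,e_1{+}e_2,e_1{+}e_3,e_2{+}e_3\}$ (using the spare prime $q_1$ for the $e_1$-entries) yields $\boldsymbol\beta(w)=e_1^4(e_1{+}e_2)^2(e_1{+}e_3)^2(e_2{+}e_3)^2$, whose support admits no atom of length $4$, forcing $\min\mathsf L(w)=4$. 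Either way, the construction is short once written out, and you should include it rather than leave it as a ``technical step to carry out.''
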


\begin{proof}
We proceed in four steps, distinguishing the cases $r=1$, $r=2$, $r=3$, and $r \ge 4$.

(i) If $r=1$, then $\mathcal B (G)$ is factorial and hence $\mathsf t (G) = 0 < \mathsf D (G) = 2$.

\smallskip
(ii) Suppose that  $r = 2$. Then $\mathsf D (G) = 3$, and and by Proposition \ref{3.3} we have
\[
3 \le \mathsf t (H) \le  \max \left\{ \mathsf t (G), \Big\lfloor \frac{3+(\mathsf D (G_P)-1)^2}{2} \Big\rfloor \right\} = \max \{ \mathsf t (G), 3 \} \,,
\]
and hence it suffices to verify that $\mathsf t (G) \le 3$.
This can be done by  a quick direct check.

\smallskip
(iii) Suppose that $r=3$. To show the statement on $\mathsf t (G)$, we assume to the contrary that $\mathsf t (G) \ge \mathsf D (G)+1 = r+2=5 = 2 + \frac{3 \cdot 2}{2}$. Then let $U, V_1, \ldots, V_5$ have all the properties of Lemma \ref{4.7}, and use all the notations of that lemma. In particular, we have $U = e_1 \cdot \ldots \cdot e_4$. Then $(e_1, e_2, e_3)$ is a basis of $G$, $e_4=e_1+e_2+e_3$, and
$
G = \{0, e_1, e_2, e_3, e_1+e_2+e_3, e_2+e_3, e_1+e_3, e_1+e_2 \}$.
Since $\gcd (U,W)=1$, it follows that
\[
\supp (A_1) \subset \supp (W) \subset \{e_2+e_3, e_1+e_3, e_1+e_2 \} \,.
\]
On the other hand, $A_1$ is zero-sum free with $\sigma (A_1) = e_1$ and with $|A_1| \in [2,3]$, a contradiction.

Now suppose that there is a nontrivial class containing at least two distinct prime divisors. First we show that $\mathsf t (H) \ge 5$. Let $(e_1, e_2, e_3)$ be a basis of $G$ and let $p_i \in P \cap e_i$ for all $i \in [1,3]$, and let $p_3' \in P \cap e_3$ with $p_3' \ne p_3$. Let $u = q_1q_2q_3p_3 \in \mathcal A (H)$ with   $\boldsymbol \beta (u) = (e_1+e_2+e_3)(e_1+e_3)(e_2+e_3)e_3$ such that $q_1 \in P \cap (e_1+e_2+e_3), q_2 \in P \cap (e_1+e_3)$, and $q_3 \in P \cap (e_2+e_3)$. Now we define
\[
v_1 = q_1p_1p_2p_3', \  v_2 = q_2 p_1p_3',\  v_3 = q_3p_2p_3', \quad  \text{and} \quad v_4 = p_3p_3 \,.
\]
Then $v_1, v_2, v_3, v_4 \in \mathcal A (H)$, $u \t v_1v_2v_3v_4$, but $u$ does not divide any proper subproduct. Since we have $\mathsf L (u^{-1}v_1v_2v_3v_4) = \{4\}$, it follows that
\[
\mathsf t (H,u) \ge \mathsf t (a,u) \ge \max \{4, 1 + \min \mathsf L (u^{-1}a)\} = 5 = \mathsf D (G)+1 \,.
\]

Assume to the contrary that $\mathsf t (H) > 5$. We choose $a \in H$ and $u \in \mathcal A (H)$ such that $\mathsf t (H) = \mathsf t (a, u) \ge 6$. Then there are $u_2, \ldots, u_{\ell}, v_1, \ldots, v_m \in \mathcal A (H)$ such that $u \t v_1 \cdot \ldots \cdot v_m$, but $u$ divides no proper subproduct, and $\max \{\ell, m \} = \mathsf t (a, u)$. We set $U = \boldsymbol \beta (u)$, $w = u^{-1}a$, and $W = \boldsymbol \beta (W)$. Since $m \le |U| \le \mathsf D (G) = 4$, it follows that $\mathsf t (a,u) = \ell = 1 + \min \mathsf L (W) \ge 6$.
From this we get that $|U| = 4$ and $|W| \in [10,12]$. Then for every $i \in [1,4]$, there are $p_i \in P$ and $a_i \in F \setminus \{1\}$ such that $v_i = p_ia_i$ and $u = p_1p_2p_3p_4$.
We set $A_i = \boldsymbol \beta (a_i)$ for all $i \in [1,4]$, and after renumbering if necessary there is an $s \in [0,4]$ such that $|A_1| = \ldots = |A_s| = 3$, and $3 > |A_{s+1}| \ge \ldots \ge |A_4|$. Note that $W = A_1 \cdot \ldots \cdot A_4$, and since $\sigma (A_1), \ldots, \sigma (A_4)$ are pairwise distinct, the sequences $A_1, \ldots, A_4$ are pairwise distinct.
Since $|A_1 \cdot \ldots \cdot A_4|=|W| \in [10,12]$, $|W| = 10$ implies $s \ge 2$, $|W|=11$ implies $s \ge 3$, and $|W|=12$ implies $s=4$.
By Lemma \ref{4.5} there exist $k \in [0, s-1]$ and $W_1, \ldots, W_k \in \mathcal A (G)$ such that $W_1 \cdot \ldots \cdot W_k \t A_1 \cdot \ldots \cdot A_s$ and $|W_1 \cdot \ldots \cdot W_k| \ge 3(s-1)$. This implies that
\[
5 \le \min \mathsf L (W) \le k + \frac{|W|-|W_1\cdot \ldots \cdot W_k|}{2} \le (s-1) + \frac{|W| - 3(s-1)}{2} = \frac{|W|-(s-1)}{2} \,,
\]
a contradiction.

\smallskip
(iv)  Suppose that $r \ge 4$. If $r \ge 4$ is even, then Lemma \ref{4.2}.2 shows that
\[
\mathsf t (G) \ge 1 + \frac{r^2}{2} > r+1 = \mathsf D (G) \,.
\]
If $r \ge 5$ is odd, then again by Lemma \ref{4.2}.2 we get that
\[
\mathsf t (G) \ge \mathsf t (C_2^{r-1}) \ge 1 + \frac{(r-1)^2}{2} > r+1 = \mathsf D (G) \,. \qedhere
\]
\end{proof}

\medskip
\begin{lemma} \label{4.9}
Let $G = C_3^r$ with $r \in \N$. Then $\mathsf t (H) = \mathsf D (G)$ if and only if $r = 1$.
\end{lemma}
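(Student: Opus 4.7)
I treat $r = 1$ and $r \ge 2$ separately. For $r = 1$ we have $G = C_3$ with $\mathsf D(G) = 3$; the lower bound $\mathsf t(H) \ge \mathsf D(G) = 3$ is immediate from Proposition~\ref{3.3}, while Proposition~\ref{3.3}.3 gives $\mathsf t(H) \le \max\{\mathsf t(G), (3+(\mathsf D(G)-1)^2)/2\} = \max\{\mathsf t(G), 7/2\}$. Since $\mathsf t(H) \in \N_0$ it suffices to verify $\mathsf t(C_3) \le 3$, which follows by direct inspection of the three nonzero atoms $(1)^3$, $(2)^3$, $(1)(2)$ of $\mathcal B(C_3)$ along the lines of Lemma~\ref{4.8}(ii).

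For $r \ge 2$, Proposition~\ref{3.3}.3 gives $\mathsf t(H) \ge \mathsf t(C_3^r)$, so it suffices to prove $\mathsf t(C_3^r) > \mathsf D(C_3^r) = 2r+1$. I bootstrap from the base case $r = 2$ via Proposition~\ref{4.4}.2: with $G_1 = C_3^2$ and $G_2 = C_3^{r-2}$ one has $\mathsf d(G_1 \oplus G_2) = 2r = 4 + 2(r-2) = \mathsf d^*(G_1) + \mathsf d^*(G_2)$, so once $\mathsf t(C_3^2) > 5$ is established the conclusion propagates to all $r \ge 3$. For the base case $r = 2$, fix a basis $(e_1,e_2)$ of $C_3^2$, consider the $\mathsf D$-atom $U = e_1^2 e_2^2 (e_1+e_2)$ and the five length-$5$ sequences
\[
V_1 = V_2 = e_1 (2e_1+2e_2)^2 (2e_1+e_2)^2, \quad V_3 = V_4 = e_2 (2e_1+2e_2)^2 (e_1+2e_2)^2,
\]
\[
V_5 = (e_1+e_2)(2e_1)^2 (2e_1+e_2)^2 .
\]
A routine subsum check confirms atomicity of each $V_i$ (the key point being that each tail $A_i = S_i^{-1}V_i$ is zero-sum free, $\sigma(A_i) = -\sigma(S_i)$, and no proper subset of $A_i$ sums to $-\sigma(S_i)$), and since each $V_i$ contributes exactly one prescribed element of $\supp(U) = \{e_1,e_2,e_1+e_2\}$, the multiplicities of $\supp(U)$ in $V_1 \cdots V_5$ match those of $U$ exactly, forcing $U \mid V_1 \cdots V_5$ minimally.

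It remains to show that $W = U^{-1} V_1 \cdots V_5 = (2e_1)^2 (2e_1+e_2)^6 (e_1+2e_2)^4 (2e_1+2e_2)^8$ satisfies $\min \mathsf L(W) = 6$; this gives $\mathsf t(C_3^2, U) \ge \max\{5, 1+6\} = 7 > \mathsf D(C_3^2)$. A quick enumeration using the $\bmod\,3$ constraints from $\sigma = 0 \in C_3^2$ shows that there are exactly eight atoms supported in $\{2e_1, 2e_1+e_2, e_1+2e_2, 2e_1+2e_2\}$: the pair $(2e_1+e_2)(e_1+2e_2)$, the four cubes $(2e_1)^3, (2e_1+e_2)^3, (e_1+2e_2)^3, (2e_1+2e_2)^3$, the length-$3$ atom $(2e_1)(2e_1+e_2)(2e_1+2e_2)$, the length-$4$ atom $(2e_1)(e_1+2e_2)^2 (2e_1+2e_2)$, and the length-$5$ atom $(2e_1)^2 (e_1+2e_2)(2e_1+2e_2)^2$. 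The four element-count equations for $W$ then form a linear Diophantine system in eight non-negative unknowns whose feasible total-atom counts work out to $\mathsf L(W) = \{6,7,8\}$ by a short case split. The main obstacle is precisely this double bookkeeping at $r = 2$: verifying atomicity of each $V_i$ and ruling out spurious candidates such as $(2e_1+e_2)^2 (e_1+2e_2)^2$, which has total sum zero but fails to be an atom because of the proper subsum $(2e_1+e_2)(e_1+2e_2) = 0$.
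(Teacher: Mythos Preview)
Your proof is correct and follows essentially the same strategy as the paper: handle $r=1$ via Proposition~\ref{3.3}, reduce $r\ge 2$ via Proposition~\ref{4.4} to the base case $r=2$, and there exhibit $U=e_1^2e_2^2(e_1+e_2)$ together with five atoms $V_i$ so that $\min\mathsf L(U^{-1}V_1\cdots V_5)\ge 6$. The only substantive difference is your choice of $V_5$: you take $(e_1+e_2)(2e_1)^2(2e_1+e_2)^2$ of length~$5$, whereas the paper uses $(e_1+e_2)(-e_1)^2(e_1-e_2)$ of length~$4$; accordingly your $W$ has length~$20$ rather than~$19$, and you verify $\min\mathsf L(W)=6$ by a full Diophantine enumeration of the seven relevant atoms, while the paper argues more directly by tracking the multiplicity of $-e_1$ (which is~$2$ in its $W$) to bound how many long atoms can occur. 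Both routes give $\mathsf t(C_3^2,U)\ge 7>5=\mathsf D(C_3^2)$.
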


\begin{proof}
Let $r=1$. Then $\mathsf D (G)=3$, and by Proposition \ref{3.3} we have
\[
3 \le \mathsf t (H) \le  \max \left\{ \mathsf t (G), \Big\lfloor \frac{3+(\mathsf D (G_P)-1)^2}{2} \Big\rfloor \right\} = \max \{ \mathsf t (G), 3 \} \,,
\]
and hence it suffices to check that $\mathsf t (G) \le 3$. Let
 $U \in \mathcal A (G)$. If $|U| = 2$, then $\mathsf t (G, U) \le 1 + \frac{|U|(\mathsf D (G)-1)}{2} = 3$. If $|U| = 3$, then $U = g^3$, $V_1=V_2=V_3 = (-g)g$ for some nonzero $g \in G$, and hence $\mathsf t (G, U) = 2$.

Note that $\mathsf D (C_3^r) = \mathsf D^* (C_3^r) = 2r+1$. Since $\mathsf t (H) \ge \mathsf t (G)$,  Proposition \ref{4.4} implies that  it is sufficient to show that $\mathsf t (C_3 \oplus C_3) > 5 = \mathsf D (C_3 \oplus C_3)$.

Let $G = C_3 \oplus C_3$, and let $(e_1, e_2)$ be a basis of $G$. We define
\[
V_1 = V_2 = e_1 (-e_1-e_2)^2(e_2-e_1)^2, \ V_3 = V_4 = e_2 (-e_1-e_2)^2 (e_1-e_2)^2 , \ V_5 = (e_1+e_2)(-e_1)^2 (e_1-e_2) \,,
\]
and
\[
U = e_1^2 e_2^2 (e_1+e_2) \,.
\]
Then $U, V_1, \ldots, V_5 \in \mathcal A (G)$, $U \t V_1 \cdot \ldots \cdot V_5$, but $U$ does not divide any proper subproduct. We set $W = U^{-1}V_1 \cdot \ldots \cdot V_5$, and assert that $\min \mathsf L (W) = 6$, which implies that
\[
\mathsf t (G) \ge \mathsf t (G, U) \ge 1 + \min \mathsf L (W) = 7 \,.
\]

Note $W = (-e_1-e_2)^8(e_1-e_2)^5(e_2-e_1)^4(-e_1)^2$, in particular is has lenght $19$.
We determine the atoms $S \in \mathcal A (G)$ with $S \t W$ and $|S| \ge 4$.
Such an atom must not contain both $(e_1-e_2)$ and $(e_2-e_1)$, yet contains at least three distinct elements;
consequently it contains $-e_1$. First, suppose the two elements besides $-e_1$ are $(-e_1-e_2)$ and $(e_2-e_1)$.
We note that $(-e_1)(-e_1-e_2)(e_2-e_1)$ is a (minimal) zero-sum sequence, and thus the only minimal zero-sum sequence with this support.
Thus, since $|S|\ge 4$, we have $\supp(S) = \{-e_1, -e_1-e_2, e_1-e_2\}$.
If the multiplicity of $-e_1$ is $1$, we get the atom $(-e_1)(-e_1-e_2)(e_1-e_2)^2$, and
if the this multiplicity is $2$, we get the atom $(-e_1)^2(-e_1-e_2)^2(e_1-e_2)$.

Therefore, noting that the multiplicty of $-e_1$ in $W$ is $2$, we can infer that every
factoriazation of $W$ contains (counted with multiplicity) either one atom of lengths $5$ and none of lengths $4$ or none of length $5$
and at most $2$ of length $4$.
Thus, $\min \mathsf{L}(W) $ is at least the smaller of $ 1 + \lceil(19-5)/3 \rceil = 6$ and $2 + \lceil(19-2\cdot 4)/3 = 6$;
that is it is at least $6$.
\end{proof}

\medskip
\begin{lemma} \label{4.10}
Let $G = C_4^r$ with $r \in \N$. Then $\mathsf t (H) = \mathsf D (G)$ if and only if $r=1$.
\end{lemma}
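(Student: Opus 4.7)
The plan is to split the argument into two directions, mirroring the structure of Lemmas~\ref{4.8} and~\ref{4.9} but confronting the fact that $\mathsf D(G)=4$ leaves the generic bounds of Proposition~\ref{3.3} too loose.

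\emph{Sufficiency $(r=1)$.} Suppose $G=C_4$, so $\mathsf D(G)=4$. The lower bound $\mathsf t(H) \ge \mathsf t(G) \ge \mathsf D(G) = 4$ is immediate from Propositions~\ref{3.3}.3 and~\ref{3.3}.4. For $\mathsf t(H) \le 4$, fix $u \in \mathcal A(H)$ and set $U = \boldsymbol\beta(u)$. When $|U| \le 2$, Proposition~\ref{3.3}.2 yields $\mathsf t(H,u) \le \max\{\mathsf t(G,U), 3\} \le 1 + |U|(\mathsf D(G)-1)/2 = 4$ directly. When $|U| \in \{3,4\}$, adapt the argument in the proof of Proposition~\ref{3.3}.3: any putative $\mathsf t(H,u) > 4$ would equal $1 + \min \mathsf L(W)$ for some $W = A_1 \cdot \ldots \cdot A_m$ arising from a decomposition $U = S_1 \cdot \ldots \cdot S_m$ with $S_i A_i \in \mathcal A(G)$, each $A_i$ zero-sum free of sum $-\sigma(S_i)$ and length at most $\mathsf D(G) - |S_i|$. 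Since $U$ is $1^2 \cdot 2$ or $1^4$ up to symmetry, the $S_i$'s are concatenations of $1$'s (with possibly one~$2$), and the $A_i$'s are restricted to a short explicit list (for $S_i=1$: $(3)$, $1 \cdot 2$, or $1^3$; and analogous short lists for $|S_i| \in \{2,3,4\}$). A finite case check over $m \in \{1,\ldots,|U|\}$ and the possible $A_i$'s shows $\min \mathsf L(A_1 \cdot \ldots \cdot A_m) \le 3$ throughout, hence $\mathsf t(H,u) \le 4$.

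\emph{Necessity $(r \ge 2)$.} Since $\mathsf t(H) \ge \mathsf t(G)$ by Proposition~\ref{3.3}, it suffices to show $\mathsf t(G) > \mathsf D(G) = 3r+1$. For the base case $r=2$, fix a basis $(e_1,e_2)$ of $C_4 \oplus C_4$, take $U = e_1^3 e_2^3 (e_1+e_2)$ (an atom of length $\mathsf D(G)=7$), and construct atoms $V_1, \ldots, V_m$ with $U \t V_1 \cdot \ldots \cdot V_m$ minimally in the spirit of the $C_3 \oplus C_3$ construction of Lemma~\ref{4.9}: pair each element of $U$ with tail elements drawn from $\{-e_1, -e_2, -e_1-e_2, \pm(e_1-e_2), \pm(e_1+e_2)\}$, choosing multiplicities so that the residue $W = U^{-1} V_1 \cdot \ldots \cdot V_m$ avoids the length-$4$ atoms $g^4$ in its support (each element of $\supp W$ appearing at most three times). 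This forces any factorization of $W$ to rely on short atoms only, yielding $\min \mathsf L(W) \ge 7$ and hence $\mathsf t(G,U) \ge 1 + \min \mathsf L(W) \ge 8 > 7$.

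For $r \ge 3$, apply Proposition~\ref{4.4}.2 with $G_1 = C_4^2$ and $G_2 = C_4^{r-2}$: the hypothesis $\mathsf t(G_1) > \mathsf D(G_1)$ is supplied by the base case, and $\mathsf d(G_1 \oplus G_2) = \mathsf d^*(G_1) + \mathsf d^*(G_2) = 3r$ holds because $C_4^r$ is a $p$-group, so $\mathsf d = \mathsf d^*$. Proposition~\ref{4.4}.2 then yields $\mathsf t(C_4^r) > \mathsf D(C_4^r)$ for every $r \ge 2$, and therefore $\mathsf t(H) > \mathsf D(G)$ as required.

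The main obstacle is the construction step for $r=2$: choosing $V_1, \ldots, V_m$ so that the residue $W$ both respects the minimality condition on $U \t V_1 \cdot \ldots \cdot V_m$ at the level of $H$ and resists any factorization of length $\le 6$. Controlling the length-$4$ atoms $g^4$ (the only long atoms in $C_4^2$ aside from those of length $\le 3$) by capping element multiplicities in $\supp(W)$ is the key lever.
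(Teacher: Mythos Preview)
Your outline for $r=1$ and the inductive step via Proposition~\ref{4.4}.2 for $r\ge 3$ match the paper's argument. The genuine gap is the base case $r=2$, and it stems from a false structural claim about $\mathcal A(C_4\oplus C_4)$.

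You write that the atoms $g^4$ are ``the only long atoms in $C_4^2$ aside from those of length $\le 3$.'' This is not true: $\mathsf D(C_4^2)=7$, and there are many mixed-support atoms of every length from $4$ through $7$ (for instance $U=e_1^3e_2^3(e_1+e_2)$ itself, or $(e_1+e_2)(2e_1-e_2)^3(-e_1+2e_2)^3$). Consequently, capping each multiplicity in $W$ at~$3$ does \emph{not} force factorizations of $W$ to use only atoms of length $\le 3$; it only rules out the very special atoms $g^4$. Your conclusion $\min\mathsf L(W)\ge 7$ therefore has no support, and with $|W|\le 3\,|\supp(W)|$ kept small you would in fact struggle to make $\min\mathsf L(W)$ large enough at all.

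The paper takes the opposite tack: it builds $W=(-e_1-e_2)^{18}(e_1+e_2)^3(2e_1-e_2)^3(-e_1+2e_2)^3$ with one element of very \emph{high} multiplicity, chooses the support so that the full list of atoms dividing $W$ is short and explicit (six atoms $S_1,\ldots,S_6$, the longest reaching length~$7$), and then enumerates $\mathsf Z(W)$ to get $\mathsf L(W)=\{7,9\}$. The point is not to exclude long atoms but to control \emph{all} atoms in $\supp(W)$ tightly enough to compute $\mathsf Z(W)$ by hand. You need to supply a concrete $V_1,\ldots,V_m$ and an actual verification of $\min\mathsf L(W)\ge 7$; the heuristic you describe does not do this.
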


\begin{proof}
Let $r=1$. Let $u \in \mathcal A (H)$ but not prime. We have to show that $\mathsf t (H, u) \le \mathsf D (G) = 4$. Let $v_1, \ldots, v_m \in \mathcal A (H)$ such that $u \t v_1 \cdot \ldots \cdot v_m$, but $u$ divides no proper subproduct. We set $w = u^{-1}v_1 \cdot \ldots \cdot v_m \in H$, and note that $m \le |u|$ and $m \in [2,4]$. If $m = 2$, then $|u^{-1}v_1v_2| \le 6$, and hence $\min \mathsf L (w) \le 3$. We set $U = \boldsymbol \beta (u)$, $W = \boldsymbol \beta (w)$, $V_i = \boldsymbol \beta (v_i)$ for all $i \in [1,m]$, and distinguish the cases $m=3$ and $m=4$.

\noindent CASE 1: \,$m=3$.

First suppose that $|U|=3$. Then $U = g^2(2g)$ for some $g \in G$ with $\ord (g) = 4$, and we may suppose that $g \t V_1$, $g \t V_2$, and $(2g) \t V_3$. Then $|V_3| \le 3$. Assume to the contrary that $\min \mathsf L (W) \ge 4$. Then $|W| \ge 8$, which implies that $|V_1| = |V_2| = 4$ and $|V_3| = 3$. Then $V_1=V_2 = g^4$, and $V_3 \in \{ (2g)g^2, (2g)(-g)^2 \}$. In both cases we get that $\min \mathsf L (W) < 4$, a contradiction.

Now suppose that $|U|=4$. Then $U = g^4$  for some $g \in G$ with $\ord (g) = 4$, and we assume again that $\min \mathsf L (W) \ge 4$. This implies that $|W| = 8$, and hence, after renumbering if necessary,  $V_1 \in \{g^4,  g^2(2g)\}$, and $V_2=V_3 = g^4$.  Thus we get $\mathsf L (W) = \{2\}$, a contradiction.

\noindent CASE 2: \,$m=4$.

Then $U = g^4$  for some $g \in G$ with $\ord (g) = 4$. Thus $V_1, V_2, V_3, V_4  \in \{ g(-g), g^2 (2g), g^4 \}$. Assume to the contrary that $\min \mathsf L (W) \ge 4$. Then $|W| \ge 8$, and at most two of the $V_i$ are equal to $(-g)g$. Discussing all possibilities we quickly see that $\min \mathsf L (W) \le 3$, a contradiction.

\smallskip
Now suppose that $r \ge 2$, and
note that $\mathsf D (C_4^r) = \mathsf D^* (C_4^r) = 3r+1$. Since $\mathsf t (H) \ge \mathsf t (G)$, it suffices to prove that $\mathsf t (G) > \mathsf D (G)$. Thus by Proposition \ref{4.4} it is sufficient to show that $\mathsf t (C_4 \oplus C_4) > 7 = \mathsf D (C_4 \oplus C_4)$.

Let $G = C_4 \oplus C_4$ and let $(e_1, e_2)$ be a basis of $G$. We define
\[
\begin{aligned}
V_1  = V_2 = V_3 & = e_1 (-e_1-e_2)^3 (2e_1-e_2) \,, \\
V_4  = V_5 = V_6 & = e_2 (-e_1-e_2)^3 (-e_1+2e_2) \,,   \\
V_7 & = (e_1+e_2)^4 \,, \qquad \text{and} \qquad
U   = e_1^3 e_2^3 (e_1+e_2) \,.
\end{aligned}
\]
Then $U, V_1, \ldots, V_7 \in \mathcal A (G)$, $U \t V_1 \cdot \ldots \cdot V_7$, but $U$ does not divide any proper subproduct. We set $W = U^{-1}V_1 \cdot \ldots \cdot V_7$, and assert that $\min \mathsf L (W) = 7$, which implies that
\[
\mathsf t (G) \ge \mathsf t (G, U) \ge 1 + \min \mathsf L (W) = 8 \,.
\]
First we determine the atoms $S \in \mathcal A (G)$ with $S \t W$. Since
\[
(2e_1-e_2, -e_1+2e_2) = (e_1, e_2) \left( \begin{array}{cc}
2 & -1 \\
-1 &  2 \end{array} \right)
\]
and the determinant of the transformation matrix equals $-1$ modulo $4$, it follows that $(2e_1-e_2, -e_1+2e_2)$ is independent, and hence
the sequence $(2e_1-e_2)^3 (-e_1+2e_2)^3$ is zero-sum free.
Now it is easy to check that
\[
S_1 = (e_1+e_2)(2e_1-e_2)^3 (-e_1+2e_2)^3 , S_2 = (e_1+e_2)^2 (2e_1-e_2)^2 (-e_1+2e_2)^2 , S_3 = (e_1+e_2)^3(2e_1-e_2) (-e_1+2e_2)
\]
are the atoms $S$ with $(-e_1-e_2) \nmid S$ and $S \t W$, and
\[
S_4 = (-e_1-e_2)(2e_1-e_2) (-e_1+2e_2), S_5 = (-e_1-e_2)(e_1+e_2) , S_6 = (-e_1-e_2)^4
\]
are the atoms $S$ with $(-e_1-e_2) \t S \t W$. We claim that
\[
\mathsf Z (W) = \{ S_1 S_5^2 S_6^4, S_3 S_4^2 S_6^4, S_2 S_4S_5 S_6^4, S_4^3 S_5^3 S_6^3 \}
\]
which implies that $\mathsf L (W) = \{ 7, 9\}$. Clearly, it remains to show that the given factorizations are the only ones. Let $z \in \mathsf Z (W)$. If $S_1 \t z$, then obviously $z = S_1 S_5^2 S_6^4$. Suppose that $S_1 \nmid z$. If $S_3 \t z$, then $z = S_3 S_4^2 S_6^4$. Suppose that $S_3 \nmid z$. If $S_2 \t z$, then $z = S_2 S_4S_5 S_6^4$. If also $S_2 \nmid z$, then $z = S_4^3 S_5^3 S_6^3$.
\end{proof}

\medskip
\begin{theorem} \label{4.11}
Let $H$ be a Krull monoid with finite class group $G$ such that every class contains a prime divisor.
\begin{enumerate}
\item  $\mathsf t (G) = \mathsf D^* (G)$ if and only if $G \in \{ C_3, C_4, C_2^2, C_2^3 \}$.

\smallskip
\item If one nontrivial class contains at least two distinct prime divisors, then $\mathsf t (H) = \mathsf D^* (G)$ if and only if $G \in \{C_2, C_3, C_4, C_2^2\}$.

\smallskip
\item Suppose that $G$ has rank $\mathsf r (G) = r$, and consider  both, $\mathsf D (G)$ and $\mathsf t (H)$, as  functions  in $r$. Then there are constants $M_1, M_2, M_3, M_4 \in \mathbb R_{>0}$ (depending only on $\exp (G)$ but
      not on $r$) such that
      \[
      M_1 r \le \mathsf D (G) \le M_2r \quad \text{and} \quad M_3 r^2 \le \mathsf t (H) \le  M_4 r^2 \,.
      \]
      In particular, $\mathsf t (H)$  grows as the upper bound $1 + \mathsf D (G) ( \mathsf D (G)-1) /2$, given in Proposition \ref{3.3}.3.
\end{enumerate}
\end{theorem}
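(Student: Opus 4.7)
The statement has three parts, and I would address each in turn, assembling them from the lemmas and propositions established earlier in this section.

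For Part 1, the "if" direction is immediate: Lemmas \ref{4.9} and \ref{4.10} (applied at $r=1$) give $\mathsf t (H) = \mathsf D (G) = \mathsf D^* (G)$ for $G \in \{C_3,C_4\}$, Lemma \ref{4.8}.2 gives $\mathsf t (H) = \mathsf D (G) = \mathsf D^* (G)$ for $G = C_2^2$, and Lemma \ref{4.8}.1 gives $\mathsf t (G) = \mathsf D (G) = \mathsf D^* (G)$ for $G = C_2^3$; the chain $\mathsf D (G) \le \mathsf t (G) \le \mathsf t (H)$ then forces $\mathsf t (G) = \mathsf D^* (G)$. For the converse I split on whether $\mathsf D (G) = \mathsf D^* (G)$. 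If not, then already $\mathsf t (G) \ge \mathsf D (G) > \mathsf D^* (G)$. If yes, I must produce a strict witness. For cyclic $C_n$ with $n \ge 5$ not a prime power, Lemma \ref{4.1}.1 combined with $\mathsf k^* (G) \ge 1$ (at least two prime-power factors) gives $\mathsf t (G) \ge 1 + n > n$. For cyclic $C_n$ with $n = p^k \ge 5$, the construction from the proof of Lemma \ref{4.1} with $q = 2$ (when $n$ is odd) or $q = 3$ (when $n = 2^k$, $k \ge 3$) yields $\mathsf t (G) \ge n + (q-1)(\lfloor n/q \rfloor - 1) > n$. For non-cyclic $p$-groups the homocyclic base cases $C_2^r$ with $r \ge 4$, $C_3^r$ with $r \ge 2$, and $C_4^r$ with $r \ge 2$ are covered directly by Lemmas \ref{4.8}.1, \ref{4.9}, \ref{4.10}, while for $C_p^r$ with $p \ge 5$ and $r \ge 2$ one applies Proposition \ref{4.4}.1 iteratively starting from the cyclic case. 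Arbitrary finite abelian $G$ with $\mathsf D (G) = \mathsf D^* (G)$ outside the exceptional list are reached by splitting off a suitable direct summand and invoking Proposition \ref{4.4}.2, whose hypothesis $\mathsf d (G_1 \oplus G_2) = \mathsf d^* (G_1) + \mathsf d^* (G_2)$ is automatic here since $\mathsf d (G) = \mathsf d^* (G)$.

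For Part 2, the "if" direction for $G \in \{C_3, C_4, C_2^2\}$ is exactly the statements of Lemmas \ref{4.9}, \ref{4.10}, \ref{4.8}.2. For $G = C_2$, a direct argument shows that every atom $u$ of $H$ has $|\boldsymbol \beta (u)| \le \mathsf D (C_2) = 2$, and tracing the definition of $\mathsf t (H, u)$, any minimal relation $u \mid v_1 \cdots v_m$ forces $m \le 2$ and $u^{-1} v_1 \cdots v_m$ to be a single atom, so $\mathsf t (H, u) \le 2 = \mathsf D^* (C_2)$, with equality because $H$ is not factorial. For the converse, if $G \notin \{C_2, C_3, C_4, C_2^2, C_2^3\}$ then Part 1 already yields $\mathsf t (G) > \mathsf D^* (G)$, hence $\mathsf t (H) \ge \mathsf t (G) > \mathsf D^* (G)$ by Proposition \ref{3.3}. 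The single remaining case is $G = C_2^3$, where Part 1 gives only $\mathsf t (G) = \mathsf D^* (G)$, but Lemma \ref{4.8}.3 precisely promotes this to $\mathsf t (H) = \mathsf D (G) + 1 > \mathsf D^* (G)$ using the hypothesis that one nontrivial class contains two distinct prime divisors.

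For Part 3, linear bounds on $\mathsf D (G)$ are routine: $r \le \mathsf D^* (G) \le \mathsf D (G) \le 1 + r(\exp (G) - 1)$, so $M_1 = 1$ and $M_2 = \exp (G)$ suffice. The upper bound $\mathsf t (H) \le 1 + \mathsf D (G)(\mathsf D (G) - 1)/2 \le M_4 r^2$ with $M_4 = \exp (G)^2 / 2$ is Proposition \ref{3.3}.3. The quadratic lower bound on $\mathsf t (H)$ is the substantive step: letting $p$ be the smallest prime dividing $n_1$ (hence every $n_i$), the group $G$ contains $r$ independent elements $e_1, \dots, e_r$ of order $p$, all of whose pairwise gcds of orders equal $p > 1$. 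Applying Lemma \ref{4.2}.1 (passing to $r-1$ if $r$ is odd, which costs only an $O(r)$ term) produces $\mathsf t (G) \ge 1 + 2 r \lfloor r/(2p) \rfloor \ge r^2 / p - O(r)$, and then $\mathsf t (H) \ge \mathsf t (G)$ combined with $p \le \exp (G)$ completes the bound with an $M_3$ depending only on $\exp (G)$, small values of $r$ absorbed by shrinking the constant. I expect the main obstacle to lie in Part 1's converse, where the case analysis must match every finite abelian group outside the four exceptions either to a base case of Lemmas \ref{4.8}--\ref{4.10} or to a reduction via Proposition \ref{4.4}, and ensuring this coverage is complete is the most delicate bookkeeping in the proof.
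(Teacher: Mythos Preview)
Your approach has two genuine gaps.

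\textbf{Part 1, converse direction.} You bypass Lemma~\ref{4.3}, which is the paper's key structural step: combining Lemma~\ref{4.1}.1 with Lemma~\ref{4.3} shows that $\mathsf t(G) = \mathsf D^*(G)$ forces $\exp(G)\mathsf k^*(G) = \mathsf d^*(G)$, hence $G$ must be a homocyclic $p$-group $C_n^r$. This reduces everything to Lemmas~\ref{4.8}--\ref{4.10} and one application of Proposition~\ref{4.4}.2 for $n \ge 5$. Your substitute case analysis does not close: take $G = C_2 \oplus C_4$. This is a non-homocyclic $p$-group with $\mathsf D(G) = \mathsf D^*(G) = 5$, so it falls into your catch-all ``split off a summand and use Proposition~\ref{4.4}.2.'' But Proposition~\ref{4.4} requires $\mathsf t(G_1) > \mathsf D(G_1)$ for one summand, and the only nontrivial direct summands here are $C_2$ (with $\mathsf t(C_2)=0$) and $C_4$ (with $\mathsf t(C_4) = 4 = \mathsf D(C_4)$ by Lemma~\ref{4.10}). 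Neither qualifies, so your reduction fails. The paper's route via Lemma~\ref{4.3} handles this group immediately, since $\exp(G)\mathsf k^*(G) = 4(1/2+3/4) = 5 > 4 = \mathsf d^*(G)$ gives $\mathsf t(G) \ge 6 > 5$ directly from Lemma~\ref{4.1}.1.

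\textbf{Part 3, upper bound on $\mathsf D(G)$.} Your claimed inequality $\mathsf D(G) \le 1 + r(\exp(G)-1)$ amounts to $\mathsf D(C_n^r) = \mathsf D^*(C_n^r)$ for $n = \exp(G)$, which is open when $n$ is not a prime power. The paper instead uses the known bound $\mathsf D(C_n^r) \le n + n\log n^{r-1} \le (n\log n)r$ (see \cite[Theorem~5.5.5]{Ge-HK06a}) to get $M_2 = n\log n$. Your $M_2 = \exp(G)$ is not justified.

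The remaining parts of your sketch (Part~2, and the quadratic lower bound in Part~3 via Lemma~\ref{4.2}) are essentially correct and match the paper's argument, though in Part~3 you should also track the coprimality condition $\gcd(s-1,p)=1$ from Lemma~\ref{4.2}.2, not just parity.
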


\begin{proof}
If $|G|=1$, then both, $H$ and $\mathcal B (G)$, are factorial, whence $\mathsf t (H)=\mathsf t (G)=0$, but we have $\mathsf D^* (G)=1$.
From now on we suppose that $|G|>1$, say $G \cong C_{n_1} \oplus \dots \oplus C_{n_{r}}$,
where $r = \mathsf r (G) \in \mathbb N$ is the rank of $G$, $n_1, \ldots,n_{r} \in \N$, \ $1 < n_1 \t \ldots
\t n_{r}$, and  $n= n_r = \exp (G)$ is the exponent of $G$.

1. If $|G|=2$, then $\mathsf t (G)=0$ and $\mathsf D^* (G) = 2$.
Suppose that $|G|>2$, and that $\mathsf t (G) = \mathsf D^* (G)$. By Lemma \ref{4.1} we obtain that
$\mathsf t (G) \ge 1 + \exp (G) \mathsf k^* (G)$.
Therefore Lemma \ref{4.3} implies that equality holds and that $G$ is a $p$-group with
$n_1= \ldots = n_r$, whence $G = C_n^r$. If $n \ge 5$, then $\mathsf t (C_n) > n = \mathsf D (C_n)$ by Lemma \ref{4.1}.2. Thus  Proposition \ref{4.4}.2 implies that $\mathsf t (C_n^r) > \mathsf D (C_n^r)$. Therefore it remains to consider the cases where $n \in [2,4]$.  Lemmas \ref{4.8}, \ref{4.9}, and \ref{4.10} show that the mentioned groups satisfy $\mathsf t (G) = \mathsf D^* (G)$, and that there are no other groups $G$ with $\exp (G) \le 4$ with this property.

\smallskip
2. If $|G|=2$, then $H$ is not factorial whence $\mathsf t (H) \ge 2$, and Proposition \ref{3.3}.3 implies that $\mathsf t (H) \le 2$. If $G = C_2^3$, then Lemma \ref{4.8} implies that
$\mathsf t (H) = \mathsf D (G)+1$. Since $\mathsf t (H) \ge \mathsf t (G) \ge \mathsf D(G) \ge \mathsf D^* (G)$, the remaining assertions follow from 1.

\smallskip
3. We have $
1+ r (n_1-1) \le \mathsf D^* (G) \le \mathsf D (G) \le \mathsf D (C_n^r)$, and by \cite[Theorem 5.5.5]{Ge-HK06a},
we obtain that
\[
\mathsf D (C_n^r) \le n +  n \log n^{r-1} \le (n \log n) r \,.
\]
Thus there exist constants $M_1, M_2$ as required. Let $p$ be a prime with $p \t n_1$. Then $G$ has a subgroup isomorphic to $C_p^r$ and hence $\mathsf t (C_p^r) \le \mathsf t (G)$. We intend to find a $M_3 \in \R_{>0}$ with $M_3 r^2 \le \mathsf t (C_p^r)$. If $p=2$, this holds by Lemma \ref{4.2}.2. Let $p$ be odd. Then there is an $s \in [r-2,r]$ such that $s$ is even and $\gcd (p,s-1) = 1$, and hence
Lemma \ref{4.2}.2 implies that
\[
1+ 2 s \Big\lfloor \frac{s}{2p} \Big\rfloor \le \mathsf t (C_{p}^s) \le \mathsf t (C_p^r) \,.
\]
Finally Proposition \ref{3.3}.3 implies that
\[
\mathsf t (H) \le 1 + \frac{\mathsf D (G) \big(\mathsf D (G)-1 \big)}{2} \le \mathsf D (G)^2 \le M_2^2 r^2 \,. \qedhere
\]
\end{proof}

\section{Krull monoids whose class group is either cyclic or an elementary $2$-group} \label{5}

In this section we study Krull monoids $H$ whose class group $G$ is either cyclic or an elementary $2$-group. We get quite precise results, which confirm the general tendency of the tame degree indicated by Theorem \ref{4.11}. Suppose  $|G| \ge 3$, that every class contains a prime divisor, and consider again the inequality
\[
\mathsf D (G) \le \mathsf t (H) \le 1 + \frac{\mathsf D (G) \big( \mathsf D (G)-1 \big)}{2} \,.
\]
In case of elementary $2$-groups, $\mathsf t (H)$ almost equals the upper bound and, apart from one exceptional  case, we always have $\mathsf t (H) = \mathsf t (G)$. Suppose that $G$ is cyclic of order $|G|=n \ge 5$. Then $\mathsf D (G) = n \le \mathsf t (H) \le n^2$ (for better lower bounds see Lemma \ref{4.1}). As expected, it turns out that the tame degree is close to the lower bound.

\medskip
\begin{theorem} \label{5.1}
Let $H$ be a Krull monoid whose class group $G$ is an elementary
$2$-group, say $G \cong C_2^r$ with $r \in \mathbb N$, and
suppose that every class contains a prime divisor. Then we have
\begin{enumerate}
\item If $r=1$, then $\mathsf t (H) = 2$ and $\mathsf t (G) = 0$.

\smallskip
\item If $r=3$, then $\mathsf t (G) = 4$, and if  one nontrivial class contains at least two distinct prime divisors, then $\mathsf t (H) = 5$.

\item
\[
\mathsf t (H) = \mathsf t (G) \quad \begin{cases} = 1 + \frac{r^2}{2} & \quad \text{if} \ \ r \ge 2 \quad \text{is even,}  \\
                                                   \ge 2 + \frac{r(r-1)}{2} & \quad \text{if} \ \  r \ge 5 \quad
\text{is odd.}
\end{cases}
\]
\end{enumerate}
\end{theorem}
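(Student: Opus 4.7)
The plan is to handle each of the three parts in turn, with part 3 carrying the bulk of the work.

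Part 1 ($r=1$) is quick: $\mathcal B(C_2)$ is factorial since its only non-trivial atom $gg$ is prime, so $\mathsf t(G)=0$. Proposition 3.3.2 bounds $\mathsf t(H,u) \le (3+(|u|-1)(\mathsf D(G)-1))/2 = 2$ for $|u|\le \mathsf D(G)=2$. Since $H$ has class group $C_2$ and is therefore not factorial, the divisor theory must include at least two distinct primes $p,p'\in P$ in the non-trivial class; then $p^2p'^2=(pp')^2$ produces two factorizations of length $2$ at distance $2$, giving $\mathsf t(H,pp')=2$. Part 2 ($r=3$) is a direct application of Lemma 4.8.

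For part 3 with $r\ge 2$ even, Lemma 4.2.2 provides the lower bound $\mathsf t(G)\ge 1+r^2/2$. For the matching upper bound, Proposition 3.3.2 yields $\mathsf t(H,u)\le\max\{\mathsf t(G,U),(3+(|u|-1)r)/2\}$; the second term is maximised at $|u|=r+1$ with value $(r^2+3)/2$, and when $r$ is even this is a half-integer whose integer floor is exactly $1+r^2/2$. Combined with $\mathsf t(G)\le\mathsf t(H)$, this gives $\mathsf t(H)=\mathsf t(G)=1+r^2/2$.

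The odd case $r\ge 5$ is the substantive one. For the lower bound $\mathsf t(G)\ge 2+r(r-1)/2$ I would exhibit an explicit witness in $C_2^r$: starting from a basis $e_1,\ldots,e_r$ with $e_0=e_1+\ldots+e_r$, the natural candidate $U=\prod_{i=1}^r(e_0+e_i)$ together with $V_i=(e_0+e_i)\prod_{\nu\ne i}e_\nu$ gives an atom $U$ of length $r$ with $U\mid V_1\cdots V_r$ such that $U$ divides no proper subproduct; then $W=U^{-1}V_1\cdots V_r=\prod_\nu e_\nu^{r-1}$ factors uniquely as $(e_\nu^2)^{(r-1)/2}$ in each coordinate (since $r-1$ is even), so $\min\mathsf L(W)=r(r-1)/2$ and $\mathsf t(G,U)\ge 1+r(r-1)/2$. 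This falls one short of the target, and the main technical obstacle is extracting the extra $+1$ --- which requires either passing to a variant with $|U|=r+1$ or appending a carefully chosen $V_{r+1}$, both constrained by the condition that $U$ divide no proper subproduct, which forces each element of $U$ to be uniquely supplied by exactly one $V_i$. Once this lower bound is in hand, the equality $\mathsf t(H)=\mathsf t(G)$ for odd $r$ follows by combining it with the Proposition 3.3.2 upper bound; the crucial point is that for $r\ge 5$ the lower bound on $\mathsf t(G)$ is already large enough to absorb the transfer contribution, though one may need to strengthen the construction so that $\mathsf t(G)\ge(r^2+3)/2$, or perform a direct analysis using Lemma 4.6 applied in $H$ itself, to close the last gap.
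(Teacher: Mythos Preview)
Your treatment of parts 1 and 2 is fine and aligns with the paper.

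For part 3 with $r$ even, your argument has a gap. From Proposition~3.3.2 you get $\mathsf t(H)\le\max\{\mathsf t(G),\,\lfloor(r^2+3)/2\rfloor\}=\max\{\mathsf t(G),\,1+r^2/2\}$, and together with Lemma~4.2.2 and $\mathsf t(G)\le\mathsf t(H)$ this does yield $\mathsf t(H)=\mathsf t(G)\ge 1+r^2/2$. But it gives \emph{no upper bound on $\mathsf t(G)$}: the inequality $\mathsf t(H)\le\max\{\mathsf t(G),\ldots\}$ is vacuous for that purpose. You still owe the bound $\mathsf t(G)\le 1+r^2/2$. This follows from Lemma~4.6 applied directly (in $\mathcal B(G)$ or in $H$): for $m\ge 3$ one gets $1+\min\mathsf L(W)\le 1+\lfloor(m(r-1)+1)/2\rfloor\le 1+r^2/2$, and the small cases $m\le 2$ are immediate. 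The paper imports this as a cited result; you should invoke Lemma~4.6 rather than Proposition~3.3.2 here.

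For odd $r\ge 5$ there are two substantive problems, both of which you flag but do not resolve. First, your construction gives only $\mathsf t(G)\ge 1+r(r-1)/2$, one short; the paper cites the sharper bound $\mathsf t(G)\ge 2+r(r-1)/2$ from the literature, so you would have to locate or reproduce that argument.

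Second, and this is the real obstruction, Proposition~3.3.2 \emph{cannot} deliver $\mathsf t(H)=\mathsf t(G)$ here. For $r$ odd the second term in the max is the integer $(r^2+3)/2$, while the available lower bound on $\mathsf t(G)$ is only $(r^2-r+4)/2$, strictly smaller for all $r\ge 2$. So the sandwich does not close. The paper's proof is genuinely more delicate: for a configuration $u\mid v_1\cdots v_n$ realising $\mathsf t(H)$, Lemma~4.6 handles $n\le r$ directly. For the critical case $n=r+1$, the paper pushes down to $\mathcal B(G)$, chooses a \emph{minimal} $m\le r+1$ with $U\mid V_1\cdots V_m$, refactors $V_1\cdots V_m=UU_2\cdots U_k$ there (so $\max\{k,m\}\le\mathsf t(G)$), lifts back to a factorization $z'=uu_2\cdots u_k w_{m+1}\cdots w_{r+1}$ in $H$ via the transfer homomorphism, and then bounds $\mathsf d(z,z')\le\max\{r+1,\,k+r+1-m\}$ by $\mathsf t(G)$ through a case split on $m\in\{1\}\cup\{2\}\cup[3,r]\cup\{r+1\}$ --- using the tame degree in the fibres (Lemma~3.2.2) for $m=1$ and Lemma~4.6 to control $k$ for $m\in[3,r]$. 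This lifting-and-case-analysis argument is the missing ingredient; neither Proposition~3.3.2 nor a hoped-for stronger lower bound on $\mathsf t(G)$ substitutes for it.
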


\begin{proof}
We may suppose that $H$ is reduced, and that $H \hookrightarrow \mathcal F (P)$ a divisor theory with
class group $G$.
All statements of 1. and 2. follow from Lemma \ref{4.8} and from Theorem \ref{4.11}, and hence it remains to prove 3.
The assertion for even $r \ge 2$ follows from \cite[Corollary 6.5.6]{Ge-HK06a}.
Suppose that  $r \ge 5$ is odd. The
lower bound for  $\mathsf t (G)$ follows from \cite[Theorem
6.5.3]{Ge-HK06a}.
So it remains to show that $\mathsf t (H) =
\mathsf t (G)$.  By Proposition \ref{3.3}, it suffices to
show that $\mathsf t (H) \le \mathsf t (G)$.

Let $u \in \mathcal A (H)$. We have to show that
$\mathsf t (H, u) \le \mathsf t (G)$. If $u \in P$, then $\mathsf t
(H, u) = 0$. Suppose that $u \notin P$, and let $a \in uH$, $v_1, \ldots , v_n \in \mathcal A (H)$ with $z = v_1 \cdot \ldots \cdot v_n \in \mathsf Z (a)$ such that $u$ divides
no proper subproduct of $v_1 \cdot \ldots \cdot v_n$, and such that $\mathsf t (H, u) = \max \{n, 1 + \min \mathsf L (w)\}$, where $w = u^{-1}v_1\cdot \ldots \cdot v_n$.
If $n \le 2$, then the assertion follows. Suppose that $n \ge 3$, and note that $n \le |u|$.
Then Lemma \ref{4.6} implies that $\min \mathsf L (w) \le \frac{n(r-1)+1}{2}$. Thus we are done for $n \le r$.

Suppose that $n=r+1$. Then $u = p_1 \cdot \ldots \cdot
p_{r+1}$ and, for all $i \in [1,r+1]$, $v_i =p_ia_i$ where $p_i \in P$ and $a_i \in \mathcal F (P) \setminus \{1\}$.
For $i \in [1, r+1]$, we set $V_i = \boldsymbol \beta (v_i)$, $g_i =
[p_i]$ and we set $U = \boldsymbol \beta (u)$. Then $U \t V_1 \cdot
\ldots \cdot V_{r+1}$. After renumbering if necessary we may assume
that there is some $m \in [1, r+1]$ such that $U \t V_1 \cdot \ldots
\cdot V_m$ but $U$ does not divide any proper subproduct.

If $m=1$, then $V_1=U$, $\overline{\boldsymbol \beta}(z) = V_1 \cdot \ldots \cdot V_{r+1} \in U \mathsf Z (G)$, and by definition of the tame degree in the fibres, there is a $z' \in \mathsf Z(a) \cap u \mathsf Z(H)$ with $\overline{\boldsymbol \beta} (z') = \overline{\boldsymbol \beta} (z)$ and $\mathsf d (z,z') \le \mathsf t (H, u, \boldsymbol \beta)$. By Lemma \ref{3.2}, we get $\mathsf t (H, u, \boldsymbol \beta) \le 1 + \mathsf D (G) = r+2 \le \mathsf t (G)$.

Suppose that $m \ge 2$.
There
exist $U_2, \ldots, U_k \in \mathcal A (G)$ such that $V_1 \cdot
\ldots \cdot V_m = U U_2 \cdot \ldots \cdot U_k$ and
\[
\max \{k,m\} = \mathsf d (V_1 \cdot \ldots \cdot V_m, U U_2 \cdot
\ldots \cdot U_k) \le \mathsf t (G,U) \le \mathsf t (G) \,.
\]
Since $\boldsymbol \beta (u^{-1}a) = U_2 \cdot \ldots \cdot U_k
V_{m+1} \cdot \ldots \cdot V_{r+1}$ and $\boldsymbol \beta \colon H
\to \mathcal B(G)$ is a transfer homomorphism, there exist $u_2,
\dots , u_k, \, w_{m+1}, \dots , w_{r+1} \in \mathcal A (H)$ such
that $\boldsymbol \beta (u_i) = U_i$ for all $i \in [2,k]$, \
$\boldsymbol \beta (w_j)= V_j$ for all $j \in [m+1, r+1]$ and
$u^{-1}a = u_2 \cdot \ldots \cdot u_k w_{m+1} \cdot \ldots \cdot
w_{r+1}$. Then
\[
z' = uu_2 \cdot \ldots \cdot u_k w_{m+1} \cdot \ldots \cdot w_{r+1}
\in \mathsf Z(a) \cap u\mathsf Z(H) \quad \text{and} \quad \mathsf
d(z,z') \le \max \{r+1, k+r+1-m\}\,.
\]
If $m=r+1$, this implies \ $\mathsf d(z,z') \le \mathsf t (G)$.
If $m=2$, then
\[
k-1 \le \Big\lfloor \frac{|V_1V_2|-|U|}{2} \Big\rfloor \le \Big\lfloor \frac{2\mathsf D (G)-3}{2} \Big\rfloor = r-1 \,,
\]
and
\[
k+1+r-m \le 2r-1 \le 2 + \frac{r(r-1)}{2}  \le \mathsf t (G) \,.
\]
Suppose that $m \in [3, r]$. Then Lemma \ref{4.6} (applied with $H = \mathcal B (G)$ and $n=m \ge 3$) implies that $k-1 \le \lfloor \frac{m(r-1)+1}{2} \rfloor$. If $m \le r-1$, then
\[
k+r+1-m \le 2 + \frac{m(r-1)+1}{2} + r- m \le 2 + \frac{r(r-1)}{2}  \le \mathsf t (G) \,,
\]
If $m=r$, then $k-1 \le \frac{r(r-1)}{2}$ and
\[
k+r+1-m = k+1 \le 2 + \frac{r(r-1)}{2}  \le \mathsf t (G) \,.
\]
Thus in both cases we get $\mathsf d (z, z') \le \mathsf t (G)$.
\end{proof}

\medskip
For  $m,n \in \mathbb N$,  let $\omega (n)$ denote the number of distinct prime divisors of $n$, and let $\phi_m(n)$ denote the number
of integers $a\in [1,m]$ with $\gcd (a,n)=1$; this function is sometimes called Legendre's totient function. Thus $\phi_n(n)=\phi(n)$ is Euler's totient function.

\medskip
\begin{theorem} \label{5.2}
Let $H$ be a Krull monoid having a cyclic class group $G$ of order $|G|=n \ge 5$.
\begin{enumerate}
\item If $n = p \in \mathbb P$, then $\mathsf t (H) \le  1
+ \frac{2(p-1)p}{p+5}+2(p-1)(\frac{1}{2}+\log (\frac{p+3}{2}))$.

\smallskip
\item If $n = p^{\alpha}$, where $p \in \mathbb P$ and  $\alpha \ge 2$, then
      \[
      \mathsf t (H) \le 1
-2\alpha+\frac{2p^{\alpha+1}}{p-1}+2\alpha
n+3\sum_{i=1}^{\alpha}(p^i-1)\log\frac{p^i}{2}\,.
      \]

\smallskip
\item If $n = p_1^{\alpha_1} \cdot \ldots \cdot p_r^{\alpha_r}$, where $r \ge 2$, $\alpha_1, \ldots, \alpha_r \in \N$, and  $p_1, \ldots, p_r \in \mathbb P$ are distinct, then $\mathsf t (H) \le$
    \[
      1 + \frac{4.3}{2}\sum_{1<d|n}(d-1)+n\sum_{1<d|n,d\le
4375}\frac{d}{2}+n\sum_{4376\le
d|n}2^{\omega(d)+1}\sqrt{2\omega(d)}+3.3\sum_{1<d|n}(d-1) \log
(\lfloor\frac{d+1}{2^{\omega(d)+1}\sqrt{2\omega(d)-1}+1}-1\rfloor) \,.
    \]
\end{enumerate}
\end{theorem}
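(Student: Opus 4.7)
My plan is to apply Proposition \ref{3.3}.3, which (combined with its ``if $\mathsf{t}(H) > \mathsf{D}(G)$'' clause) yields
\[
\mathsf{t}(H) \le \max\bigl\{\mathsf{t}(G),\, \tfrac{3+(n-1)^2}{2},\, 1 + \max\{\min \mathsf{L}(W) : W \in \mathcal{B}(G \setminus \{0\}),\, |W| \le n(n-1)\}\bigr\}.
\]
Since $\mathsf{D}(G) = n$ for cyclic $G$, the whole theorem reduces to producing upper bounds for $\min \mathsf{L}(W)$, taken over zero-sum sequences $W \in \mathcal{B}(C_n \setminus \{0\})$ with $|W| \le n(n-1)$. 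The trivial bound $\min \mathsf{L}(W) \le |W|/2$ only gives $O(n^2)$, while the stated bounds grow like $n \log n$, so a sharper combinatorial argument is needed.

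My strategy for controlling $\min \mathsf{L}(W)$ is a length-sensitive greedy peeling. At each step, extract from the remaining sequence an atom of maximal length; since atoms of $\mathcal{B}(C_n)$ have length at most $n$, long atoms remove many elements at once while short atoms are costly. The logarithmic factors appearing in the theorem (such as $\log((p+3)/2)$) arise from harmonic sums of the form $\sum 1/k$ that bound how fast the length of what remains drops after each extraction. For Part 1, where $n = p$ is prime and every non-zero element has order $p$, I would organize the extraction by residue class: pair up $\{a, -a\}$ multiplicities into length-$2$ atoms, then consolidate the residual multiplicities into atoms of length $p$ by an Erd\H{o}s--Ginzburg--Ziv type step. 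For Part 2 ($n = p^\alpha$), I would iterate this procedure through the filtration $0 \subset pC_n \subset p^2 C_n \subset \cdots \subset C_n$, handling at each of the $\alpha$ levels the elements of the appropriate order by reducing to the prime case and summing the level-by-level contributions. For Part 3 with $\omega(n) \ge 2$, decompose $W$ according to the order $d \mid n$ of each of its entries, treat each divisor separately, and toggle between two different estimates for the length of zero-sum free sequences in $C_d$ according to whether $d$ is small or large: the threshold $d \le 4375$ reflects the crossover between a direct combinatorial bound and the asymptotic estimate of size $2^{\omega(d)+1}\sqrt{2\omega(d)}$.

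The main obstacle will be the tight numerical accounting: passing from the trivial $|W|/2$ bound to something of size $n \log n$ demands a careful audit of short versus long atoms at every stage of the greedy process, and a convenient matching of how much length each extraction removes to how much ``cost'' (i.e., number of new atoms) it incurs. The somewhat unusual explicit constants $4.3$, $3.3$, and $4375$ in Part 3, together with the explicit threshold appearing inside the logarithm, strongly suggest that the argument involves both computer-verified small cases and careful interpolation between two asymptotic regimes in the underlying zero-sum estimates.
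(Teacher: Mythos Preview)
Your high-level framework is right and matches the paper's: reduce $\mathsf t(H)$ to bounding $\min\mathsf L(W)$ for a zero-sum sequence $W$ of length at most $n(n-1)$, then factor $W$ greedily by repeatedly removing a longest available minimal zero-sum subsequence, and account for the number of steps via harmonic sums. The decomposition by the order of elements for Parts~2 and~3 is also what the paper does.

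There is, however, a genuine gap at the heart of the greedy step. For the peeling to yield bounds of order $n\log n$ rather than $|W|/2$, you need a quantitative guarantee: if the remaining sequence is still long, it must contain a \emph{minimal} zero-sum subsequence that is itself long. Your proposed tools do not supply this. The Erd\H{o}s--Ginzburg--Ziv theorem produces zero-sum subsequences of length exactly $n$, not minimal ones, and your pairing idea (extract $(-a)a$ first) is the opposite of length-greedy and can force $\approx|W|/2$ atoms in the worst case. The paper's key device (Lemma~\ref{5.5}) is different: for each generator $e$ of $C_n$, write every term of $S$ as $m_i^e\, e$ with $m_i^e\in[1,n]$, and average the count of terms with $m_i^e\le m$ over all $\phi(n)$ generators using Legendre's totient $\phi_m(n)$. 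This yields a threshold $|S|\ge \phi(n)(n-1)/\phi_m(n)$ beyond which some generator sees enough small coefficients to force a minimal zero-sum subsequence of length at least $\lceil n/m\rceil$. The bounds on $\phi_m(n)$ (Lemmas~\ref{5.9}--\ref{5.12}) are what produce the constants $3.3$, $4.3$, $2^{\omega(d)+1}\sqrt{2\omega(d)}$, and the threshold $4375$; these do not come from EGZ-type arguments.

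A second, smaller issue: invoking Proposition~\ref{3.3}.3 as a black box gives you only an arbitrary $W\in\mathcal B(G^\bullet)$ with $|W|\le n(n-1)$. The paper instead works directly with $W=A_1\cdots A_m$ where each $A_i$ is zero-sum free (the cofactors of the $v_i$). This extra structure is what gives $N_d\le n(d-1)$ for the number of terms of order $d$, which is how the $N_d$-dependent bounds of Lemmas~\ref{5.8} and~\ref{5.13} collapse to the stated expressions in Parts~2 and~3. Without it, a sequence of length $n(n-1)$ concentrated on a single small-order element would make the $\sum N_d/(p^i+1)$ term quadratic in $n$.
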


\medskip
We need a new combinatorial invariant and a series of lemmas. The proof of Theorem \ref{5.2} will be given at the end of this section.

\medskip
\begin{definition} \label{5.3}
Let $G$ be a finite abelian group. For every $t\in [2, \mathsf
D(G)]$, let $\mathsf m(G,t)$ denote the smallest integer $ \ell \in \N$ such that every sequence $S$ over $G \setminus \{0\}$ of length $|S| \ge \ell$ and in addition satisfying
$\mathsf{v}_g(S) \le \ord (g)$ for each $g \in $ $G \setminus \{0\}$,
has a minimal zero-sum subsequence $T$ of length  $|T| \ge t$.
\end{definition}

The idea behind defining this constant is to somehow quantify how easy or hard it is for a given group $G$ to avoid the existence of long minimal zero-sum subsequences. While it is clear that some additional condition, beyond the usual one on the length, is needed to make this definition a meaningful one, regarding the precise condition there is some flexibility. The one we choose is, except for excluding $0$, the most permissive one that seems reasonable. If one cares about minimal zero-sum sequences one never has a need for an element more than its order times. To exclude $0$ makes sense for the present application and more generally is convenient; the variant of the constant where $0$ would be admitted (with multiplicity $1$) would merely differ by exactly $1$ from the current version.

\medskip

The following lemma establishes some basic properties of this new invariant.

\begin{lemma} \label{5.4}
Let $G$ be a finite abelian group with $|G|>1$.
\begin{enumerate}
\item For every $t\in [2, \mathsf D(G)]$, we have  $\mathsf m(G,t) \ge \mathsf D (G)$.

\smallskip
\item We have $\mathsf m(G,2)= \mathsf D(G)$ and $\mathsf m(G,3) \in [2 \mathsf D^* (G)-1, 2 \mathsf D(G)-1]$.
\end{enumerate}
\end{lemma}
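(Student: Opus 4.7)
For part~1 and the equation $\mathsf{m}(G,2) = \mathsf{D}(G)$, the plan is direct; the real work lies in the two bounds on $\mathsf{m}(G,3)$, with the upper bound being the main difficulty. For every $t \in [2, \mathsf{D}(G)]$, take any zero-sum free sequence $S$ of length $\mathsf{d}(G) = \mathsf{D}(G) - 1$, which exists by the definition of $\mathsf{D}(G)$. Such an $S$ satisfies the hypotheses of the definition of $\mathsf{m}(G, t)$: the length-$1$ sequence $0$ and the sequence $g^{\ord(g)}$ are themselves zero-sums and hence cannot be subsequences of $S$, so $0 \notin \supp(S)$ and $\mathsf{v}_g(S) \le \ord(g)-1$ for every $g$. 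As $S$ has no non-empty zero-sum subsequence at all, it has no minimal one of length $\ge t$, giving $\mathsf{m}(G, t) \ge \mathsf{D}(G)$. For the matching upper bound in $\mathsf{m}(G, 2) = \mathsf{D}(G)$, any sequence $S$ meeting the hypotheses and of length $\ge \mathsf{D}(G)$ contains a non-empty zero-sum subsequence by the definition of $\mathsf{D}(G)$, and a minimal such sub-subsequence has length $\ge 2$ because $0 \notin \supp(S)$ excludes the only length-$1$ minimal zero-sum.

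For the lower bound $\mathsf{m}(G, 3) \ge 2\mathsf{D}^*(G) - 1$, the plan is to exhibit an explicit witness. Fix a basis $(e_1, \ldots, e_r)$ of $G$ with $\ord(e_i) = n_i$ adapted to the invariant decomposition $1 < n_1 \mid \cdots \mid n_r$, and consider
\[
S \ = \ \prod_{i=1}^{r} e_i^{n_i - 1}(-e_i)^{n_i - 1},
\]
of length $2\mathsf{d}^*(G) = 2\mathsf{D}^*(G) - 2$. The case $n_i = 2$, where $e_i = -e_i$ forces the $i$-th factor to collapse to $e_i^2$, is still admissible since then $\mathsf{v}_{e_i}(S) = 2 = \ord(e_i)$. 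The independence of $(e_1, \ldots, e_r)$ forces every zero-sum subsequence of $S$ to have the form $\prod_i (e_i(-e_i))^{c_i}$, whose only minimal representatives are the $e_i(-e_i)$, all of length~$2$; hence no minimal zero-sum subsequence of $S$ has length $\ge 3$.

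The main obstacle is the upper bound $\mathsf{m}(G, 3) \le 2\mathsf{D}(G) - 1$, which the plan tackles by constructing an auxiliary ``one-sided'' subsequence. Suppose for contradiction that $S$ satisfies the hypotheses, has $|S| \ge 2\mathsf{D}(G) - 1$, but has no minimal zero-sum subsequence of length $\ge 3$; then every minimal zero-sum subsequence of $S$ has length exactly~$2$ (length~$1$ being ruled out by $0 \notin \supp(S)$) and hence is of the form $g(-g)$, with $g = -g$ allowed when $\ord(g) = 2$. Define $S^* \mid S$ by including, for each unordered pair $\{g, -g\}$ with $g \ne -g$ and at least one side present in $S$, all $\max(\mathsf{v}_g(S), \mathsf{v}_{-g}(S))$ copies of the \emph{more frequent} side (break ties arbitrarily), and for each order-$2$ element $g$ with $\mathsf{v}_g(S) \ge 1$, exactly one copy of $g$. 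By construction, $S^*$ contains at most one side of each $\pm$ pair and at most one copy of each order-$2$ element, so no length-$2$ minimal zero-sum of $S$ divides $S^*$; thus $S^*$ is zero-sum free and $|S^*| \le \mathsf{D}(G) - 1$. Summing $2\max(\mathsf{v}_g(S), \mathsf{v}_{-g}(S)) \ge \mathsf{v}_g(S) + \mathsf{v}_{-g}(S)$ over $\pm$ pairs and $2 \cdot 1 \ge \mathsf{v}_g(S)$ (valid for order-$2$ $g$ since $\mathsf{v}_g(S) \le 2$) over order-$2$ elements yields $2|S^*| \ge |S|$, whence $|S| \le 2\mathsf{D}(G) - 2$, contradicting the hypothesis. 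The delicate point is defining $S^*$ uniformly so that ordinary $\pm$ pairs and order-$2$ self-pairs are both handled correctly, excluding every length-$2$ minimal zero-sum while still capturing at least half of $|S|$.
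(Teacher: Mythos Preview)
Your proof is correct and follows essentially the same approach as the paper. The only cosmetic difference is in the upper bound $\mathsf m(G,3)\le 2\mathsf D(G)-1$: you argue by contradiction, constructing the ``one-sided'' subsequence $S^*$ and using the hypothesis that every minimal zero-sum of $S$ has length $2$ to conclude $S^*$ is zero-sum free; the paper argues directly, decomposing $S=S_1S_2T_1T_2$ with $\supp(S_i)\cap\supp(-S_i)=\emptyset$ and $T_i$ squarefree over order-$2$ elements, and observing that the larger half $S_1T_1$ has length at least $\mathsf D(G)$ and hence contains a minimal zero-sum which, by construction, cannot have length $2$. Your $S^*$ is precisely such a larger half, so the two arguments are contrapositives of one another.
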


\begin{proof}
1. By definition of $\mathsf D (G)$, there is a zero-sum free sequence $S$ over $G$ of length $|S| = \mathsf D (G)-1$; note that, $S$ being zero-sum free, $\mathsf{v}_g(S) \le \ord(g)$ for each $g$. Since such a sequence does not satisfy the defining property of $\mathsf m (G, t)$, it follows that $\mathsf m(G,t) \ge \mathsf D (G)$ for every $t\in [2, \mathsf D(G)]$.

\smallskip
2. Every sequence $S$ over $G\setminus \{0\}$ of length $\mathsf D (G)$ has a zero-sum subsequence, and hence a minimal zero-sum subsequence $T$. Since $0 \nmid S$, we get $|T| \ge 2$. Thus $\mathsf m (G,2) \le \mathsf D (G)$, and equality follows by 1.

In order to show that $\mathsf m (G,3) \le 2 \mathsf D (G)-1$, let $S$ be a sequence of length $|S| \ge 2 \mathsf D (G)-1$. We write $S$ in the form $S = S_1S_2T_1T_2$ where, for $i \in [1,2]$, $S_i$ is a sequence over $G$ such that $\supp(S_i)\cap \supp(-S_i)=\emptyset$ and $T_i$ is a squarefree sequence over $G$ containing only elements of order $2$. (Recall that the multiplicity of an element of order $2$ in $S$ is at most $2$, and also note that $S_1S_2$ cannot contain elements of order $2$ appearing in $S$.)

Without restriction we may suppose that $|S_1| \ge |S_2|$ and $|T_1| \ge |T_2|$.
Then $|S_1T_1| \ge \mathsf{D}(G)$, and $S_1T_1$ thus contains a minimal zero-sum subsequence $T$. By construction we have $|T| \ge 3$.

Finally, we verify that $\mathsf m (G, 3) \ge 2 \mathsf D^* (G)-1$. Suppose that $G = C_{n_1} \oplus \ldots \oplus C_{n_r}$ with
$1 < n_1 \t \ldots \t n_r$, and let $(e_1, \ldots, e_r)$ be a basis of $G$ with $\ord (e_i) = n_i$ for all $i \in [1, r]$. Then
\[
S = \prod_{i=1}^r e_i^{n_i-1}
\]
is a zero-sum free sequence of length $|S| = \mathsf D^* (G)-1$, and the sequence $(-S)S$ fulfills the additional condition (for slightly different reasons in the cases $\ord(e_i)=2$ and $\ord(e_i)>2$), and has no minimal zero-sum subsequence $T$ of length $|T| \ge 3$. Thus it follows that $\mathsf m (G,3) > |(-S)S| = 2 \mathsf D^* (G)-2$.
\end{proof}

\medskip
{\it From now on till the rest of this section, let $G$ be a finite cyclic group of order $|G|=n \ge 5$, $G^{\bullet} = G\setminus \{0\}$, and $m \in [1,n]$.}
\medskip

\begin{lemma} \label{5.5}
Let $S$ be a
sequence over $G$ such that $\ord (g) = n$ for all $g \in \supp (S)$. If
$|S|\ge \frac{\phi(n)(n-1)+1}{\phi_m(n)}$, then $S$ has a
minimal zero-sum subsequence of length at least $\lceil\frac
nm\rceil$.
\end{lemma}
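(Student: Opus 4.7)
My plan is to exploit the natural action of $(\Z/n\Z)^\times$ on the set of generators of $G$ via a simple averaging/pigeonhole argument. Fix a generator $g_0$ of $G$; since every element of $\supp(S)$ has order $n$, each $g \in \supp(S)$ is uniquely of the form $g = \alpha(g)\, g_0$ with $\alpha(g) \in [1,n-1]$ coprime to $n$. Call $g$ \emph{small} if $\alpha(g) \in [1,m]$; exactly $\phi_m(n)$ residues correspond to small generators.

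I would begin with one short key observation: if $T = h_1 \cdot \ldots \cdot h_k$ is any non-empty zero-sum subsequence of $S$ all of whose terms are small, then $|T| \ge \lceil n/m \rceil$. Indeed, $\alpha(h_1) + \ldots + \alpha(h_k)$ is a positive multiple of $n$, while it is bounded above by $mk$, so $mk \ge n$. Hence any minimal zero-sum subsequence of $S$ with all $\alpha$-values at most $m$ automatically has the required length.

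The main step of the proof is a pigeonhole over the unit action. For each $u \in (\Z/n\Z)^\times$, set
\[
N(u) = \big|\{ i : u\alpha(g_i) \bmod n \in [1,m]\}\big| \,.
\]
For each fixed term $g_i$, the map $u \mapsto u\alpha(g_i) \bmod n$ is a bijection of $(\Z/n\Z)^\times$, so precisely $\phi_m(n)$ units contribute to index $i$; summing gives $\sum_u N(u) = |S|\,\phi_m(n)$. Averaging over the $\phi(n)$ units, some $u^\ast$ satisfies $N(u^\ast) \ge |S|\phi_m(n)/\phi(n)$, and the hypothesis on $|S|$ forces $N(u^\ast) \ge n - 1 + 1/\phi(n)$, which, as $N(u^\ast)$ is an integer, forces $N(u^\ast) \ge n$.

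To finish, I would replace $S$ by $u^\ast \cdot S$: since $u^\ast$ is a unit modulo $n$, multiplication by it is a group automorphism of $G$ and bijects (minimal) zero-sum subsequences of $S$ with those of $u^\ast \cdot S$ preserving lengths. Let $S'$ be the subsequence of small terms of $u^\ast \cdot S$; then $|S'| \ge n = \mathsf D(G)$, so by the very definition of the Davenport constant $S'$ contains a non-empty zero-sum subsequence and hence a minimal zero-sum subsequence $T'$. Every term of $T'$ is small, so the initial observation gives $|T'| \ge \lceil n/m \rceil$, and pulling $T'$ back by $(u^\ast)^{-1}$ produces the desired minimal zero-sum subsequence of $S$. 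There is no genuinely hard step here; the whole argument turns on recognizing that the odd-looking hypothesis $|S| \ge (\phi(n)(n-1)+1)/\phi_m(n)$ is precisely what makes the averaging collapse to the threshold $\mathsf D(G) = n$, at which point Davenport's constant does the rest.
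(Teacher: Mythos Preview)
Your proof is correct and follows essentially the same strategy as the paper's: both average over the $\phi(n)$ generators of $G$ (equivalently, over the unit group action) to locate one with respect to which at least $n$ terms of $S$ have coefficient in $[1,m]$, and then combine $\mathsf D(G)=n$ with the elementary bound $mk\ge n$ for a zero-sum of $k$ small terms. The only cosmetic difference is that the paper works directly with the subsequence $S(m,e)\mid S$ rather than passing through the automorphism $g\mapsto u^\ast g$ and pulling back, so no translation step is needed at the end; your formulation via the $(\Z/n\Z)^\times$-action is the same counting argument in different clothing.
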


\begin{proof}
For $e\in G$  with $\ord (e) =n$, we have $S= {m_1^e}e \cdot \ldots \cdot {m_k^e}e$ where $m_1^e, \ldots, m_k^e \in [1,n]$. For every $\ell \in [2,n-1]$,
let $S(\ell,e)$ denote the subsequence consisting of all terms $m_k^e e$
with $m_k^e \in [1,\ell]$. Clearly, if
\begin{equation} \label{eq6.3.1}
|S(\ell,e)|\ge n
\end{equation}
then $S(\ell,e)$ has a minimal zero-sum subsequence of length at
least $\lceil\frac{n}{\ell} \rceil$. So, it suffices to prove the existence of some $e$ such that
(\ref{eq6.3.1}) holds  with $\ell=m$.  Since $\ord(g)=n$ for each $g\t S$ we have
\[
\sum_{e\in G , \ord(e)=n}S(m,e)=|S|\sum_{i\in [1,m],
\gcd(i,n)=1}1=\phi_{m}(n)|S| \,.
\]
Therefore,
\[
\max \{|S(m,e)| \mid e \in G, \ord(e)=n \}\ge
\frac{\phi_{m}(n)|S|}{\phi(n)} \,.
\]
It follows from $|S|\ge \frac{\phi(n)(n-1)+1}{\phi_m(n)}$ that
$\max \{| S(m,e)| \mid e\in G, \ord(e)=n \}>n-1$. This proves
(\ref{eq6.3.1}) holds for some $e$ and completes the proof.
\end{proof}

\medskip

The following technical lemma establishes some bounds on sums that is needed several times later on; the somewhat unusual indexing is convenient then.

\begin{lemma}\label{sum}
Let $c_2, \dots , c_M$ be non-negative reals.
Let $C$ be such that
\(
\sum_{i=2}^m i c_i \le C m
\)
for each $m \in [2,M]$. Then
\(
\sum_{i=2}^m  c_i \le C (1 + \sum_{i=3}^m 1/i)
\) for each $m \in [2,M]$.
\end{lemma}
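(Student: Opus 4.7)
The plan is to use Abel summation (summation by parts) to relate $\sum_{i=2}^m c_i$ to the weighted partial sums $S_m := \sum_{i=2}^m i c_i$, which are controlled by hypothesis.

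First I would introduce $S_m = \sum_{i=2}^m i c_i$ (with the convention $S_1 = 0$), so that the assumption becomes $S_m \le Cm$ for all $m \in [2,M]$, and rewrite
\[
\sum_{i=2}^m c_i = \sum_{i=2}^m \frac{i c_i}{i} = \sum_{i=2}^m \frac{S_i - S_{i-1}}{i}.
\]
Next I would reindex the second piece and collect terms, obtaining the identity
\[
\sum_{i=2}^m c_i = \frac{S_m}{m} + \sum_{i=2}^{m-1} S_i\left(\frac{1}{i} - \frac{1}{i+1}\right) = \frac{S_m}{m} + \sum_{i=2}^{m-1} \frac{S_i}{i(i+1)}.
\]

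Since each $c_i \ge 0$, the partial sums $S_i$ are non-negative, so applying the bound $S_i \le C i$ termwise on the right-hand side yields
\[
\sum_{i=2}^m c_i \le \frac{Cm}{m} + \sum_{i=2}^{m-1} \frac{C i}{i(i+1)} = C + C\sum_{i=2}^{m-1} \frac{1}{i+1} = C\left(1 + \sum_{j=3}^{m} \frac{1}{j}\right),
\]
which is the desired inequality.

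There is no real obstacle here: the argument is a textbook Abel summation, and the only point that needs a moment of care is the reindexing of the telescoped sum and keeping the endpoint term $S_m/m$ separate from the collapsed differences. The non-negativity of the $c_i$ is used to justify applying the hypothesis $S_i \le Ci$ pointwise inside a sum of positive weights $1/(i(i+1))$; without non-negativity one could only use the bound on $S_m$ itself, which would be insufficient.
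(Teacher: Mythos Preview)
Your proof is correct. It is, however, a genuinely different argument from the paper's. The paper proceeds by induction on $M$: writing $K_m = \sum_{i=2}^m c_i$, it uses the identity $M K_M = \sum_{m=2}^{M-1} K_m + \sum_{i=2}^M i c_i$, applies the inductive hypothesis to each $K_m$ with $m<M$ and the assumption to the weighted sum, and then appeals to a further combinatorial identity (itself established by induction) to close the loop. Your Abel summation argument is more direct: by passing to the partial sums $S_i$ and telescoping, you obtain the exact identity $\sum_{i=2}^m c_i = S_m/m + \sum_{i=2}^{m-1} S_i/(i(i+1))$, after which a single termwise application of $S_i \le Ci$ finishes the proof in one stroke. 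The paper's approach has the virtue of being self-contained and elementary (no summation-by-parts machinery), but your route avoids the nested inductions entirely and makes transparent why the harmonic-type bound $1 + \sum_{i=3}^m 1/i$ emerges.
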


\begin{proof}
The argument is by induction on $M$. For $M=2$ the claim is obvious.
Consider $M\ge 3$. Set $K_m = \sum_{i=2}^m c_i$. It suffices to show the claimed bound for $K_M$ (for the others the claim is clear by hypothesis).
Note that $M K_M = \sum_{m=2}^{M-1}K_m + \sum_{i=2}^{M}ic_i$.
Thus,
\[MK_M \le C \sum_{m=2}^{M-1} (1 + \sum_{i=3}^m 1/i) + CM = CM(1 + \sum_{i=3}^M 1/i),\]
where the last equality can be seen, for example, by another inductive argument.
\end{proof}

\begin{lemma} \label{5.6}
Let $n =p \in \mathbb P$.
\begin{enumerate}
\item For every $t \in [1, p-1]$, we have $m(G,t+1)\le \left\lfloor
\frac{(p-1)^2}{\lfloor p/t\rfloor} \right\rfloor+1$.

\smallskip
\item If $S$ is a zero-sum sequence over $G^{\bullet}$, then $\min \mathsf L (S)\le \min
\{\frac{|S|}{2},\frac{2|S|}{p+5}+2(p-1)(\frac{1}{2}+\log
(\frac{p+3}{2}))\}$.
\end{enumerate}
\end{lemma}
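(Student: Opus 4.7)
For Part 1, the natural approach is to apply Lemma~\ref{5.5}. Since $G$ is cyclic of prime order $p$, every nonzero element has order $p$, so the hypothesis ``$\ord(g) = n$ for all $g \in \supp(S)$'' of Lemma~\ref{5.5} holds automatically for any sequence over $G^{\bullet}$. I would set $m = \lfloor p/t \rfloor$ whenever $m \ge 2$ (that is, $t \le p/2$). Since $m \le p - 1$ and $p$ is prime, every integer in $[1, m]$ is coprime to $p$, hence $\phi_m(p) = m$. The threshold $|S| \ge ((p-1)^2 + 1)/m$ of Lemma~\ref{5.5} becomes $|S| \ge \lfloor (p-1)^2/m \rfloor + 1$ via the elementary identity $\lceil (A+1)/B \rceil = \lfloor A/B \rfloor + 1$. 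Writing $p = qt + r$ with $q = \lfloor p/t \rfloor$ and $r \in [1, t-1]$ (possible since $p$ is prime and $t < p$), we get $p/q = t + r/q > t$, so $\lceil p/q \rceil \ge t + 1$ and the minimal zero-sum subsequence produced by Lemma~\ref{5.5} has length at least $t+1$, as required. The remaining cases of $t$ (when $\lfloor p/t \rfloor \le 1$, and in particular $t = 1$) can be handled by Lemma~\ref{5.4}.2 or by a direct pigeonhole argument: if $|S| \ge (p-1)^2 + 1$ with $\mathsf{v}_g(S) \le p$, then some $g$ has multiplicity exactly $p$, so $g^p \mid S$ is a minimal zero-sum subsequence of length $p$.

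For Part 2, the bound $\min \mathsf{L}(S) \le |S|/2$ is immediate because every atom of $\mathcal{B}(G)$ has length at least $2$. For the second bound, my plan is a greedy extraction argument. Starting with $S_0 = S$, at step $i$ with $N_i = |S_i|$ I let $t_i$ be the largest integer with $\lfloor (p-1)^2/\lfloor p/t_i \rfloor \rfloor + 1 \le N_i$; Part 1 then produces a minimal zero-sum subsequence $A_i \mid S_i$ of length $\ell_i \ge t_i + 1$, and I continue with $S_{i+1} = S_i A_i^{-1}$. This yields a factorization of $S$, so the total number of extracted atoms is an upper bound for $\min \mathsf{L}(S)$.

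To estimate that count, I would partition the extraction into phases indexed by $t$: phase $t$ consists of the steps where $t_i = t$, during which $N_i$ decreases through the interval $[\mathsf{m}(G, t+1), \mathsf{m}(G, t+2))$. The number of atoms in phase $t$ is therefore at most $(\mathsf{m}(G, t+2) - \mathsf{m}(G, t+1))/(t+1)$. Summing over $t$ and applying summation by parts rewrites the total as a sum of the shape $\sum_t \mathsf{m}(G, t+1)/(t(t+1))$ plus boundary contributions; then the bound from Part 1, combined with Lemma~\ref{sum} applied to $c_t = \mathsf{m}(G, t+1)/t$, converts this sum into a harmonic expression producing the logarithmic term $2(p-1)(\tfrac{1}{2} + \log((p+3)/2))$. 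The linear contribution $\tfrac{2|S|}{p+5}$ arises from the initial ``long-atom'' phases, where the extracted atoms have length at least $(p+5)/2$ and so contribute at most $|S|/((p+5)/2)$ atoms. The main obstacle will be choosing the phase cutoff correctly and carrying the Euclidean-division estimates $t \lfloor p/t \rfloor \in [p-t+1, p]$ through the harmonic sum precisely enough to make the constants $p+5$ and $(p+3)/2$ drop out cleanly.
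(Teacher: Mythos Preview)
Your Part 1 is exactly the paper's one-line proof (``follows from Lemma~\ref{5.5}''), just spelled out.  One small slip: you list $t=1$ among the ``remaining cases where $\lfloor p/t\rfloor\le 1$'', but $t=1$ gives $\lfloor p/t\rfloor=p$; in fact the bound in Part~1 is slightly delicate at $t=1$, though this value is never used in Part~2.

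For Part 2 your greedy extraction is the right start, but your counting scheme is more complicated than needed and your planned use of Lemma~\ref{sum} is wrong.  The paper extracts at each step a minimal zero-sum of \emph{maximal} length, sets $n_k=|\{i:|U_i|=k\}|$, and observes that for every $m\le (p+3)/2$ the tail $\prod_{|U_i|\le m}U_i$ has no minimal zero-sum of length $\ge m+1$; hence its length $\sum_{i=2}^{m} i\,n_i$ is at most $\mathsf m(G,m+1)-1\le 2(p-1)m$ by Part~1.  This is precisely the hypothesis of Lemma~\ref{sum} with $c_i=n_i$ and $C=2(p-1)$, and the lemma hands you $\sum_{i\le (p+3)/2} n_i\le 2(p-1)\bigl(\tfrac12+\log\tfrac{p+3}{2}\bigr)$ in one stroke; the remaining atoms have length $\ge (p+5)/2$ and contribute the $2|S|/(p+5)$ term.

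Your phase-by-phase scheme with Abel summation can be pushed through, but your stated application of Lemma~\ref{sum} to $c_t=\mathsf m(G,t+1)/t$ fails: the hypothesis would read $\sum_{i=2}^{m}\mathsf m(G,i+1)\le Cm$, yet each summand is $\ge \mathsf D(G)=p$, so the left side is at least $(m-1)p$, not $O(m)$.  After Abel summation you would not need Lemma~\ref{sum} at all --- the termwise bound $\mathsf m(G,t+1)/(t(t+1))\le 2(p-1)/(t+1)$ already gives a harmonic sum --- but the paper's route, applying Lemma~\ref{sum} directly to the counts $n_i$, is considerably cleaner and is in fact exactly what that lemma was formulated for.
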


\begin{proof}
1. This follows from Lemma \ref{5.5}.

\smallskip
2.
Clearly, we have $\min\mathsf L (S)\le \max\mathsf L (S) \le
\frac{|S|}{2}$. Thus it suffices to prove that $\min\mathsf L (S)$ is bounded above by the second term in the above set. To do so, we show that there exists a factorization  $S = U_1 \cdot
\ldots \cdot U_t$, where  $U_1, \ldots,
U_t \in \mathcal A (G)$ and $t$ is bounded above by the given term.
We construct $U_1, \ldots, U_t$ recursively. Indeed, for $i \in [1,t]$, let $U_i$
be a minimal zero-sum subsequence of  $S(\prod_{j=1}^{i-1}U_j)^{-1}$, whose length is maximal possible. Now we use 1. to obtain an upper bound on $t$. For every $k\in [2,p]$, let
$n_k$ be the number of $U_i$ such that $|U_i|=k$. For every $m\in
[2,p-1]$, the construction of $U_i$ and 1. imply that
\begin{equation} \label{eq6.5.1}
\sum_{i=2}^min_i\le \left\lfloor \frac{(p-1)^2}{\lfloor p/m\rfloor}
\right\rfloor \,.
\end{equation}
If $m\le \frac{p+3}{2}$, then $\lfloor p/m\rfloor\ge
\frac{p-m+1}{m} \ge \frac{p-1}{2m}$. Therefore, from
(\ref{eq6.5.1}) we infer that
\begin{equation} \label{eq6.5.2}
\sum_{i=2}^min_i \le 2(p-1)m
\end{equation}
holds for every $m\in [2,\frac{p+3}{2}]$.

By equation \eqref{eq6.5.2} and Lemma \ref{sum} we obtain that
\[
\sum_{i=2}^mn_i\le 2(p-1)(1+\frac{1}{3}+\frac{1}{4}+\ldots
+\frac{1}{m})
\]
holds for every $m\in [2,\frac{p+3}{2}]$. Especially,
\[
\sum_{i=2}^{\frac{p+3}{2}}n_i\le
2(p-1)(\frac{1}{2}+\sum_{i=2}^{\frac{p+3}{2}}\frac{1}{i})\le
2(p-1)(\frac{1}{2}+\log (\frac{p+3}{2})) \,.
\]
Since  $\sum_{i\ge \frac{p+5}{2}}in_i\le |S|$, we have
$\sum_{i\ge \frac{p+5}{2}}n_i\le \frac{2|S|}{p+5}$. Hence,
\[
t = \sum_{i=2}^pn_i\le \frac{2|S|}{p+5}+2(p-1)(\frac{1}{2}+\log
(\frac{p+3}{2}))\,. \qedhere
\]
\end{proof}

\medskip
\begin{lemma}\label{5.7}
Let $n=p^{\alpha}$,  where $p \in \mathbb P$ and $\alpha \ge 2$,
and let $S$ be a zero-sum sequence over $G$
such that $\ord (g) = n$ for all $g \in \supp (S)$. Then $\min \mathsf L
(S)\le \min
\{\frac{|S|}{2},\frac{2|S|}{n+1}+3(n-1)(\frac{1}{2}+\log
(\frac{n}{2}))\}$.
\end{lemma}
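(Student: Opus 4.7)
The plan is to adapt the proof of Lemma~\ref{5.6}.2 to the prime-power exponent $n=p^{\alpha}$, relying on Lemma~\ref{5.5}, which remains applicable because every element of $\supp(S)$ has order $n$. The trivial estimate $\min\mathsf L(S)\le |S|/2$ follows at once, since every atom has length at least two.

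For the sharper estimate, I would build a factorization $S=U_1 \cdot \ldots \cdot U_t$ recursively, with $U_i$ chosen at each step as a minimal zero-sum subsequence of the partial remainder $S(U_1 \cdot \ldots \cdot U_{i-1})^{-1}$ of \emph{maximal} possible length; the remainder is again a zero-sum sequence whose support consists of elements of order $n$, so the process terminates and exhausts $S$. Order the $U_i$ so that $|U_1|\ge |U_2|\ge \ldots \ge |U_t|$ and let $n_k$ denote the number of $i$ with $|U_i|=k$, so that $t=\sum_k n_k$ and $|S|=\sum_k k\, n_k$.

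The heart of the argument is the prime-power analogue of the key inequality in the proof of Lemma~\ref{5.6}.2: for every $m\in[2,\lfloor(n-1)/2\rfloor]$,
\[
\sum_{i=2}^m i\,n_i\le 3\,n\,m.
\]
To prove this, let $j$ be the number of $U_i$ with $|U_i|>m$. The greedy choice forces the residual sequence $S'=S(U_1 \cdot \ldots \cdot U_j)^{-1}$ to contain no minimal zero-sum subsequence of length $\ge m+1$. Applying Lemma~\ref{5.5} with $m'=\lfloor(n-1)/m\rfloor$ — the choice guaranteeing $\lceil n/m'\rceil \ge m+1$ — together with the identity $\phi(n)=n(p-1)/p$ and the easy estimate $\phi_{m'}(n)\ge m'(p-1)/p$ yields
\[
|S'| < \frac{\phi(n)(n-1)+1}{\phi_{m'}(n)} \le \frac{n(n-1)}{m'}+2,
\]
and since $m'\ge (n-1)/(2m)$ when $m\le (n-1)/2$, the desired bound $3nm$ drops out.

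With this weighted inequality in hand, Lemma~\ref{sum} (applied with $C=3n$ and $M=\lfloor(n-1)/2\rfloor$) gives
\[
\sum_{i=2}^M n_i \le 3n\Bigl(1+\sum_{i=3}^M \tfrac{1}{i}\Bigr) \le 3(n-1)\bigl(\tfrac12+\log(n/2)\bigr)
\]
after a standard harmonic-to-logarithm comparison. For the longer atoms, a direct count gives $\sum_{i>M} n_i \le |S|/(M+1) \le 2|S|/(n+1)$. Summing the two contributions yields $t \le 2|S|/(n+1)+3(n-1)(1/2+\log(n/2))$, which is the asserted bound.

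The main obstacle is the bookkeeping in the key inequality above: one must choose $m'$ so that Lemma~\ref{5.5} delivers the correct threshold $m+1$, and confirm that the bound $\phi_{m'}(n)\ge m'(p-1)/p$ is tight enough to absorb the constant $3$. The remaining work — the harmonic-to-logarithm estimate and the split at $M=\lfloor(n-1)/2\rfloor$ — is routine, modulo a minor parity adjustment needed when $p=2$ so that the cutoff $M$ lands at an integer convenient for both estimates.
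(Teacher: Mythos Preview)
Your approach is essentially the paper's: the same greedy factorization, the same appeal to Lemma~\ref{5.5} to bound the weighted partial sum $\sum_{i\le m} i\,n_i$, and Lemma~\ref{sum} to convert this into a bound on $\sum n_i$. The only structural difference is your choice of parameter $m'=\lfloor(n-1)/m\rfloor$ in Lemma~\ref{5.5} (the paper uses $\lfloor n/m\rfloor$ and bounds $\sum_{i\le m-1} i\,n_i$ instead), which amounts to a harmless shift of index.

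There is, however, a real bookkeeping slip. From $|S'|<2nm+2$ you extract $C=3n$, but applying Lemma~\ref{sum} with $C=3n$ gives $3n\bigl(1+\sum_{i=3}^M 1/i\bigr)$, and this is \emph{not} bounded by $3(n-1)\bigl(\tfrac12+\log(n/2)\bigr)$: the factor $n/(n-1)$ is not absorbed by the harmonic-to-logarithm comparison (for $n\ge 8$ one checks directly that $\tfrac12+\log((n-1)/2)>\tfrac{n-1}{n}\bigl(\tfrac12+\log(n/2)\bigr)$). The fix is already in your hands: since $n\ge 8$ and $m\ge 2$, one has $2nm+2\le 3(n-1)m$ (this reduces to $3+2/m\le n$), so your own intermediate estimate actually yields $C=3(n-1)$, after which the rest goes through. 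The paper gets $C=3(n-1)$ more directly via $\phi_{\lfloor n/m\rfloor}(n)\ge \phi(n)/(2m)$, giving $\sum_{i=2}^{m-1} i\,n_i\le 2m(n-1)\le 3(m-1)(n-1)$; it also places the cutoff at $n/2$ rather than $\lfloor(n-1)/2\rfloor$, which handles the even case $p=2$ without the parity adjustment you flag.
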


\begin{proof}
As in  Lemma \ref{5.6}.2., it suffices  to
show that there exists a factorization  $S = U_1 \cdot \ldots
\cdot U_t$, where $U_1, \ldots, U_t \in \mathcal A (G)$
and $t$ is bounded above by the second term in the above set.
We construct $U_1, \ldots, U_t$ recursively. Indeed, for $i \in [1,t]$, let $U_i$
be a minimal zero-sum subsequence of $S(\prod_{j=1}^{i-1}U_j)^{-1}$, whose length is maximal possible. We are going to use Lemma
\ref{5.5}  to get an upper bound on $t$. For every $k\in [2,n]$, let
$n_k$ be the number of $U_i$ such that $|U_i|=k$. For every $m\in
[3,n-1]$, the construction of $U_i$ and Lemma \ref{5.5} imply that
\begin{equation} \label{equ6.7.1}
\sum_{i=2}^{m-1}in_i\le \frac{\phi(n)(n-1)}{\phi_{\lfloor\frac
nm\rfloor}(n)}
\end{equation}
If $m\le \frac{n+2}{2}$ then $\phi_{\lfloor\frac nm\rfloor}(n)
=\lfloor\frac nm\rfloor-\lfloor\frac{\lfloor\frac
nm\rfloor}{p}\rfloor\ge \lfloor\frac nm\rfloor(1-\frac 1p)\ge
\frac{n-m+1}{m}(1-\frac 1p)\ge \frac{n}{2m}(1-\frac
1p)=\frac{\phi(n)}{2m}$. It follows from (\ref{equ6.7.1}) that
\[
\sum_{i=2}^{m-1}in_i \le  2m(n-1)\le 3(m-1)(n-1) \,.
\]
Therefore, for every $m\in [2,\frac{n}{2}]$, we have
\begin{equation} \label{equ6.7.2}
\sum_{i=2}^min_i \le 3(n-1)m
\end{equation}
 It follows from \eqref{equ6.7.2}, applying Lemma \ref{sum}, that
\[
\sum_{i=2}^mn_i\le 3 (n-1) (1+\frac{1}{3}+\frac{1}{4}+\ldots
+\frac{1}{m})
\]
holds for every $m\in [2,\frac{n}{2}]$. Especially,
\[
\sum_{i=2}^{\frac{n}{2}}n_i\le
3(n-1)(\frac{1}{2}+\sum_{i=2}^{\frac{n}{2}}\frac{1}{i})\le
3(n-1)(\frac{1}{2}+\log (\frac{n}{2})) \,.
\]
Since  $\sum_{i> \frac{n}{2}}in_i=\sum_{i\ge \frac{n+1}{2}}in_i\le
|S|$ , $\sum_{i> \frac{n}{2}}n_i\le \frac{|S|}{\frac{n+1}{2}}$.
Hence,
\[
t = \sum_{i=2}^nn_i\le \frac{2|S|}{n+1}+3(n-1)(\frac{1}{2}+\log
(\frac{n}{2}))\,. \qedhere
\]
\end{proof}

\medskip
\begin{lemma} \label{5.8}
Let $n=p^{\alpha}$,  where $p \in \mathbb P$   and $\alpha\ge 2$,
and let $S$ be a zero-sum sequence over
$G^{\bullet}$. For every positive divisor $d>1$ of $n$, let
$N_d$ denote the number of the terms of $S$ which have order $d$.
Then $\min \mathsf L (S)\le \min \{\frac{|S|}{2},
-2\alpha+\frac{2p^{\alpha+1}}{p-1}+\sum_{i=1}^{\alpha}\frac{2N_{p^i}}{p^i+1}
+3\sum_{i=1}^{\alpha}(p^i-1)\log\frac{p^i}{2}\}$.
\end{lemma}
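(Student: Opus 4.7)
My plan is to adapt the greedy-factorization technique of Lemma \ref{5.7} by stratifying the atoms according to the \emph{level} $\lambda(U) = \max\{i : U \text{ contains an element of order } p^i\}$, so that $\supp(U) \subset H_i$ (the unique subgroup of $G$ of order $p^i$) and $|U| \le p^i$ whenever $\lambda(U) = i$. Factor $S = U_1 \cdot \ldots \cdot U_t$ greedily, each $U_j$ being a minimal zero-sum subsequence of the remainder of maximal length, and let $n_k^{(i)}$ count the level-$i$ atoms of length $k$; then $t = \sum_{i=1}^{\alpha} \sum_{k=2}^{p^i} n_k^{(i)}$, and the trivial bound $t \le |S|/2$ is immediate from $k \ge 2$.

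For each level $i$, the short-atom contribution is controlled by mimicking the argument of Lemma \ref{5.7}: at the moment the greedy procedure first extracts a level-$i$ atom of length $< m$, the subsequence of order-$p^i$ elements present in the remainder admits no minimal zero-sum subsequence of length $\ge m$ (viewed inside $H_i \cong C_{p^i}$), so Lemma \ref{5.5} bounds its cardinality by $(\phi(p^i)(p^i-1)+1)/\phi_{\lfloor p^i/m \rfloor}(p^i) \le 2m(p^i-1)+1$ for $m \le p^i/2$. Since each subsequent level-$i$ atom of length less than $m$ consumes at least one order-$p^i$ element, an argument parallel to the derivation of \eqref{equ6.7.2} inside the proof of Lemma \ref{5.7} produces $\sum_{k=2}^{m} k\, n_k^{(i)} \le 3(p^i-1)m$ for $m \le p^i/2$, and Lemma \ref{sum} then gives $\sum_{k=2}^{p^i/2} n_k^{(i)} \le 3(p^i-1)(1/2 + \log(p^i/2))$.

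For level-$i$ atoms of length greater than $p^i/2$, each such atom has length at least $(p^i+1)/2$, so $\sum_{k > p^i/2} n_k^{(i)} \le 2|T^{(i)}|/(p^i+1)$, where $|T^{(i)}|$ denotes the combined length of level-$i$ atoms. Writing $M_{ij}$ for the number of order-$p^j$ elements absorbed into level-$i$ atoms, one has $|T^{(i)}| = \sum_{j \le i} M_{ij}$ and $\sum_{i \ge j} M_{ij} = N_{p^j}$, so a double-summation using $1/(p^i+1) \le 1/(p^j+1)$ for $j \le i$ yields $\sum_{i=1}^{\alpha} \sum_{k > p^i/2} n_k^{(i)} \le \sum_{j=1}^{\alpha} 2N_{p^j}/(p^j+1)$. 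Adding the short- and long-atom bounds and absorbing the residual $(3/2)\sum_{i=1}^{\alpha}(p^i-1)$ into $-2\alpha + 2p^{\alpha+1}/(p-1)$ via the identity $2\sum_{i=1}^{\alpha}(p^i-1) = -2\alpha + 2p^{\alpha+1}/(p-1) - 2p/(p-1)$ produces the stated bound. The principal delicate point is the cross-level coupling caused by level-$i$ atoms absorbing order-$p^j$ elements for $j < i$; the double-summation above manages this redistribution exactly by converting the weights $2/(p^i+1)$ into the target weights $2/(p^j+1)$, and the constant $-2\alpha + 2p^{\alpha+1}/(p-1)$ provides comfortable slack for the residual short-atom contributions pooled across all levels.
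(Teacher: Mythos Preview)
Your single greedy factorization with level-stratification is a genuinely different route from the paper's proof, but the key inequality $\sum_{k=2}^{m} k\, n_k^{(i)} \le 3(p^i-1)m$ is not established by the argument you give. In Lemma~\ref{5.7}, equation~\eqref{equ6.7.2} holds because \emph{all} elements have order $n$; hence the remainder $R$ at the first moment a short atom is drawn consists entirely of order-$n$ elements, so $|R|$ \emph{equals} $\sum_{k<m} k n_k$, and Lemma~\ref{5.5} bounds $|R|$. In your setting the remainder contains elements of all orders $p^j$ with $j\le \alpha$, and a level-$i$ atom of length $k$ may have only two elements of order $p^i$ and up to $k-2$ elements of lower order. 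Lemma~\ref{5.5} applied inside $H_i$ bounds only the number of order-$p^i$ elements in the remainder; combined with ``each level-$i$ atom consumes at least one order-$p^i$ element'' this bounds the \emph{count} $\sum_{k<m} n_k^{(i)}$, not the weighted sum $\sum_{k<m} k\, n_k^{(i)}$. Without the weighted bound you cannot invoke Lemma~\ref{sum}, and the direct count bound yields only $\sum_{k\le p^i/2} n_k^{(i)} \lesssim p^i(p^i-1)$, far worse than the logarithmic term you need. For $p<m\le p^i/2$ the order-$p$ part of the remainder is not controlled at all by Lemma~\ref{5.5}, so there is no evident way to recover the missing length contribution within the single-greedy framework.

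The paper sidesteps this cross-level coupling by decomposing $S$ \emph{before} factoring: write $S=\prod_{i=1}^{\alpha} S_i$ with $S_i$ the subsequence of elements of exact order $p^i$, extract from each $S_i$ a zero-sum subsequence $T_i$ of maximal length (so the leftover $T_i'=S_iT_i^{-1}$ satisfies $|T_i'|\le p^i-1$), and observe that $\prod_i T_i'$ is itself zero-sum. Then $\min\mathsf L(S)\le \tfrac{1}{2}|\prod_i T_i'| + \sum_i \min\mathsf L(T_i)$, and since every element of $T_i$ has order exactly $p^i$, Lemma~\ref{5.7} (or Lemma~\ref{5.6} for $i=1$) applies to each $T_i$ verbatim, giving $\min\mathsf L(T_i)\le 2|T_i|/(p^i+1)+3(p^i-1)(1/2+\log(p^i/2))$. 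Summing and using $|T_i|\le N_{p^i}$ yields the claim. The leftover term $\tfrac{1}{2}\sum_i(p^i-1)$ together with the $\tfrac{3}{2}\sum_i(p^i-1)$ from the short-atom constants is what produces the $-2\alpha+2p^{\alpha+1}/(p-1)$; this is the same bookkeeping you identify, but the paper earns it without any cross-level interaction in the factorization step.
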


\begin{proof}
Clearly, it
suffices to prove that $\min\mathsf L (S)$ is bounded above by the second term in the above set.
For every $i\in [1,\alpha]$, let $S_i$ denote the subsequence of $S$
consisting of all terms with order $p^i$, let $T_i$ be a zero-sum subsequence of $S_i$ with maximal possible length, and set $T_i'=S_iT_i^{-1}$. Therefore
\[
S=  S_1 \cdot \ldots \cdot S_{\alpha} = \prod_{i=1}^{\alpha}T_i'\prod_{i=1}^{\alpha}T_i \quad \text{and} \quad
\prod_{i=1}^{\alpha}T_i' \ \text{ has sum zero}.
\]
By the maximality of $T_i$ we infer that $
|T_i'|\le p^i-1$
for every $i\in [1,\alpha]$. Hence,
\begin{equation}\label{eq6.8.1}
|\prod_{i=1}^{\alpha}T_i'|\le
\sum_{i=1}^{\alpha}(p^i-1) \le  p^{\alpha}\frac{p}{p-1}-\alpha \,.
\end{equation}
Therefore,
\[
\min \mathsf L (S)\le \min \mathsf L
(\prod_{i=1}^{\alpha}T_i')+\sum_{i=1}^{\alpha}\min \mathsf L
(T_i)\le
\frac{|\prod_{i=1}^{\alpha}T_i'|}{2}+\sum_{i=1}^{\alpha}\min \mathsf
L (T_i) \,.
\]
It follows from (\ref{eq6.8.1}) that
\begin{equation} \label{eq6.8.2}
\min \mathsf L (S)\le
\frac{p^{\alpha}\frac{p}{p-1}-\alpha}{2}+\sum_{i=1}^{\alpha}\min
\mathsf L (T_i) \,.
\end{equation}
By Lemma \ref{5.6}.2 and Lemma \ref{5.7} we obtain that
\[
\min \mathsf L (T_i)\le
\frac{2|T_i|}{p^i+1}+3(p^i-1)(\frac{1}{2}+\log \frac{p^i}{2})
\]
holds for every $i\in [1,\alpha]$. It follows from (\ref{eq6.8.1})
that
\[
\begin{array}{ll} \min \mathsf L (S) &\le
\frac{p^{\alpha}\frac{p}{p-1}-\alpha}{2}+\sum_{i=1}^{\alpha}(\frac{2|T_i|}{p^i+1}+3(p^i-1)(\frac{1}{2}+\log
\frac{p^i}{2})) \\
&=\frac{p^{\alpha+1}-\alpha(p-1)}{2(p-1)}+\sum_{i=1}^{\alpha}\frac{2|T_i|}{p^i+1}+
\frac{3}{2}\sum_{i=1}^{\alpha}(p^i-1)+3\sum_{i=1}^{\alpha}(p^i-1)\log\frac{p^i}{2}
\\ & =2\frac{p^{\alpha+1}-\alpha(p-1)}{p-1}+\sum_{i=1}^{\alpha}\frac{2|T_i|}{p^i+1}+
3\sum_{i=1}^{\alpha}(p^i-1)\log\frac{p^i}{2} \\ & \le
2\frac{p^{\alpha+1}-\alpha(p-1)}{p-1}+\sum_{i=1}^{\alpha}\frac{2N_{p^i}}{p^i+1}+
3\sum_{i=1}^{\alpha}(p^i-1)\log\frac{p^i}{2} \\ &
=-2\alpha+\frac{2p^{\alpha+1}}{p-1}+\sum_{i=1}^{\alpha}\frac{2N_{p^i}}{p^i+1}+
3\sum_{i=1}^{\alpha}(p^i-1)\log\frac{p^i}{2} \,. \qedhere
\end{array}
\]
\end{proof}

We need certain bounds for Legendre's totient function. We establish what we need in the two subsequent lemmas in a self-contained way.

\begin{lemma} \label{5.9}
Let $n=p_1^{\alpha_1}\cdot \ldots \cdot
p_s^{\alpha_s}$,  where $s\ge 2$, $\alpha_1, \ldots, \alpha_s \in \mathbb N$, and $p_1, \ldots, p_s \in \mathbb P$ are distinct. If $m\ge 2^{s+1}\sqrt{2s-1}$ then $\phi_m(n) \ge
\frac{m}{2}\prod_{i=1}^s(1-\frac{1}{p_i})=\frac{m\phi(n)}{2n}$.
\end{lemma}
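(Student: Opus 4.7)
The plan is to apply inclusion--exclusion on the radical of $n$ and then absorb the resulting error term using the hypothesized lower bound on $m$. Setting $P = p_1 \cdots p_s$, the integers in $[1,m]$ coprime to $n$ are exactly those coprime to $P$. By M\"obius inversion,
\[
\phi_m(n) \ = \ \sum_{d \mid P} \mu(d) \lfloor m/d \rfloor \ = \ m \prod_{i=1}^{s}\bigl(1 - 1/p_i\bigr) \ - \ E,
\]
where $E = \sum_{d \mid P,\, d > 1} \mu(d) \{m/d\}$; the $d=1$ term drops out because $m \in \N$ gives $\{m\}=0$.

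Next I would bound $E$ from above. Only the $d > 1$ with $\mu(d) = +1$, namely the squarefree divisors of $P$ with a positive even number of prime factors, can contribute positively. There are exactly $\binom{s}{2} + \binom{s}{4} + \cdots = 2^{s-1} - 1$ such divisors, and each contributes $\{m/d\} < 1$, while the terms with $\mu(d)=-1$ contribute non-positively. Hence $E < 2^{s-1}$, and consequently
\[
\phi_m(n) \ > \ m \prod_{i=1}^{s}(1 - 1/p_i) \ - \ 2^{s-1}.
\]
To derive the claimed bound $\phi_m(n) \ge (m/2)\prod_{i=1}^{s}(1-1/p_i)$ it now suffices that $(m/2)\prod(1-1/p_i) \ge 2^{s-1}$, i.e.\ $m \ge 2^s/\prod(1-1/p_i)$. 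Given the hypothesis $m \ge 2^{s+1}\sqrt{2s-1}$, this in turn reduces to the elementary number-theoretic estimate
\[
\prod_{i=1}^{s}\bigl(1 - 1/p_i\bigr) \ \ge \ \frac{1}{2\sqrt{2s-1}}. \qquad (\star)
\]

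The main obstacle is $(\star)$, which is essentially a crude Mertens-type estimate. Since the product is minimized by taking the smallest admissible primes, it suffices to prove $(\star)$ when $p_i = q_i$ is the $i$-th prime. I would proceed by induction on $s \ge 2$. The base case $s = 2$ reduces to $(1-1/2)(1-1/3) = 1/3 \ge 1/(2\sqrt{3})$, a direct check. For the inductive step from $s-1$ to $s$, it is enough to show $(1 - 1/q_s) \ge \sqrt{(2s-3)/(2s-1)}$, which after squaring and rearranging is equivalent to $2 q_s^2 - 2(2s-1) q_s + (2s-1) \ge 0$, i.e.\ $q_s \ge \bigl((2s-1) + \sqrt{(2s-1)(2s-3)}\bigr)/2$. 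Since $\sqrt{(2s-1)(2s-3)} < 2s-1$, the right-hand side is strictly less than $2s-1$, so it is enough to verify $q_s \ge 2s-1$ for every $s \ge 2$. This last fact follows from a tiny counting argument: among $\{2,3,\ldots,2s-2\}$ the $s-2$ even numbers other than $2$ are all composite, so at most $(2s-3) - (s-2) = s-1$ elements of this set are prime, forcing the $s$-th prime to exceed $2s-2$.
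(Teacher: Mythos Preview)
Your proof is correct and follows essentially the same route as the paper: inclusion--exclusion to obtain $\phi_m(n) \ge m\prod_{i=1}^s(1-1/p_i) - 2^{s-1}$, followed by the reduction to the estimate $\prod_{i=1}^s(1-1/p_i) \ge 1/(2\sqrt{2s-1})$ using the bound $p_i \ge 2i-1$. The only cosmetic difference is in how $(\star)$ is established: you argue by induction on $s$, whereas the paper multiplies $\prod_{i=2}^s\frac{2i-2}{2i-1}$ against the telescoping companion $\prod_{i=2}^s\frac{2i-3}{2i-2}$ to get $1/(2s-1)$ directly.
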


\begin{proof}
By the inclusion-exclusion principle we know that
\[
\begin{array}{ll} \phi_m(n) &=m-\sum_{i=1}\lfloor\frac{m}{p_i}\rfloor+\sum_{1\le
i<j\le
s}\lfloor\frac{m}{p_ip_j}\rfloor-\ldots+(-1)^s\lfloor\frac{m}{p_1\cdot \ldots \cdot
p_s}\rfloor \\ & \ge m-\sum_{i=1}\frac{m}{p_i}+\sum_{1\le i<j\le
s}(\frac{m}{p_ip_j}-1)-\sum_{1\le
i<j<k}\frac{m}{p_ip_jp_k}+\sum_{1<i<j<k<l}(\frac{m}{p_ip_jp_kp_l}-1)-\ldots
\\ &=m\prod_{i=1}^s(1-\frac{1}{p_i})-  ({s \choose 2} +{s \choose
4}+\ldots)=m\prod_{i=1}^s(1-\frac{1}{p_i})-2^{s-1} \,.
\end{array}
\]
Therefore,
\begin{equation} \label{eq6.10.1}
\phi_m(n)\ge m\prod_{i=1}^s(1-\frac{1}{p_i})-2^{s-1} \,.
\end{equation}
It is easy to see that $p_i\ge 2i-1$ for all $i\in [2,s]$.
Therefore,
\begin{equation} \label{eq6.10.2}
\prod_{i=1}^s(1-\frac{1}{p_i})\ge
\frac{1}{2}\prod_{i=2}^s\frac{2i-2}{2i-1} \,.
\end{equation}
Since $\frac{2i-2}{2i-1}\ge \frac{2i-3}{2i-2}$ holds for every
$i\in [2,s]$, we obtain that
\[
(\prod_{i=2}^s\frac{2i-2}{2i-1})^2 \ge
\prod_{i=2}^s\frac{2i-2}{2i-1}\prod_{i=2}^s\frac{2i-3}{2i-2}=\prod_{i=1}^{2s-2}\frac{i}{i+1}=\frac{1}{2s-1}.
\]
It follows that
\begin{equation} \label{eq6.10.3}
\prod_{i=2}^s\frac{2i-2}{2i-1}\ge \frac{1}{\sqrt{2s-1}}\,, \quad \text{and hence by (\ref{eq6.10.2})} \qquad
\prod_{i=1}^s(1-\frac{1}{p_i}) \ge \frac{1}{2\sqrt{2s-1}} \,.
\end{equation}
Since $m\ge 2^{s+1}\sqrt{2s-1}$, from  (\ref{eq6.10.3}) we deduce
that $\frac m2 \prod_{i=1}^s(1-\frac{1}{p_i})\ge 2^{s-1}$. It
follows from (\ref{eq6.10.1}) that $\phi_m(n)\ge
m\prod_{i=1}^s(1-\frac{1}{p_i})-2^{s-1}\ge \frac
m2\prod_{i=1}^s(1-\frac{1}{p_i})$.
\end{proof}

\medskip
\begin{lemma} \label{5.10}
Let $n=p_1^{\alpha_1}\cdot \ldots \cdot
p_s^{\alpha_s}$,  where $s\ge 2$, $\alpha_1, \ldots, \alpha_s \in \mathbb N$, and $p_1, \ldots, p_s \in \mathbb P$ are distinct. If $t\le \frac{n+1}{2^{s+1}\sqrt{2s-1}+1}$ then
$\phi_{\lfloor\frac nt\rfloor}(n) \ge
\frac{n}{2.2t}\prod_{i=1}^s(1-\frac{1}{p_i})=\frac{\phi(n)}{2.2t}$.
\end{lemma}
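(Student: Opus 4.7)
My approach is to reduce the claim to Lemma~\ref{5.9} applied to $m := \lfloor n/t\rfloor$, and then to extract the improved constant factor $1/2.2$ (as opposed to $1/2$) from the slack present in the hypothesis. So the plan has two steps: verify that $m$ meets the threshold required by Lemma~\ref{5.9}, and then upgrade the resulting lower bound by a constant factor using the gap between $(n+1)/t$ and $2^{s+1}\sqrt{2s-1}+1$.

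For the first step, I would start from the elementary inequality $\lfloor n/t\rfloor\, t \ge n - (t-1)$, which rewrites as $(m+1)t \ge n+1$, that is $m+1 \ge (n+1)/t$. The hypothesis $t \le (n+1)/(2^{s+1}\sqrt{2s-1}+1)$ rearranges to $(n+1)/t \ge 2^{s+1}\sqrt{2s-1}+1$, so $m \ge 2^{s+1}\sqrt{2s-1}$. This is exactly the assumption of Lemma~\ref{5.9}, which delivers
\[
\phi_m(n) \ge \frac{m}{2}\prod_{i=1}^s\Bigl(1-\frac{1}{p_i}\Bigr).
\]

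For the second step, it suffices to verify $m/2 \ge n/(2.2\, t)$, equivalently $11\, mt \ge 10 n$. From $mt \ge n - t + 1$ this reduces further to $t \le (n+11)/11$. Since $s \ge 2$, the denominator in the hypothesis satisfies $2^{s+1}\sqrt{2s-1}+1 \ge 8\sqrt{3}+1 > 11$, so the hypothesis already gives the sharper bound $t \le (n+1)/11 \le (n+11)/11$. Combining the two steps, $\phi_m(n) \ge (n/(2.2\, t))\prod_{i=1}^s(1-1/p_i)$, and the final equality is just the standard identity $\phi(n) = n\prod_{i=1}^s(1-1/p_i)$.

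I do not expect any real obstacle: the entire argument is bookkeeping with floor functions and linear inequalities, with Lemma~\ref{5.9} doing all the combinatorial work. The only point requiring attention is keeping the three constants $2.2$, $10/11$, and the threshold $2^{s+1}\sqrt{2s-1}+1$ consistent, and the inequality $8\sqrt{3}+1 > 11$ at the critical value $s=2$ provides exactly the slack needed for the second step to go through.
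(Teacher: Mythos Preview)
Your proof is correct and follows essentially the same approach as the paper's own proof: both verify the hypothesis of Lemma~\ref{5.9} via the floor estimate $\lfloor n/t\rfloor \ge (n-t+1)/t$, apply that lemma, and then use $s\ge 2$ to get $t < (n+11)/11$, which turns the factor $(n-t+1)/(2t)$ into $n/(2.2t)$. The only difference is cosmetic organization.
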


\begin{proof}
From $t\le \frac{n+1}{2^{s+1}\sqrt{2s-1}+1}$ we
obtain that $\lfloor\frac nt\rfloor \ge \frac{n-t+1}{t} \ge
2^{s+1}\sqrt{2s-1}.$  Lemma \ref{5.9} implies that
\begin{equation}\label{eq6.11.1}
\phi_{\lfloor\frac nt\rfloor}(n)\ge \frac{1}{2}\lfloor\frac
nt\rfloor\frac{\phi(n)}{n}\ge \frac{n-t+1}{2t}\frac{\phi(n)}{n} \,.
\end{equation}
Since $s\ge 2$, we infer that $t\le
\frac{n+1}{2^{s+1}\sqrt{2s-1}+1}\le \frac{n+1}{8\sqrt{3}+1}
<\frac{n+11}{11}$. Therefore, $\frac{n-t+1}{2t}\ge \frac{n}{2.2t}$
and the result follows from (\ref{eq6.11.1}).
\end{proof}

\medskip
\begin{lemma} \label{5.11}
Let $n=p_1^{\alpha_1}\cdot \ldots \cdot
p_s^{\alpha_s}$,  where $s\ge 2$, $\alpha_1, \ldots, \alpha_s \in \mathbb N$, and $p_1, \ldots, p_s \in \mathbb P$ are distinct, and let $S$ be a zero-sum sequence over $G$ such that $\ord (g) = n$ for all $g \in \supp (S)$. Then $\min \mathsf L (S)\le \min
\{\frac{|S|}{2},\frac{|S|}{\lfloor\frac{n+1}{2^{s+1}\sqrt{2s-1}+1}\rfloor}+3.3(n-1)(\frac{1}{2}+\log
(\lfloor\frac{n+1}{2^{s+1}\sqrt{2s-1}+1}-1\rfloor))\}$.
\end{lemma}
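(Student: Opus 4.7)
The plan is to mirror the argument of Lemma \ref{5.7}, substituting the Legendre totient bound of Lemma \ref{5.10} (valid for a general composite $n$ with $s \ge 2$ distinct prime factors) for the prime-power estimate used there. The bound $\min \mathsf L (S) \le |S|/2$ is immediate because every atom of $\mathcal B (G)$ has length at least $2$. For the second term of the minimum I would produce a short factorization greedily: write $S = U_1 \cdot \ldots \cdot U_t$, where each $U_i$ is a minimal zero-sum subsequence of $S(U_1 \cdot \ldots \cdot U_{i-1})^{-1}$ of maximal possible length. Then $\min \mathsf L (S) \le t$, so the task reduces to bounding $t$ from above.

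Writing $n_k$ for the number of $U_i$ of length exactly $k$, I would next establish, for every $m \in [3, n-1]$, the inequality
\[
\sum_{i=2}^{m-1} i \, n_i \le \frac{\phi(n)(n-1)}{\phi_{\lfloor n/m \rfloor}(n)}
\]
exactly as in Lemma \ref{5.7}: the maximality of the greedy choice forces the residual at the first step producing an atom of length $< m$ to admit no minimal zero-sum subsequence of length $\ge m$, which by the contrapositive of Lemma \ref{5.5} bounds its length by the right-hand side (the hypothesis $\ord(g)=n$ on each $g \in \supp(S)$ is inherited by every residual, so Lemma \ref{5.5} is applicable throughout).

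Setting $M = \lfloor (n+1)/(2^{s+1}\sqrt{2s-1}+1) \rfloor$ and applying Lemma \ref{5.10} for $m \in [3, M]$, I obtain $\phi_{\lfloor n/m \rfloor}(n) \ge \phi(n)/(2.2m)$, so the displayed inequality becomes $\sum_{i=2}^{m-1} i\,n_i \le 2.2m(n-1) \le 3.3(m-1)(n-1)$; the latter estimate is valid precisely because $m \ge 3$. After re-indexing $m' = m-1$, this reads $\sum_{i=2}^{m'} i\,n_i \le 3.3 m'(n-1)$ for $m' \in [2, M-1]$, which is exactly the form needed to invoke Lemma \ref{sum} with $C = 3.3(n-1)$; that lemma delivers
\[
\sum_{i=2}^{M-1} n_i \le 3.3(n-1)\Bigl(\tfrac{1}{2} + \log(M-1)\Bigr).
\]
For the remaining $U_i$ of length at least $M$, the crude bound $\sum_{i \ge M} i\,n_i \le |S|$ gives $\sum_{i \ge M} n_i \le |S|/M$. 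Adding these two estimates and noting $M-1 = \lfloor (n+1)/(2^{s+1}\sqrt{2s-1}+1) - 1 \rfloor$ reproduces exactly the stated bound on $t$.

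The main step requiring care is the greedy--Lemma \ref{5.5} inequality: one must justify that once the greedy procedure returns an atom shorter than $m$, no longer atom is available thereafter, so the residual length at that step falls below the Lemma \ref{5.5} threshold. The rest is plugging Lemma \ref{5.10} into the framework already set up for Lemma \ref{5.7}, and the constant worsening from $3$ to $3.3$ merely reflects the factor $2.2$ in Lemma \ref{5.10} in place of the factor $2$ available in the prime-power setting; any degenerate small-$n$ cases where $M < 3$ are harmlessly absorbed by the $|S|/2$ alternative in the minimum.
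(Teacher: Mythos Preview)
Your proposal is correct and follows essentially the same approach as the paper: the greedy factorization, the Lemma~\ref{5.5} bound on $\sum_{i=2}^{m-1} i\,n_i$, the substitution of Lemma~\ref{5.10} to obtain $2.2m(n-1)\le 3.3(m-1)(n-1)$ for $m\ge 3$, the application of Lemma~\ref{sum}, and the final split at $M=\lfloor u\rfloor$ are all exactly what the paper does. Your explicit remark about the degenerate case $M<3$ being absorbed by $|S|/2$ is a small clarification the paper omits, but otherwise the arguments coincide.
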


\begin{proof}
For ease of notation, set $u=\frac{n+1}{2^{s+1}\sqrt{2s-1}+1}$.
It suffices to
show that there is a factorization  $S = U_1 \cdot \ldots
\cdot U_t$, where $U_1, \ldots, U_t \in \mathcal A (G)$
and $t$ is bounded above by the second term in the above set, and
again we construct $U_1, \ldots, U_t$ recursively. For $i \in [1,t]$, let $U_i$
be a minimal zero-sum subsequence  of $S(\prod_{j=1}^{i-1}U_j)^{-1}$ with maximal possible length. We use
Lemma \ref{5.5} and Lemma \ref{5.10} to study $t$.
For every $k\in [2,n]$, let $n_k$ be the number of $U_i$ such that
$|U_i|=k$. For every $m\in [3,n-1]$, the construction of $U_i$ and
Lemma \ref{5.5} imply that
\begin{equation} \label{eq6.12.1}
\sum_{i=2}^{m-1}in_i\le \frac{\phi(n)(n-1)}{\phi_{\lfloor\frac
nm\rfloor}(n)}
\end{equation}
If $m\le u$, then by Lemma \ref{5.10}
we have $\phi_{\lfloor\frac nm\rfloor}(n) \ge
\frac{\phi(n)}{2.2m}$. It follows from (\ref{eq6.12.1}) that
\[
\sum_{i=2}^{m-1}in_i \le  2.2m(n-1)\le 3.3(m-1)(n-1) \,.
\]
Therefore, for every $m\in [2,u-1]$, we have
\begin{equation} \label{eq6.12.2}
\sum_{i=2}^min_i \le 3.3(n-1)m \,.
\end{equation}
Applying Lemma \ref{sum} we deduce that
\[
\sum_{i=2}^mn_i\le 3.3(1+\frac{1}{3}+\frac{1}{4}+\ldots
+\frac{1}{m})(n-1)
\]
holds for every $m\in [2,u-1]$.
Especially,
\[
\sum_{i=2}^{\lfloor u-1\rfloor}n_i\le
3.3(n-1)(\frac{1}{2}+\sum_{i=2}^{\lfloor u-1\rfloor}\frac{1}{i})\le
3.3(n-1)(\frac{1}{2}+\log
(\lfloor u-1\rfloor)) \,.
\]
Since  $\sum_{i\ge
\lfloor u \rfloor}in_i\le |S|$ ,
$\sum_{i\ge \lfloor u \rfloor}n_i\le
\frac{|S|}{\lfloor u\rfloor}$. Hence,
\[
t = \sum_{i=2}^nn_i\le
\frac{|S|}{\lfloor u \rfloor}+3.3(n-1)(\frac{1}{2}+\log
(\lfloor u-1\rfloor))\,. \qedhere
\]
\end{proof}

\medskip
\begin{lemma} \label{5.12}
Let $n=p_1^{\alpha_1}\cdot \ldots \cdot
p_s^{\alpha_s}$, where $s\ge 2$, $\alpha_1, \ldots, \alpha_s \in \mathbb N$,  and $p_1, \ldots, p_s \in \mathbb P$ are distinct. If $n\ge 4376$, then
$\lfloor\frac{n+1}{2^{s+1}\sqrt{2s-1}+1}\rfloor \ge
\frac{n}{2^{s+1}\sqrt{2s}}.$
\end{lemma}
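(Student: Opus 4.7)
Setting $A = 2^{s+1}\sqrt{2s-1}$ and $B = 2^{s+1}\sqrt{2s}$, the inequality to prove is $\lfloor(n+1)/(A+1)\rfloor \ge n/B$. Since $\lfloor x \rfloor + 1 \ge x$ for every real $x$, it suffices to establish the stronger numerical inequality $(n+1)/(A+1) \ge n/B + 1$, which after clearing denominators becomes $n(B-A-1) \ge AB$. From $B-A = 2^{s+1}/(\sqrt{2s}+\sqrt{2s-1}) \ge 2^{s}/\sqrt{2s}$ one sees that $B-A-1 \ge 1 > 0$ for every $s \ge 2$, so the plan reduces to verifying $n \ge AB/(B-A-1)$ in every case.

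I would split according to the number $s$ of prime factors. For the small cases $s \in \{2,3,4,5\}$, I would rationalize $AB/(B-A-1)$ explicitly; for instance, when $s=5$ one has $A = 192$ and $B = 64\sqrt{10}$, so $AB/(B-A-1) = 12288\sqrt{10}/(64\sqrt{10}-193) = 12288(640+193\sqrt{10})/3711$, which is strictly less than $4376$. The analogous direct computations for $s\in\{2,3,4\}$ give the still smaller upper bounds (roughly $194$, $582$, $1580$), so in every case $n \ge 4376$ already suffices.

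For $s \ge 6$, the bound $n \ge 4376$ is no longer enough, and I would exploit that $n$ has $s$ distinct prime divisors, hence is at least the product of the first $s$ primes $2\cdot 3\cdot 5 \cdots$. Using the elementary estimates $\sqrt{2s(2s-1)} \le 2s$, $\sqrt{2s}+\sqrt{2s-1} \le 2\sqrt{2s}$, and $2^{s+1}-2\sqrt{2s} \ge 2^{s}$ (which follows from $4^{s-1} \ge 2s$, true for $s \ge 2$), one obtains the clean upper bound $AB/(B-A-1) \le 2^{s+4}\sqrt{2}\,s^{3/2}$. A direct check at $s=6$ gives $2\cdot 3\cdot 5\cdot 7\cdot 11\cdot 13 = 30030 > 2^{10}\sqrt{2}\cdot 6^{3/2}$, and the inequality then propagates from $s$ to $s+1$ by induction: the primorial is multiplied by the $(s+1)$-st prime, which is at least $17$ for $s \ge 6$, while the upper bound multiplies only by $2((s+1)/s)^{3/2} \le 2(7/6)^{3/2} < 2.52$.

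The main delicacy will be the case $s=5$: the clean asymptotic bound $2^{s+4}\sqrt{2}\,s^{3/2}$ evaluates to roughly $8100$ there, which exceeds $4376$, so the explicit rationalization cannot be avoided. This is precisely why the hypothesis fixes the threshold at $n \ge 4376$, which sits just above the exact value of $AB/(B-A-1)$ at $s=5$ (approximately $4140$); for $s \le 4$ the ratio is much smaller, and for $s \ge 6$ the primorial lower bound takes over.
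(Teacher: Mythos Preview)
Your proof is correct and follows essentially the same route as the paper: both reduce the floor inequality to the stronger bound $n(B-A-1)\ge AB$ (which is exactly the paper's displayed inequality \eqref{eq6.13.1}), verify the small cases $s\in\{2,3,4,5\}$ by direct computation, and for $s\ge 6$ appeal to the primorial lower bound on $n$ together with an induction step using $p_{s+1}\ge 17$. Your derivation of the clean intermediate estimate $AB/(B-A-1)\le 2^{s+4}\sqrt{2}\,s^{3/2}$ makes the induction slightly tidier than the paper's direct manipulation of \eqref{eq6.13.1}, but the substance is the same.
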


\begin{proof}
Clearly, it suffices to prove that
\[
\frac{n+1}{2^{s+1}\sqrt{2s-1}+1}\ge \frac{n}{2^{s+1}\sqrt{2s}}+1
\]
which will follow from
\begin{equation} \label{eq6.13.1}
(\frac{2^{s+1}}{\sqrt{2s}+\sqrt{2s-1}}-1)n\ge
2^{2s+2}\sqrt{2s(2s-1)}.
\end{equation}
By a straightforward computation we get that (\ref{eq6.13.1}) holds
for  ($s=2$ and $n\ge 250$), for
($s=3$ and $n\ge 656$), for
($s=4$ and $n\ge 1707$), and for
($s=5$ and $n\ge 4376$).
If $s=6$, then $n\ge 2\times 3\times 5\times 7\times 11\times
13=30030$. Again by a straightforward computation we get that
(\ref{eq6.13.1}) holds. Now we proceed by induction. Assume that (\ref{eq6.13.1}) holds for some
$s \ge 6$. Then it holds for $s+1$ because
\[
\begin{array}{ll} (\frac{2^{s+2}}{\sqrt{2(s+1)}+\sqrt{2s+1}}-1)n &\ge
(\frac{2^{s+1}}{\sqrt{2s}+\sqrt{2s-1}}-1)\frac{n}{p_{s+1}^{\alpha_{s+1}}}p_{s+1}^{\alpha_{s+1}}
\\ &\ge
(2^{2s+2}\sqrt{2s(2s-1)})p_{s+1}^{\alpha_{s+1}}
\\ &\ge (2^{2s+2}\sqrt{2s(2s-1)})\times 17 \\ &\ge
2^{2s+4}\sqrt{2(s+1)(2s+1)}. \qquad \qquad \qquad \qquad \qquad \qquad \qquad \qquad \qquad \qquad \quad \qedhere \end{array}
\]
\end{proof}

\begin{lemma} \label{5.13}
Let $n=p_1^{\alpha_1}\cdot \ldots \cdot
p_r^{\alpha_r}$,  where $r\ge 2$, $\alpha_1, \ldots, \alpha_r \in \mathbb N$, and $p_1, \ldots, p_r \in \mathbb P$ are distinct, and let $S$ be a zero-sum sequence over
$G^{\bullet}$. For every divisor $d>1$ of $n$, let $N_d$ denote
the number of the terms of $S$ which have order $d$.
Then $\min \mathsf L (S) \le \min \{\frac{|S|}{2},
\frac{4.3}{2}\sum_{1<d|n}(d-1)+\sum_{1<d|n,d\le
4375}\frac{N_d}{2}+\sum_{4376\le
d|n}\frac{2^{\omega(d)+1}\sqrt{2\omega(d)}N_d}{d}+3.3\sum_{4376\le
d|n}(d-1) \log
(\lfloor\frac{d+1}{2^{\omega(d)+1}\sqrt{2\omega(d)-1}+1}-1\rfloor)\}$.
\end{lemma}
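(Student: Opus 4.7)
The plan is to follow the blueprint of Lemmas \ref{5.7} and \ref{5.8}: decompose $S$ by the orders of its terms and apply the appropriate factorization bound to each piece.

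First, I would write $S = \prod_{1 < d \mid n} S_d$, where $S_d$ is the subsequence of all terms of $S$ of order exactly $d$, so that $|S_d| = N_d$. For each $d > 1$ with $S_d \ne 1$, I extract a zero-sum subsequence $T_d$ of $S_d$ of maximal possible length and set $T_d'' = S_d T_d^{-1}$. Maximality forces $T_d''$ to be zero-sum free, so $|T_d''| \le \mathsf D(C_d) - 1 = d - 1$. Since $S$ and each $T_d$ are zero-sum sequences, so is $\prod_d T_d''$, and therefore
\[
\min \mathsf L(S) \le \min \mathsf L\Bigl(\prod_{1 < d \mid n} T_d''\Bigr) + \sum_{1 < d \mid n} \min \mathsf L(T_d) \le \tfrac{1}{2} \sum_{1 < d \mid n}(d-1) + \sum_{1 < d \mid n} \min \mathsf L(T_d).
\]

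Next, I would bound each $\min \mathsf L(T_d)$ by cases on the size of $d$, observing that $T_d$ lies in the unique cyclic subgroup of $G$ of order $d$ and every term of $T_d$ has order exactly $d$. For $d \le 4375$ I use the trivial bound $\min \mathsf L(T_d) \le |T_d|/2 \le N_d/2$. For $d \ge 4376$ with $\omega(d) \ge 2$, I apply Lemma \ref{5.11} inside $C_d$ and then Lemma \ref{5.12} to replace the occurrence of $\lfloor (d+1)/(2^{\omega(d)+1}\sqrt{2\omega(d)-1}+1) \rfloor$ in the first summand by the cleaner lower bound $d/(2^{\omega(d)+1}\sqrt{2\omega(d)})$, which produces
\[
\min \mathsf L(T_d) \le \frac{2^{\omega(d)+1}\sqrt{2\omega(d)}\,N_d}{d} + 3.3(d-1)\Bigl(\tfrac{1}{2} + \log \bigl\lfloor \tfrac{d+1}{2^{\omega(d)+1}\sqrt{2\omega(d)-1}+1} - 1 \bigr\rfloor\Bigr).
\]
For the residual sub-case $d \ge 4376$ with $\omega(d) = 1$ (so $d$ is a prime power), I invoke Lemma \ref{5.7} in place of Lemma \ref{5.11} and verify that its output is still majorised by the displayed expression, using $2/(d+1) \le 4\sqrt{2}/d$ and the passage from the constant $3$ to $3.3$ in the logarithmic contribution.

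Finally, I would assemble the pieces. Combining the $\tfrac{1}{2}(d-1)$ coming from $T_d''$ with the constant term $3.3(d-1) \cdot \tfrac{1}{2} = 1.65(d-1)$ produced by the estimate for each $d \ge 4376$ gives the coefficient $\tfrac{1}{2} + 1.65 = \tfrac{4.3}{2}$, and for $d \le 4375$ I simply majorise $\tfrac{1}{2}(d-1) \le \tfrac{4.3}{2}(d-1)$, so that the single term $\tfrac{4.3}{2}\sum_{1 < d \mid n}(d-1)$ absorbs the $T_d''$-contributions for all divisors uniformly. Together with the trivial bound $\min \mathsf L(S) \le |S|/2$ (every atom has length at least two), this yields the claimed estimate. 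The main technical obstacle is the prime-power sub-case $\omega(d)=1$ with $d \ge 4376$, where Lemma \ref{5.11} is not directly applicable: here one must compare Lemma \ref{5.7}'s bound term by term against the stated formula, and verify that the $\tfrac{1}{2}$ from $T_d''$ and the $1.65$ inside the log expression really do combine to $\tfrac{4.3}{2}$ with no further loss.
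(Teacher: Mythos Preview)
Your proposal is correct and follows essentially the same route as the paper: the same decomposition $S=\prod_d S_d$ into pieces by order, extraction of a maximal zero-sum $T_d$ with zero-sum-free remainder of length at most $d-1$, the trivial bound for $d\le 4375$, Lemmas \ref{5.11} and \ref{5.12} for $d\ge 4376$ with $\omega(d)\ge 2$, and the same bookkeeping that merges the $\tfrac12(d-1)$ with the $3.3\cdot\tfrac12(d-1)$ into $\tfrac{4.3}{2}(d-1)$. The one small omission is that Lemma \ref{5.7} as stated requires $\alpha\ge 2$, so for $d=p$ a prime with $p\ge 4376$ you should also invoke Lemma \ref{5.6}.2 (whose bound is of the same shape but with even smaller constants), exactly as the paper does.
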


\begin{proof}
It suffices to show that $\min \mathsf L (S)$ is bounded above by the second  term in the above set.
For every $1<d|n$, let $S_d$ denote the subsequence of $S$
consisting of all terms with order $d$, let $T_d$ be a
zero-sum subsequence of $S_d$ with maximal possible length, and set $T_d'=S_dT_d^{-1}$.
Therefore
\[
S=\prod_{1<d|n}T_d'\prod_{1<d|n}T_d \quad \text{and} \quad \prod_{1<d|n}T_d' \quad \text{ has sum zero}.
\]
By the maximality of $T_d$ we
infer that
$
|T_d'|\le d-1
$
for every $1<d|n$, and hence,
\begin{equation}\label{eq6.14.1}
|\prod_{1<d|n}T_d'|\le \sum_{1<d|n}(d-1).
\end{equation}
Therefore,
\[
\min \mathsf L (S)\le \min \mathsf L
(\prod_{1<d|n}T_d')+\sum_{1<d|n}\min \mathsf L (T_d)\le
\frac{|\prod_{1<d|n}T_d'|}{2}+\sum_{1<d|n}\min \mathsf L (T_d).
\]
It follows from (\ref{eq6.14.1}) that
\begin{equation} \label{eq6.14.2}
\min \mathsf L (S)\le \frac{\sum_{1<d|n}(d-1)}{2}+\sum_{1<d|n}\min
\mathsf L (T_d).
\end{equation}
If $\omega(d)\ge 2$ then by Lemma \ref{5.11} and Lemma \ref{5.12}
we obtain that
\begin{equation} \label{eq6.14.3}
\min \mathsf L (T_d)\le
\frac{2^{\omega(d)+1}\sqrt{2\omega(d)}N_d}{d}+3.3(d-1)(\frac{1}{2}+
\log
(\lfloor\frac{d+1}{2^{\omega(d)+1}\sqrt{2\omega(d)-1}+1}-1\rfloor))
\end{equation}
holds for every $4376\le d |n$.
By Lemma \ref{5.6}.2 and Lemma \ref{5.7} we obtain that
(\ref{eq6.14.3}) is true for all $1<d|n$ with $\omega(d)=1$.
It follows from
(\ref{eq6.14.2}) that
\[
\begin{array}{ll} & \min \mathsf L (S) \le
\frac{\sum_{1<d|n}d-1}{2}+\sum_{1<d|n}\min
\mathsf L (T_d) \\
& \le \frac{\sum_{1<d|n}d-1}{2}+\sum_{1<d|n, d\le
4375}\frac{N_d}{2}+\sum_{4376\le
d|n}(\frac{2^{\omega(d)+1}\sqrt{2\omega(d)}N_d}{d}+3.3(d-1)
(\frac{1}{2}+\log
(\lfloor\frac{d+1}{2^{\omega(d)+1}\sqrt{2\omega(d)-1}+1}-1\rfloor))
\\ & \le \frac{4.3}{2}\sum_{1<d|n}(d-1)+\sum_{1<d|n,d\le
4375}\frac{N_d}{2}+\sum_{4376\le
d|n}\frac{2^{\omega(d)+1}\sqrt{2\omega(d)}N_d}{d}
\\ &+3.3\sum_{4376\le d|n}(d-1) \log
(\lfloor\frac{d+1}{2^{\omega(d)+1}\sqrt{2\omega(d)-1}+1}-1\rfloor) \,. \quad \qquad \qquad  \qquad \qquad \qquad \qquad \qquad \quad \qquad \qquad \qedhere
\end{array}
\]
\end{proof}

\begin{proof}[Proof of Theorem \ref{5.2}]
We may suppose that $H$ is reduced and that $H \hookrightarrow \mathcal F (P)$ is a divisor theory with class group $G$. Let $u, v_1, \ldots, v_m, u_2, \ldots, u_{\ell} \in \mathcal A (H)$ be such that $u \t v_1 \cdot \ldots \cdot v_m$, but $u$ divides no proper subproduct, that $v_1 \cdot \ldots \cdot v_m = u u_2 \cdot \ldots \cdot u_{\ell}$, and that $\max \{\ell, m \} = \mathsf t (H, u) = \mathsf t (H)$. By Lemma \ref{4.1}.2, we have $\mathsf t (H) \ge \mathsf t (G) > n \ge |u| \ge m$, and hence we get $1 + \min \mathsf L (w) = \ell = \mathsf t (H) > n$, where $w = u^{-1}v_1 \cdot \ldots \cdot v_m$.
For $i \in [1,m]$, we set $v_i = s_ia_i$ with $a_i, s_i \in \mathcal F (P) \setminus \{1\}$ and $A_i = \boldsymbol \beta (a_i)$. We set $S = A_1 \cdot \ldots \cdot A_m$, and observe that $S = \boldsymbol \beta (w)$ and that $|S| \le m (n-1) \le n(n-1)$. We have to show that $1 + \min \mathsf L (S)$ is bounded above by the terms given in the statement of the theorem.

\medskip
\noindent CASE 1: \, $n = p \in \mathbb P$.

This follows from  Lemma \ref{5.6}.2.

\medskip
\noindent CASE 2: \, $n = p^{\alpha}$, where  $p \in \mathbb P$ and  $\alpha \ge 2$.

For every divisor $d>1$ of $n$, let $N_d$
denote the number of the terms of $S$ which has order $d$.  Since
$A_i$ is zero-sum free, we infer that $A_i$ has at
most $d-1$ terms which have order $d$, and hence
$
N_d \le m(d-1) \le n(d-1)$. Thus it follows from Lemma \ref{5.8} that

\[
\begin{array}{ll}  \min \mathsf L (S) & \le -2\alpha+\frac{2p^{\alpha+1}}{p-1}+\sum_{i=1}^{\alpha}
\frac{2N_{p^i}}{p^i+1}+
3\sum_{i=1}^{\alpha}(p^i-1)\log\frac{p^i}{2}\\ & \le
-2\alpha+\frac{2p^{\alpha+1}}{p-1}+\sum_{i=1}^{\alpha}\frac{2n(p^i-1)}{p^i+1}+
3\sum_{i=1}^{\alpha}(p^i-1)\log\frac{p^i}{2} \\ & \le
-2\alpha+\frac{2p^{\alpha+1}}{p-1}+2\alpha
n+3\sum_{i=1}^{\alpha}(p^i-1)\log\frac{p^i}{2}. \end{array}
\]

\medskip
\noindent CASE 3: \, $n = p_1^{\alpha_1} \cdot \ldots \cdot p_r^{\alpha_r}$, where $r \ge 2$, $\alpha_1, \ldots, \alpha_r \in \N$, and  $p_1, \ldots, p_r \in \mathbb P$ are distinct.

For every divisor $d>1$ of $n$, let $N_d$ denote the number of the
terms of $S$ which have order $d$.  Since $A_i$ is
zero-sum free, we infer that $A_i$ has at most $d-1$
terms whose order divide $d$. Therefore,
$N_d\le m(d-1) \le n(d-1)$ for each $1<d|n$. Now the result
follows from Lemma \ref{5.13}.
\end{proof}

\bigskip
\noindent
{\bf Acknowledgement.} We thank the referees for reading the paper carefully. They have provided a list of helpful comments which, among others, led to a more general version of Lemma 4.3.

%%%%%%%%%%%%%%%%%%%%%%%%%%%%%%%%%%%%%%%%%%%%%%%%%%%%%%%%%%%%%%%%%%%%%%%%%
%% BIBLIOGRAPHY  %%%%%%%%%%%%%%%%%%%%%%%%%%%%%%%%%%%%%%%%%%%%%%%%%%%%%%%%
%%%%%%%%%%%%%%%%%%%%%%%%%%%%%%%%%%%%%%%%%%%%%%%%%%%%%%%%%%%%%%%%%%%%%%%%%

\providecommand{\bysame}{\leavevmode\hbox to3em{\hrulefill}\thinspace}
\providecommand{\MR}{\relax\ifhmode\unskip\space\fi MR }
% \MRhref is called by the amsart/book/proc definition of \MR.
\providecommand{\MRhref}[2]{%
  \href{http://www.ams.org/mathscinet-getitem?mr=#1}{#2}
}
\providecommand{\href}[2]{#2}

\end{document}